\renewcommand\eqref[1]{(\ref{#1})} 
\numberwithin{equation}{section}
\theoremstyle{plain}
\newtheorem{thm}{Theorem}[section]
\newtheorem{prop}[thm]{Proposition}
\theoremstyle{definition}
\newtheorem{defn}[thm]{Definition}
\newtheorem{rem}[thm]{Remark}
\newcommand{\inlinemaketitle}{{\let\newpage\relax\maketitle}}
\newcommand{\be}{\begin{equation}}
\newcommand{\ee}{\end{equation}}
\def\HS{{\mathtt{HS}}}
\def\op{{\mathtt{op}}}
\def\TT{{\mathbb T}}
\def\NN{{\mathbb N}}
\def\dualSU2{\frac12\NN_0}
\def\RR{{\mathbb R}}
\def\ZZ{{\mathbb Z}}
\def\C{{\mathbb C}}
\def\Gh{{\widehat{G}}}
\def\dpi{{d_\pi}}
\def\kpi{{k_\pi}}
\def\FT{{\mathscr F}}
\def\B{{\mathcal B}}
\def\M{\mathcal{M}}
\def\Dcal{{\mathcal D}}
\def\Lap{{\Delta}}
\def\Rcal{{\mathcal R}}
\def\p#1{{\left({#1}\right)}}
\def\jp#1{{\left\langle{#1}\right\rangle}}
\def\wt#1{{\widetilde{#1}}}
\def\SU2{{\rm SU(2)}}
\DeclareMathOperator{\Tr}{Tr}
\DeclareMathOperator{\rank}{rank}
\DeclareMathOperator{\card}{card}
\begin{document}
\title[Hardy-Littlewood and Hausdorff-Young-Paley inequalities]
{Hardy-Littlewood, Hausdorff-Young-Paley inequalities, and $L^p$-$L^q$ 
Fourier multipliers 
on compact homogeneous manifolds}

\author[Rauan Akylzhanov]{Rauan Akylzhanov}
\address{Rauan Akylzhanov:
  \endgraf
  Department of Mathematics
  \endgraf
  Imperial College London
  \endgraf
  180 Queen's Gate, London SW7 2AZ
  \endgraf
  United Kingdom
  \endgraf
  {\it E-mail address} {\rm r.akylzhanov14@imperial.ac.uk}
}
  
\author[Erlan Nursultanov]{Erlan Nursultanov}
\address{Erlan Nurlustanov:
\endgraf
Department of Mathematics
\endgraf
Moscow State University, Kazakh Branch
\endgraf
and
Gumilyov Eurasian National University,
\endgraf
Astana,
Kazakhstan
  \endgraf
  {\it E-mail address} {\rm er-nurs@yandex.ru}
}

\author[Michael Ruzhansky]{Michael Ruzhansky}
\address{
  Michael Ruzhansky:
  \endgraf
  Department of Mathematics
  \endgraf
  Imperial College London
  \endgraf
  180 Queen's Gate, London SW7 2AZ
  \endgraf
  United Kingdom
  \endgraf
  {\it E-mail address} {\rm m.ruzhansky@imperial.ac.uk}
  }

\thanks{The third
 author was supported in parts by the EPSRC Grant EP/K039407/1 and by the Leverhulme Grant RPG-2014-02.}
\date{\today}

\subjclass[2010]{Primary 35G10; 35L30; Secondary 46F05;}
\keywords{Hardy--Littlewood inequality, Paley inequality, Hausdorff-Young inequality, Lie groups, homogeneous manifolds,
Fourier multipliers,
Marcinkiewicz interpolation theorem}

\maketitle

\begin{abstract}
	In this paper we prove new inequalities describing the relationship between  
	the ``size'' of a function on a compact homogeneous manifold and the 
	``size'' of its Fourier coefficients. 
	These inequalities can be viewed as noncommutative 
	versions of the Hardy-Littlewood inequalities obtained by Hardy and Littlewood \cite{HL}
	on the circle. For the example case of the group SU(2) we show that the obtained
	Hardy-Littlewood inequalities are sharp, yielding a criterion for a function to be in 
	$L^p(\SU2)$ in terms of its Fourier
	coefficients. We also establish Paley and Hausdorff-Young-Paley inequalities on general
	compact homogeneous manifolds. The latter is applied to obtain conditions for the $L^p$-$L^q$
	boundedness of Fourier multipliers for $1<p\leq 2\leq q<\infty$ on compact homogeneous manifolds
	as well as the $L^p$-$L^q$ boundedness of general (non-invariant) operators on compact Lie groups.
	We also record an abstract version of the Marcinkiewicz interpolation 
	theorem on totally ordered discrete sets, to be used in the proofs with different 
	Plancherel measures on the unitary duals. 
\end{abstract}


\section{Introduction}
A fundamental problem in Fourier analysis is that of investigating the relationship between 
the  ``size'' of a function and the ``size'' of its Fourier transform. 

The aim of this paper is to give necessary conditions and sufficient conditions for the 
$L^p$-integrability of a function on an arbitrary compact homogeneous space $G/K$  by means of its Fourier coefficients. 
The obtained inequalities provide a noncommutative version of known results of this type on the circle 
$\TT$ and the real line $\RR$.

To explain this briefly, we recall that in \cite{HL}, Hardy and Littlewood have shown
that for $1<p\leq 2$ and $f\in L^p(\TT)$, the following inequality holds true:
\begin{equation}
\label{H_L_inequality-0}
	\sum_{m \in \ZZ}{(1+|m|)}^{p-2}|\widehat{f}(m)|^{p} \leq C\|f\|^p_{L^p(\TT)},
\end{equation}
arguing this to be a suitable extension of the Plancherel identity to $L^p$-spaces.
Hewitt and Ross \cite{HR} generalised this to the setting of compact abelian groups.
While we refer to Section \ref{SEC:main_results} and particularly to
Theorem \ref{Hardy_Littlewood} for more details on this, to give a flavour of our results,
our analogue for this on compact homogeneous manifolds $G/K$ of dimension $n=\dim G/K$ is
the inequality 
\begin{equation}
\label{sufficient_SU-2-0}
	\sum_{\pi\in\Gh_0} 
	\dpi 
	k^{p(\frac1p-\frac12)}_{\pi}
	\jp\pi^{n(p-2)}\|\widehat{f}(\pi)\|^p_{\HS}
\leq C
\|f\|^p_{L^p(G/K)},\quad 1<p\leq 2,
\end{equation}
which for $p=2$ gives the ordinary Plancherel identity on $G/K$,
see \eqref{plancherel}. Briefly, here $\Gh_0$ stands for class I representations of a compact Lie group $G$ with respect
to the subgroup $K$,
$\widehat{f}(\pi)\in\mathbb C^{d_\pi\times d_\pi}$ is the Fourier coefficient of $f$ at the representation $\pi$ of
degree $d_\pi$, $k_\pi$ is the number of invariant vectors of the representation $\pi$ with respect to $K$, and
$\jp\pi$ are the eigenvalues of the operator $(I-\Delta_{G/K})^{1/2}$ corresponding to $\pi$ 
for a Laplacian $\Delta_{G/K}$ on the compact
homogeneous space $G/K$.
We refer to
Theorem \ref{HL-groups-1} for this statement and to Section
\ref{SEC:notation-HL} for precise definitions.

In particular, in this paper we establish the following results, that we now  summarise and briefly discuss:
\begin{itemize}
\item {\em Hardy-Littlewood inequality}: The Hardy-Littlewood type inequality \eqref{sufficient_SU-2-0} holds on arbitrary compact homogeneous manifolds.
In particular, we can also rewrite it as
\begin{equation}
\label{sufficient_SU-2-2}
	\sum_{\pi\in\Gh_0} 
	\dpi 
	k_{\pi}
	\jp\pi^{n(p-2)}\left(\frac{\|\widehat{f}(\pi)\|_{\HS}}{\sqrt{k_{\pi}}}\right)^p
\leq C
\|f\|^p_{L^p(G/K)},\quad 1<p\leq 2,
\end{equation}
interpreting 
\begin{equation}\label{EQ:Plm}
\mu(Q)=\sum\limits_{\substack{\pi\in Q}}\dpi\kpi
\end{equation}
as the Plancherel measure on the set $\Gh_0$,
the `unitary dual' of the homogeneous manifold $G/K$, and $k_\pi$ the maximal rank of Fourier
coefficients matrices $\widehat{f}(\pi)$, so that e.g.
$\|\widehat{\delta}(\pi)\|_{\HS}=\sqrt{k_{\pi}}$ for the delta-function $\delta$ on $G/K$ and $\pi\in\Gh_0$. 

Using the Hilbert-Schmidt norms of Fourier coefficients in  \eqref{sufficient_SU-2-0}
rather than Schatten norms (leading to a different
version of $\ell^p$-spaces on the unitary dual) leads to the sharper estimate -- this is shown in
\eqref{EQ:est1} and \eqref{H_L_inequality-alt-G3}.

\item {\em Differential/Sobolev space interpretations}:
The exact form of \eqref{sufficient_SU-2-0} or \eqref{sufficient_SU-2-2} is justified in 
Section \ref{SEC:notation-HL} by comparing the differential interpretations 
\eqref{H_L_inequality-alt} and \eqref{H_L_inequality-alt-G0} 
of the classical Hardy-Littlewood inequality 
\eqref{H_L_inequality-0} and of \eqref{sufficient_SU-2-0}, respectively.
In fact, it is exactly from these differential interpretations is how
we arrive at the desired expression in \eqref{sufficient_SU-2-0}. Roughly, both are saying that
for $1<p\leq 2$, 
\begin{equation}\label{EQ:Sobolev}
g\in L^p_{2n(\frac1p-\frac12)}(G/K) \Longrightarrow \widehat{g}\in\ell^p(\Gh_0)
\end{equation}
with the corresponding norm estimate 
$\|\widehat{g}\|_{\ell^p(\Gh_0)}\leq C\|g\|_{L^p_{2n(\frac1p-\frac12)}(G/K)}$,
where $L^p_{2n(\frac1p-\frac12)}$ is the Sobolev space over $L^p$ of order $2n(\frac1p-\frac12)$,
and $\ell^p(\Gh_0)$ is an appropriately defined Lebesgue space $\ell^p$ on the unitary dual
$\Gh_0$ of representations relevant to $G/K$, with respect to the corresponding Plancherel measure.
In particular, as a special case we have the original Hardy-Littlewood inequality \eqref{H_L_inequality-0},
which can be reformulated as
$$
g\in L^p_{2(\frac1p-\frac12)}(\TT) \Longrightarrow \widehat{g}\in\ell^p(\ZZ),\quad 1<p\leq 2,
$$
see \eqref{H_L_inequality-alt}, since $\ell^p(\widehat{\TT}_0)\simeq \ell^p(\ZZ)$, and the Plancherel measure
is the counting measure on $\ZZ$ in this case.

\item {\em Duality}: By duality, the inequality \eqref{sufficient_SU-2-0}
remains true (with the reversed inequality) also for $2\leq p<\infty$.

\item {\em Sharpness}: The inequality \eqref{sufficient_SU-2-0} is sharp in the following sense: 
if the Fourier coefficients are positive and monotone (in a suitable sense), and a certain non-oscillation
condition holds, the inequality in \eqref{sufficient_SU-2-0} becomes an equivalence. In the case of
the circle $G=\mathbb T$, this was shown by Hardy and Littlewood 
(see Theorem \ref{THM:HL-criterion}) -- here, positivity and monotonicity
are understood classically, and the oscillation condition is automatically satisfied (see Remark \ref{REM:osc}). 
While we conjecture this equivalence to be
true for general compact homogeneous manifolds, we make this precise in the example of the
group $G=\SU2$.

\item {\em Paley inequality}: We propose \eqref{EQ:Paley0} as a Paley-type inequality that holds on general 
compact homogeneous manifolds. 
On one hand, our inequality \eqref{EQ:Paley0} extends H\"ormander's Paley inequality on $\RR^n$.
On the other hand, combined with the Weyl asymptotic formula for the eigenvalue counting function of elliptic
differential operators on the compact manifold $G/K$, it implies the
Hardy-Littlewood inequality \eqref{sufficient_SU-2-0} as a special case (and this is how we prove it too).

\item {\em Hausdorff-Young-Paley inequality}:
The Paley inequality \eqref{EQ:Paley0} and the Haus\-dorff-Young inequalities on $G/K$ in a suitable
scale of spaces $\ell^p(\Gh_0)$ on the unitary dual of $G/K$ imply the Hausdorff-Young-Paley inequality.
This is given in Theorem \ref{Cor:general_Paley_inequality}. 

\item {\em $L^p$-$L^q$ Fourier multipliers.}
The established Hausdorff-Young-Paley inequality becomes instrumental in obtaining $L^p$-$L^q$ Fourier multiplier
theorems on $G/K$ for indices $1<p\leq 2\leq q<2$. 
In Section \ref{SEC:multipliers} we give such results for Fourier multipliers on $G/K$:
for a Fourier multiplier $A$ 
acting by $\widehat{Af}(\pi)=\sigma_A(\pi)\widehat{f}(\pi)$
and $1<p\leq 2\leq q<2$ we have
$$
\|A\|_{L^p(G/K)\to L^q(G/K)}
\lesssim
\sup_{s>0} \left\{ s \mu(\pi\in\Gh_0: \, \|\sigma_A(\pi)\|_{\op}>s)^{\frac1p-\frac1q}\right\},
$$
where $\mu$ is the Plancherel measure as in \eqref{EQ:Plm}, 
see Theorem \ref{multiplier_upper_bound}.
Consequently, in Theorem \ref{THM:Lpq-G} we also give a general 
$L^p(G)$-$L^q(G)$ boundedness result for general (not necessarily invariant) operators $A$ on a compact Lie group
$G$ in terms of their matrix symbols $\sigma_A(x,\xi)$.
\end{itemize}

We now discuss some of these results, their relevance, and motivation behind them in more detail.

\medskip
In \cite{HL}, Hardy and Littlewood established the necessary condition for $f$ to be in $L^p(\TT)$ in terms of its Fourier 
coefficients for $1<p\leq 2$, and by duality the sufficient conditions for $f$ to be in $L^p(\TT)$ for $2\leq p<\infty$
(we recall these statements in Theorem \ref{Hardy_Littlewood}). 
We discuss how to extend these results to the noncommutative
setting of general compact homogeneous manifolds. This is done in Section \ref{SEC:notation-HL}
and in Theorem \ref{HL-groups-1}.

\medskip
On the circle, Hardy and Littlewood have shown that 
for $1<p<\infty$, if the Fourier coefficients $\widehat{f}(m)$ are monotone, then one also has the converse to
\eqref{H_L_inequality-0}, namely, 
\begin{equation}
\label{H-L-equivalence}
f\in L^p(\TT)
	\quad\textrm{ if and only if }\quad
	\sum\limits_{m\in \ZZ}(1+|m|)^{p-2}|\widehat{f}(m)|^p<\infty.
\end{equation}
To show that our Hardy-Littlewood inequalities in Theorem \ref{HL-groups-1} are sharp,
in Section \ref{SEC:criterion} we introduce the notion of `monotonicity' for sequences of matrix Fourier coefficients for
functions on $\SU2$, and in
Theorem \ref{THM:integrability-criterion-1-p-2-SU2}
we show that for $\frac32< p \leq 2$ and $G=\SU2$ the Hardy-Littlewood 
inequalities in Theorem \ref{HL-groups-1} can be also strengthened to provide a criterion:
if the Fourier coefficients of a central function $f\in L^{3/2}(\SU2)$ are `general monotone' and a
certain (natural) non-oscillation condition is satisfied, then
\begin{equation}
\label{H-L-equivalence-2}
f\in L^p(\SU2)
\quad\textrm{ if and only if }\quad
	\sum\limits_{\substack{l\in\frac12\NN_0}}
	(2l+1)^{\frac{5p}{2}-4}
	\|\widehat{f}(l)\|^p_{\HS}	
	<\infty.
\end{equation}
The equivalence in \eqref{H-L-equivalence-2} can be thought of as the analogue of 
\eqref{H-L-equivalence} on the circle: indeed,
on the circle, the mentioned non-oscillation condition is automatically satisfied, all
functions are central, and the power
$\frac{5p}{2}-4$ in \eqref{H-L-equivalence-2}
has a natural interpretation (in particular, for $p=2$, it boils down to the Plancherel
formula on SU(2), see \eqref{EQ:Planch-SU2}).

The restriction on $p$ to satisfy $\frac32<p<\frac52$ in Theorem \ref{THM:integrability-criterion-1-p-2-SU2} 
(and above in \eqref{H-L-equivalence-2}, but we are interested in $p\leq 2$ since $p>2$ will be covered by the dual
part of the Hardy-Littlewood inequality) 
is a particular instance of the fact that on compact simply connected semisimple Lie groups, the polyhedral Fourier partial sums of (a central function) $f$
converge to $f$ in $L^p$ if and only if $2-\frac1{s+1}<p<2+\frac1s$. Here the number $s$ depends on the root system
$\Rcal$ of the compact Lie group $G$ (see 
Stanton \cite{Sta1976}, Stanton and Tomas \cite{Stanton-Tomas:BAMS-1976}, and 
Colzani, Giulini and Travaglini \cite{CGT1989} for the only if statement), see Appendix \ref{APP:polyhedral} for precise
definitions and review.
It can be shown that for $G=\TT$ and $G=\SU2$, we have $s=0$ and $s=1$ respectively. 
Thus, Theorem \ref{THM:integrability-criterion-1-p-2-SU2} can be considered as a natural counterpart on $\SU2$ to the
criterion \eqref{H-L-equivalence} of Hardy and Littlewood on the circle.
In order to prove the above statements, we need to develop several things which are of interest on their own:
\begin{itemize}
\item In Proposition \ref{PROP:DK_estimate} we prove an estimate for the Dirichlet kernel on the group $\SU2$.
This estimate appears to be sharp because its application yields a sharp criterion for the $L^p$-integrability 
of functions on $\SU2$ in Theorem \ref{THM:integrability-criterion-1-p-2-SU2}.
\item In Appendix \ref{SEC:Marc_Interpol_Theorem}, we establish an abstract version of the Marcinkiewicz 
interpolation theorem on totally ordered discrete sets. Consequently, it is applied in proofs in the paper 
for different choices of the measure on the discrete unitary dual $\Gh$ and on the discrete set 
$\Gh_0\subset\Gh$
of class I representations of $G$.
\end{itemize}

In Section \ref{SEC:PHYP} we establish Paley-type inequalities on compact homogeneous manifolds.
Recall briefly that in \cite{Hormander:invariant-LP-Acta-1960} Lars H\"ormander has shown that 
if a positive function $\varphi\geq 0$ satisfies
\begin{equation}
\label{EQ:Horm}
	|\{\xi\in\RR^n\colon \varphi(\xi)\geq t\}|\leq\frac{C}{t}\quad\textrm{ for } t>0,
\end{equation}
then 
\begin{equation}\label{EQ:Paley-Rn}
\left(\,\,
\int\limits_{\RR^n}
\left|\widehat{u}\right|^p
\varphi^{2-p}\,d\xi
\right)^{\frac1p}
\lesssim
\|u\|_{L^p(\RR^n)},\quad 1 < p\leq 2.
\end{equation}
We note that condition \eqref{EQ:Horm} is equivalent to 
$$
M_{\varphi}:=\sup_{\substack{t>0}}	t|\{\xi\in\RR^n\colon \varphi(\xi)\geq t\}|<\infty.
$$
Our analogue for this is the inequality
\begin{equation}\label{EQ:Paley0}
\left(
	\sum\limits_{\pi\in\Gh_0}\dpi k_\pi^{p(\frac1p-\frac12)}\|\widehat{f}(\pi)\|^p_{\HS}
\ {\varphi(\pi)}^{2-p}
\right)^{\frac1p}
\lesssim
M_{\varphi}^{\frac{2-p}{p}}
\|f\|_{L^p(G/K)}, \quad 1<p\leq 2,
\end{equation}
where $\varphi(\pi)$ is a positive sequence over $\Gh_0$ such that
$$
M_{\varphi}
:=
\sup_{\substack{t>0}}t\sum\limits_{\substack{\pi\in\Gh_0\\ \varphi(\pi)\geq t}}\dpi\kpi
<\infty.
$$
Here, as well as in other results of this paper, the measure $\mu(Q)=\sum\limits_{\substack{\pi\in Q}}\dpi\kpi$
appears as an analogue of the Plancherel measure on sets $Q\subset \Gh_0$.

The sum over an empty set in the definition of $M_\varphi$ is assumed to be zero.
With $\varphi(\pi)={\jp\pi}^{-n}$, using the asymptotic formula 
for the Weyl eigenvalue counting function for the Laplacian on $G/K$ to show that $M_\varphi<\infty$,
inequality \eqref{EQ:Paley0} gives
inequality \eqref{sufficient_SU-2-0}. In this sense, the Paley inequality \eqref{EQ:Paley0} is an
extension of one of the Hardy-Littlewood inequalities.

We prove such Paley-type inequality in Theorem \ref{THM:Paley_inequality}.
Consequently, we can use the weighted interpolation between the Paley inequality and 
a suitable version of the noncommutative Hausdorff-Young inequality \eqref{H-Y} on the
homogeneous manifolds. This yields what we then call the Hausdorff-Young-Paley inequality 
in Theorem \ref{Cor:general_Paley_inequality}. This
inequality is very useful for obtaining the $L^p$-$L^q$ multiplier theorems for Fourier multipliers
on compact Lie groups and compact homogeneous spaces. This application 
is given in Section \ref{SEC:multipliers}
to provide conditions for the $L^p$-$L^q$ boundedness of Fourier multipliers 
for $p\leq q$. A special case on $\SU2$ has been done by the authors in \cite{HLP}.
For $p=q$, the Fourier multipliers have been analysed in
\cite{RuWi2013}, with the H\"ormander-Mikhlin theorem on general compact Lie groups
established in \cite{Ruzhansky-Wirth-multipliers}, extending the results for Fourier multipliers 
on $\SU2$ by Coifman-de Guzman \cite{Coifman-deGuzman:SU2-Argentina-1970}
and Coifman and Weiss \cite{Coifman-Weiss:SU2-Argentina-1970,coifman+weiss_lnm},
to the general setting of compact Lie groups.

The paper is organised as follows. In Section \ref{SEC:main_results} we fix the notation for 
the representation theory of compact Lie groups and  formulate estimates relating functions to the behaviour of their Fourier
coefficients: the version of the Hardy--Littlewood inequalities on arbitrary compact homogeneous manifold  $G/K$ and further
extensions.  
In Section \ref{SEC:criterion} we give a criterion for the $p^{th}$ power integrability of a function on $\SU2$ in terms of its Fourier coefficients.
In Section \ref{SEC:multipliers} we obtain $L^p$-$L^q$ Fourier multiplier theorem on $G/K$ and the 
$L^p$-$L^q$ boundedness theorem for general operators on $G$.
In Section \ref{SEC:proofs} we complete the proofs of the results presented in previous sections.
In  Section \ref{SEC:DK_estimate} we give an interesting estimate for the Dirichlet kernel on $\SU2$
which is instrumental in the proof of the inverse to the Hardy-Littlewood inequality on the case of the group being $\SU2$.
In Appendix \ref{APP:polyhedral} we briefly review the topic of polyhedral sums for Fourier series.
In Appendix \ref{SEC:Marc_Interpol_Theorem} we discuss a matrix-valued version of the Marcinkiewicz interpolation 
theorem that will be instrumental for our proofs.

Main inequalities in this paper are established on general compact homogeneous manifolds of the form $G/K$,
where $G$ is a compact Lie group and $K$ is a compact subgroup.
Important examples are compact Lie groups themselves when we take the trivial subgroup $K=\{e\}$
in which case $\kpi=\dpi$,
or spaces like spheres ${\mathbb S}^{n}={\rm SO}(n+1)/{\rm SO}(n)$
or complex spheres (projective spaces) $\mathbb C{\mathbb S}^{n}={\rm SU}(n+1)/{\rm SU}(n)$
in which cases the subgroups are massive and so $\kpi=1$ for all $\pi\in\Gh_{0}.$
We briefly describe such spaces and their representation theory in Section \ref{SEC:notation-HL}.
When we want to show the sharpness of the obtained inequalities, we may restrict to the
case of semisimple
Lie groups $G$. As another special case, we consider the group $\SU2$, in which case 
in Theorem \ref{THM:integrability-criterion-1-p-2-SU2} we obtain 
an analogue of the Hardy-Littlewood criterion for integrability of functions in $L^p(\SU2)$
in terms of their Fourier coefficients. This provides the converse to Hardy-Littlewood
inequalities on $\SU2$ previously obtained by the authors in \cite{HLP}.

We shall use the symbol $C$ to denote various positive constants, and $C_{p,q}$ for constants
which may depend only on indices $p$ and $q$.
We shall write $x\lesssim y$  for the relation $|x|\leq C |y|$, and write $x\cong y$ if $x\lesssim y$ and $y\lesssim x$.
\section{Main results}
\label{SEC:main_results}

In this section we introduce the necessary notation and formulate main results of the paper.
Along the exposition, we provide references to the relevant literature.

\subsection{Notation and Hardy-Littlewood inequalities}
\label{SEC:notation-HL}

In \cite[Theorems 10 and 11]{HL}, 
Hardy and Littlewood proved the following generalisation of the Plancherel's identity
on the circle $\TT$. 

\begin{thm}[Hardy--Littlewood \cite{HL}] 
\label{Hardy_Littlewood}
The following holds.
\begin{enumerate}
\item Let $1<p\leq 2$. If $f\in L^p(\TT)$, then
\begin{equation}
\label{H_L_inequality}
	\sum_{m \in \ZZ}{(1+|m|)}^{p-2}|\widehat{f}(m)|^{p} \leq C_p\|f\|^p_{L^p(\TT)},
\end{equation}
where $C_p$ is a constant which depends only on $p$.

\item  Let $2\leq p<\infty$.  
If $\{\widehat{f}(m)\}_{m\in\ZZ}$ is a sequence of complex numbers such that 
\begin{equation}
\label{H_L_condition}
\displaystyle\sum_{m\in\ZZ} (1+|m|)^{p-2}|\widehat{f}(m)|^p<\infty,
\end{equation}
then there is a function $f\in L^p(\TT)$ with Fourier coefficients given by $\widehat{f}(m)$, and 
$$
	\|f\|^p_{L^p(\TT)}\leq C_{p}^{\prime}\sum_{m\in\ZZ} (1+|m|)^{p-2}|\widehat{f}(m)|^p.
$$
\end{enumerate}
\end{thm}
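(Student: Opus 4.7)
The overall strategy is to treat Part (1) as a corollary of a Paley-type inequality on $\TT$ and to obtain Part (2) from Part (1) by duality. The theorem is the classical statement of Hardy and Littlewood, and the route below is essentially the one that will be generalised later in the paper to the compact homogeneous setting.

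For Part (1), the plan is to view the Fourier transform $T:f\mapsto\widehat{f}$ as a sublinear operator from $L^p(\TT)$ to a weighted sequence space and apply Marcinkiewicz interpolation. I would first establish the Paley-type inequality
\[
\left(\sum_{m\in\ZZ}|\widehat{f}(m)|^p\,\varphi(m)^{2-p}\right)^{1/p}
\lesssim \|f\|_{L^p(\TT)},\qquad 1<p\leq 2,
\]
for any positive sequence $\varphi$ satisfying $M_\varphi:=\sup_{t>0}t\cdot\#\{m:\varphi(m)\geq t\}<\infty$. This is proved by interpolation between the two endpoints $p=2$ and (a weak form of) $p=1$: at $p=2$ Plancherel gives $T:L^2(\TT)\to\ell^2(\ZZ)$ with the weight $\varphi^{2-p}|_{p=2}\equiv1$, while at $p=1$ one uses $|\widehat{f}(m)|\leq\|f\|_{L^1}$ together with the distribution-function bound provided by $M_\varphi<\infty$ to produce a suitable weak-type estimate on the weighted $\ell^1$-side. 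Marcinkiewicz interpolation between these two endpoints (with respect to the discrete measure $d\mu(m)=\varphi(m)^2$) then yields the strong-type Paley inequality for $1<p<2$. Finally, applying the Paley inequality with the choice $\varphi(m)=(1+|m|)^{-1}$, for which $\#\{m:\varphi(m)\geq t\}\leq2/t$ and hence $M_\varphi\leq2$, gives $\varphi(m)^{2-p}=(1+|m|)^{p-2}$ and therefore \eqref{H_L_inequality}.

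For Part (2), I would argue by duality using the pairing $\langle a,b\rangle=\sum_m a_m\overline{b_m}$. An elementary H\"older computation shows that the weighted spaces $\ell^p(\ZZ;(1+|m|)^{p-2})$ and $\ell^{p'}(\ZZ;(1+|m|)^{p'-2})$ are dual under this pairing, because $(2-p)p'/p=p'-2$. Given a sequence $\{a_m\}$ satisfying \eqref{H_L_condition} with $p\geq2$, I would define the linear functional
\[
\Lambda(g)=\sum_{m\in\ZZ}a_m\,\overline{\widehat{g}(m)}
\]
on trigonometric polynomials $g$, and use Part~(1) at the dual exponent $p'\in(1,2]$ together with H\"older's inequality to obtain $|\Lambda(g)|\leq C\,\|a\|_{\ell^p((1+|m|)^{p-2})}\,\|g\|_{L^{p'}(\TT)}$. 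Since trigonometric polynomials are dense in $L^{p'}(\TT)$, $\Lambda$ extends to a bounded functional on $L^{p'}(\TT)$, and by the Riesz representation theorem there exists $f\in L^p(\TT)=(L^{p'}(\TT))^*$ representing $\Lambda$. Testing $\Lambda$ on $g(x)=e^{imx}$ recovers $\widehat{f}(m)=a_m$, and the norm bound on $\Lambda$ translates into the desired norm bound on $f$.

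The main obstacle is the Paley inequality at the heart of Part (1): the weak-type endpoint at $p=1$ must be set up so that the weighted counting measure $\varphi^2$ interacts correctly with the distributional hypothesis $M_\varphi<\infty$, which is the only nontrivial analytic input. Once Paley is in hand, specialising $\varphi$ and running the duality machine are essentially bookkeeping. An extra cosmetic point worth checking is that the weights $(1+|m|)^{p-2}$ and $(1+|m|)^{p'-2}$ pair up exactly as claimed, so that no constants beyond $C_p$ sneak into the final duality estimate.
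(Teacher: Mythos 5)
The paper cites Theorem \ref{Hardy_Littlewood} from Hardy--Littlewood \cite{HL} without giving a proof, so there is no in-paper argument for this exact statement to compare against; however, your proposal is correct and is precisely the $\TT$-specialisation of the route the paper uses for its generalisation Theorem \ref{HL-groups-1}: establish a Paley inequality (here the analogue of Theorem \ref{THM:Paley_inequality}) by Marcinkiewicz interpolation between the Plancherel $(2,2)$-endpoint and a weak $(1,1)$-endpoint driven by $M_\varphi<\infty$, then specialise $\varphi(m)=(1+|m|)^{-1}$, and obtain Part~(2) by duality. All the details you flag check out --- $\#\{m:(1+|m|)^{-1}\geq t\}\leq 2/t$ so $M_\varphi\leq 2$, the exponent identity $(2-p)p'/p=(2-p)/(p-1)=p'-2$, and the Riesz representation step --- so the argument is sound and consistent with the paper's methodology.
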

Hewitt and Ross \cite{HR} generalised this theorem to the setting of compact abelian groups.
We note that if $\Delta=\partial_x^2$ is the Laplacian on $\TT$, and $\FT_\TT$ is the
Fourier transform on $\TT$, the Hardy-Littlewood inequality
\eqref{H_L_inequality} can be reformulated as
\begin{equation}
\label{H_L_inequality-alt0}
\| \FT_\TT\p{(1-\Delta)^{\frac{p-2}{2p}} f}\|_{\ell^p(\ZZ)} \leq 
   C_p\|f\|_{L^p(\TT)}.
\end{equation}
Denoting $(1-\Delta)^{\frac{p-2}{2p}} f$ by $f$ again, this becomes also equivalent to the estimate
\begin{equation}
\label{H_L_inequality-alt}
\|\widehat{f}\|_{\ell^p(\ZZ)} \leq 
   C_p\|   (1-\Delta)^{-\frac{p-2}{2p}} f\|_{L^p(\TT)}\equiv
   C_p\|   (1-\Delta)^{\frac1p-\frac12} f\|_{L^p(\TT)}, \; 1<p\leq 2.
\end{equation}
The first purpose of this section is to argue what could be a noncommutative version of these
estimates and then to establish an analogue of Theorem \ref{Hardy_Littlewood} in the setting of compact homogeneous manifolds.
To motivate the formulation, we start with a compact Lie group $G$. 
Identifying a representation $\pi$ with its equivalence class and choosing some
bases in the representation spaces, we can think of $\pi\in\Gh$ as a unitary matrix-valued
mapping $\pi:G\to\C^{\dpi\times\dpi}$. For $f\in L^{1}(G)$, we define its
Fourier transform at $\pi\in\Gh$ by
$$
(\FT_{G} f)(\pi)\equiv \widehat{f}(\pi):=\int_{G} f(u)\pi(u)^{*} du,
$$
where $du$ is the normalised Haar measure on $G$.
This definition can be extended to distributions $f\in {\mathcal D}'(G)$, and
the Fourier series takes the form
\begin{equation}\label{EQ:FS}
f(u)=\sum_{\pi\in\Gh} \dpi \Tr\p{\pi(u) \widehat{f}(\pi)}.
\end{equation}
The Plancherel identity on $G$ is given by
\begin{equation}
\label{plancherel}
\|f\|^{2}_{L^{2}(G)}=\sum_{\pi\in\Gh} \dpi \|\widehat{f}(\pi)\|_{\HS}^{2}=: 
\|\widehat{f}\|_{\ell^{2}(\Gh)}^{2},
\end{equation}
yielding the Hilbert space $\ell^{2}(\Gh)$.
Thus, Fourier coefficients of functions and distributions on $G$ take values in the space
\begin{equation}\label{EQ:Sigma}
\Sigma=\left\{ \sigma=(\sigma(\pi))_{\pi\in\Gh}: \sigma(\pi)\in \C^{\dpi\times\dpi} \right\}.
\end{equation}

The $\ell^p$-spaces on the unitary dual of a compact Lie group can be defined,
for example, motivated
by the Hausdorff--Young inequality in the form
\begin{equation}\label{EQ:HY}
\p{\sum_{\pi\in\Gh} \dpi \|\widehat{f}(\pi)\|_{S^{p'}}^{p'}}^{1/p'} \leq \|f\|_{L^p(G)}
\textrm{ for } 1<p\leq 2,
\end{equation}
with an obvious modification for $p=1$, with $\frac1p+\frac{1}{p'}=1$, and where
$S^{p'}$ is the $p'$-Schatten class on the space of matrices $\C^{\dpi\times\dpi}$.
For the inequality \eqref{EQ:HY} see \cite{Kunze:FT-TAMS-1958}.
Thus, for any $1\leq p<\infty$ we can define the 
(Schatten-based)
spaces $\ell^{p}_{sch}(\Gh)\subset\Sigma$ 
by the norm
\begin{equation}\label{EQ:Lpsch}
\|\sigma\|_{\ell^p_{sch}(\Gh)}:=\p{ \sum_{\pi\in\Gh} \dpi \|\sigma(\pi)\|_{S^p}^p}^{1/p},
\; \sigma\in\Sigma.
\end{equation}
The Hausdorff-Young inequality \eqref{EQ:HY} can be then reformulated as
$$
\|\widehat{f}\|_{\ell^{p'}_{sch}(\Gh)} \leq  \|f\|_{L^p(G)}
\textrm{ for } 1<p\leq 2.
$$
We refer to Hewitt and Ross \cite[Section 31]{Hewitt-Ross:BK-Vol-II} or to Edwards
\cite[Section 2.14]{Edwards:BK} for a thorough analysis of these spaces.

At the same time, another scale of $\ell^p$-spaces on the unitary dual $\Gh$ has been 
developed in \cite{RT} based on fixing the Hilbert-Schmidt norms, and this scale will
actually provide sharper results in our problem. 
In view of subsequently established converse estimates using the same expressions,
it appears that this scale of spaces is the correct one for extending
the Hardy-Littlewood inequalities to the noncommutative setting.
Thus, for 
$1\leq p<\infty$, we define the space
$\ell^p(\Gh)$ by the norm
\begin{equation}\label{EQ:Lp}
\|\sigma\|_{\ell^p(\Gh)}:=\p{ \sum_{\pi\in\Gh} d_\pi^{p\p{\frac2p-\frac12}} \|\sigma(\pi)\|_{\HS}^p}^{1/p},
\; \sigma\in\Sigma, \; 1\leq p<\infty,
\end{equation}
where $\|\cdot\|_{\HS}$ denotes the Hilbert-Schmidt matrix norm i.e.
$$
\|\sigma(\pi)\|_{\HS}:=\left(\Tr(\sigma(\pi)\sigma(\pi)^*)\right)^{\frac12}.
$$
It was shown in \cite[Section 10.3]{RT} that, among other things, these are interpolation spaces,
and that the Fourier transform $\FT_{G}$ and its inverse $\FT_{G}^{-1}$ satisfy the
Hausdorff-Young inequalities in these spaces.

The power of $\dpi$ in \eqref{EQ:Lp} can be naturally interpreted if we rewrite it in the form
\begin{equation}\label{EQ:Lp-2}
\|\sigma\|_{\ell^p(\Gh)}:=\p{ \sum_{\pi\in\Gh} d_\pi^2 \left(\frac{\|\sigma(\pi)\|_{\HS}}
{\sqrt{\dpi}}\right)^p}^{1/p},
\; \sigma\in\Sigma, \; 1\leq p<\infty,
\end{equation}
and think of $\mu(Q)=\sum_{\pi\in Q} d_\pi^2$ as the Plancherel measure on $\Gh$, and of
$\sqrt{\dpi}$ as the normalisation for matrices $ \sigma(\pi)\in \C^{\dpi\times\dpi}$, in view of
$\|I_\dpi\|_\HS=\sqrt{\dpi}$ for the identity matrix $I_\dpi\in \C^{\dpi\times\dpi}$.

We note that for a matrix $\sigma(\pi)\in \C^{\dpi\times\dpi}$, for $1\leq p\leq 2$, by
H\"older inequality we have
$$
\|\sigma(\pi)\|_{S^p}\leq d_\pi^{\frac1p-\frac{1}{2}} \|\sigma(\pi)\|_\HS.
$$
Consequently, for $1\leq p\leq 2$, one can show the embedding 
$\ell^p(\Gh)\subset \ell^p_{sch}(\Gh)$, with the inequality
\begin{equation}\label{EQ:norms}
\|\sigma\|_{\ell^p_{sch}(\Gh)}\leq \|\sigma\|_{\ell^p(\Gh)},\; \forall \sigma\in\Sigma, \; 1\leq p\leq 2.
\end{equation}

We now describe the setting of Fourier coefficients on a compact homogeneous
manifold $M$ following 
\cite{Dasgupta-Ruzhansky:Gevrey-BSM} or \cite{NRT2014},
and referring for further details with
proofs to Vilenkin \cite{Vilenkin:BK-eng-1968} or to
Vilenkin and Klimyk \cite{VK1991}.

Let $G$ be a compact motion group of $M$ and let $K$ be the stationary
subgroup of some point.
Alternatively, we can start with a compact Lie group $G$ with
a closed subgroup $K$, and identify $M=G/K$ as an analytic manifold in a canonical way.
We normalise measures so
that the measure on $K$ is a probability one.
Typical examples are the spheres ${\mathbb S}^{n}={\rm SO}(n+1)/{\rm SO}(n)$
or complex spheres $\mathbb C{\mathbb S}^{n}={\rm SU}(n+1)/{\rm SU}(n)$.

Let us denote by $\Gh_{0}$ the subset of $\Gh$ of representations that are
class I with respect to the subgroup $K$. This means that 
$\pi\in\Gh_{0}$ if $\pi$ has at least one non-zero invariant vector $a$ with respect
to $K$, i.e. that
$$\pi(h)a=a \; \textrm{ for all } \;h\in K.$$ 
Let $\B_{\pi}$ denote the space of these invariant vectors and let  
$$\kpi:=\dim\B_{\pi}.$$
Let us fix an orthonormal basis in the representation space of $\pi$ so that
its first $\kpi$ vectors are the basis of $B_{\pi}.$
The matrix elements $\pi(x)_{ij}$, $1\leq j\leq \kpi$,
are invariant under the right shifts by $K$.

We note that if $K=\{e\}$ so that $M=G/K=G$ is the Lie group, we have
$\Gh=\Gh_{0}$ and $\kpi=\dpi$ for all $\pi$. As the other extreme, if
$K$ is a massive subgroup of $G$, i.e., if for every such $\pi$ there is precisely one invariant vector
with respect to $K$, we have $k_{\pi}=1$ for all $\pi\in\Gh_{0}.$
This is, for example, the case for the spheres $M={\mathbb S}^{n}.$
Other examples can be found in Vilenkin \cite{Vilenkin:BK-eng-1968}.

We can now identify functions on $M=G/K$ with functions on 
$G$ which are constant on left cosets with respect to $K$. 
Then, for a function $f\in C^{\infty}(M)$ we can
recover it by the Fourier series of its canonical
lifting $\wt{f}(g):=f(gK)$ to $G$,
$\wt{f}\in C^{\infty}(G)$, and the Fourier
coefficients satisfy  $\widehat{\wt{f}}(\pi)=0$ for all representations
with $\pi\not\in\Gh_{0}$.
Also, for class I representations $\pi\in\Gh_{0}$ we have 
$\widehat{\wt{f}}(\pi)_{{ij}}=0$ for $i>\kpi$.

With this, we can write the Fourier series of $f$ (or of $\wt{f}$, but we identify these) 
in terms of
the spherical functions $\pi_{ij}$ of the representations
$\pi\in\Gh_{0}$, with respect to the subgroup
$K$. 
Namely, the Fourier series \eqref{EQ:FS} becomes
\begin{equation}\label{EQ:FSh}
f(x)=\sum_{\pi\in\Gh_{0}} \dpi \sum_{i=1}^{\dpi}\sum_{j=1}^{\kpi}
\widehat{f}(\pi)_{ji}\pi(x)_{ij}=
\sum_{\pi\in\Gh_{0}} \dpi \Tr (\widehat{f}(\pi) \pi(x)),
\end{equation}
where, in order to have the last equality, we adopt the convention 
of setting $\pi(x)_{ij}:=0$ for all $j>\kpi$, for
all $\pi\in\Gh_{0}$.
With this convention the matrix
$\pi(x)\pi(x)^{*}$ is diagonal with the first $\kpi$ diagonal entries equal to one and
others equal to zero, so that we have
\begin{equation}\label{EQ:xi-HS}
\|\pi(x)\|_{\HS}=\sqrt{k_\pi} \textrm{ for all } \pi\in\Gh_{0}, \; x\in G/K.
\end{equation}
Following \cite{Dasgupta-Ruzhansky:Gevrey-BSM},
we will say that {\it the collection of Fourier coefficients
$\{\widehat{f}(\pi)_{ij}: \pi\in\Gh, 1\leq i,j\leq \dpi\}$ is 
of class I with respect to $K$} if
$\widehat{f}(\pi)_{ij}=0$ whenever $\pi\not\in\Gh_{0}$ or
$i>\kpi.$ By the above discussion, if the collection of Fourier
coefficients is of class I with respect to $K$, then the expressions
\eqref{EQ:FS} and \eqref{EQ:FSh} coincide and yield a function
$f$ such that $f(xh)=f(h)$ for all $h\in K$, so that this function becomes
a function on the homogeneous space $G/K$. 

For the space of Fourier coefficients of class I we define the analogue of the
set $\Sigma$ in \eqref{EQ:Sigma} by
\begin{equation}\label{EQ:SigmaGK}
\Sigma(G/K):=\{\sigma:\pi\mapsto\sigma(\pi): \;
\pi\in\Gh_{0},\; \sigma(\pi)\in\C^{\dpi\times\dpi}, \; \sigma(\pi)_{ij}=0
\textrm{ for } i>\kpi\}.
\end{equation}
In analogy to \eqref{EQ:Lp},
we can define the Lebesgue spaces $\ell^{p}(\Gh_{0})$
by the following norms which we will apply to Fourier coefficients
$\widehat{f}\in\Sigma(G/K)$ of $f\in\Dcal'(G/K)$.
Thus, for $\sigma\in\Sigma(G/K)$ we set
\begin{equation}\label{EQ:Lp-sigmaGK}
\|\sigma\|_{\ell^{p}(\Gh_{0})}:=\left(\sum_{\pi\in\Gh_{0}} \dpi
k_{\pi}^{p(\frac{1}{p}-\frac12)}
\|\sigma(\pi)\|_{\HS}^{p}\right)^{1/p},\; 1\leq p<\infty.
\end{equation}
In the case $K=\{e\}$, so that $G/K=G$, these spaces coincide with
those defined by \eqref{EQ:Lp} since $\kpi=\dpi$ in this case.
Again, by the same argument as that in \cite{RT},
these spaces are interpolation spaces and the Hausdorff-Young
inequality holds for them. We refer to
\cite{NRT2014} for some more details on
these spaces.

Similarly to \eqref{EQ:Lp-2}, the power of $k_\pi$ in \eqref{EQ:Lp-sigmaGK} can be naturally interpreted 
if we rewrite it in the form
\begin{equation}\label{EQ:Lp-sigmaGK-2}
\|\sigma\|_{\ell^p(\Gh_0)}:=\p{ \sum_{\pi\in\Gh_0} d_\pi k_\pi \left(\frac{\|\sigma(\pi)\|_{\HS}}
{\sqrt{k_\pi}}\right)^p}^{1/p},
\; \sigma\in\Sigma(G/K), \; 1\leq p<\infty,
\end{equation}
and think of $\mu(Q)=\sum_{\pi\in Q} d_\pi k_\pi$ as the Plancherel measure on $\Gh_0$, and of
$\sqrt{\kpi}$ as the normalisation for matrices $ \sigma(\pi)\in \C^{\dpi\times\dpi}$ under the
adopted convention on their zeros in \eqref{EQ:SigmaGK}.

\medskip
Let $\Lap_{G/K}$ be the differential operator on $G/K$ obtained by the Laplacian $\Lap_{G}$
on $G$ acting on functions that are constant on right cosets of $G$, i.e.,
such that $\widetilde{\Lap_{G/K}f}=\Lap_{G}\widetilde{f}$ for $f\in C^{\infty}(G/K)$.

Recalling, that the Hardy-Littlewood inequality can be formulated as 
\eqref{H_L_inequality-alt0} 
or
\eqref{H_L_inequality-alt},
we will show that the analogue of \eqref{H_L_inequality-alt} 
on a compact homogeneous manifold $G/K$ becomes
\begin{equation}
\label{H_L_inequality-alt-G0}
\| \widehat{ f}\|_{\ell^p(\Gh_0)} \leq
   C_p\|(1-\Delta_{G/K})^{n(\frac1p-\frac12)} f\|_{L^p(G/K)},
\end{equation}
where $n=\dim G/K$. This yields sharper results compared to using the
Schatten-based space $\ell^p_{sch}(\Gh_0)$ in 
view of the inequality
$$\| \widehat{ f}\|_{\ell^p_{sch}(\Gh_0)} \leq 
\| \widehat{ f}\|_{\ell^p(\Gh_0)}.$$
For more extensive analysis and description of Laplace operators on compact Lie groups and on compact homogeneous manifolds we 
refer to e.g. \cite{Stein:BOOK-topics-Littlewood-Paley} and \cite{Pesenson:Besov-2008}, respectively.
We note that every representation $\pi(x)=(\pi_{ij}(x))^{\dpi}_{i,j=1}\in\Gh_0$ is invariant under the right shift by $K$. Therefore,
$\pi(x)_{ij}$ for all $1\leq i,j\leq \dpi$ are eigenfunctions of $\Delta_{G/K}$ 
with the same eigenvalue, and we denote by $\jp{\pi}$ the corresponding 
eigenvalue for the first order pseudo-differential operator $(1-\Delta_{G/K})^{1/2}$,
so that we have
$$
(1-\Delta_{G/K})^{1/2} \pi(x)_{ij}=\jp{\pi} \pi(x)_{ij} \;\textrm{ for all } 
1\leq i,j\leq \dpi.
$$

We now formulate the analogue of the Hardy-Littlewood Theorem \ref{Hardy_Littlewood}
on a compact homogeneous manifolds $G/K$ as the inequality \eqref{H_L_inequality-alt-G0} and its dual:
\begin{thm}[Hardy-Littlewood inequalities]
\label{HL-groups-1}
Let $G/K$ be a compact homogeneous manifold of dimension $n$. Then the following holds.
\begin{enumerate}
\item Let $1<p\leq 2$. If $f\in L^p(G/K)$, then
$\FT_{G/K}\p{(1-\Delta_{G/K})^{n(\frac12-\frac1p)} f}\in \ell^p(\Gh_0),$ and
\begin{equation}
\label{H_L_inequality-alt-G}
\| \FT_{G/K}\p{(1-\Delta_{G/K})^{n(\frac12-\frac1p)} f}\|_{\ell^p(\Gh_0)} \leq
   C_p\|f\|_{L^p(G/K)}.
\end{equation}
Equivalently, we can rewrite this estimate as
\begin{equation}
\label{H_L_inequality-alt-G2}
\sum_{\pi\in\Gh_0} 
\dpi
k_\pi^{p(\frac1p-\frac12)}
 \jp{\pi}^{n(p-2)} 
\|\widehat{f}(\pi)\|_{\HS}^p \leq
   C_p\|f\|_{L^p(G/K)}^p.
\end{equation}
\item  Let $2\leq p<\infty$.  
If $\{\sigma(\pi)\}_{\pi\in\Gh_0}\in\Sigma(G/K)$ is a sequence of complex matrices such that 
$\jp{\pi}^{n(p-2)}\sigma(\pi)$ is in $\ell^p(\Gh_0)$,
then there is a function $f\in L^p(G/K)$ with Fourier coefficients given by $\widehat{f}(\pi)=\sigma(\pi)$, and 
\begin{equation}
\label{H_L_inequality-alt-G-upper}
\|f\|_{L^p(G/K)}\leq C_{p}^{\prime}\|\jp{\pi}^{\frac{n(p-2)}{p}}\widehat{f}(\pi)\|_{\ell^p(\Gh_0)}.
\end{equation}
Using the definition of the norm on the right hand side we can write this as
\begin{equation}
\label{H_L_inequality-alt-G-upper2}
\|f\|_{L^p(G/K)}^p\leq C_{p}^{\prime}\sum_{\pi\in\Gh_0} 
\dpi
k_\pi^{p(\frac1p-\frac12)}
 \jp{\pi}^{n(p-2)} 
\|\widehat{f}(\pi)\|_{\HS}^p.
\end{equation}
\end{enumerate}
\end{thm}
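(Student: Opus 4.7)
The plan is to derive part (1) from the Paley inequality \eqref{EQ:Paley0} by the specific choice $\varphi(\pi):=\jp{\pi}^{-n}$, and then obtain part (2) from part (1) by duality against $L^{p'}(G/K)$.

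For part (1), substituting $\varphi(\pi)=\jp{\pi}^{-n}$ into \eqref{EQ:Paley0} makes the weight $\varphi(\pi)^{2-p}$ equal to $\jp{\pi}^{n(p-2)}$, matching exactly the weight in \eqref{H_L_inequality-alt-G2}. It then remains to verify
\[
M_\varphi=\sup_{R>0} R^{-n}\, N(R)<\infty,\qquad N(R):=\sum_{\substack{\pi\in\Gh_0\\ \jp{\pi}\leq R}}\dpi\kpi.
\]
But $N(R)$ is exactly the eigenvalue counting function of $(1-\Delta_{G/K})^{1/2}$: each $\pi$-isotypical component of $L^2(G/K)$ is spanned by the spherical matrix coefficients $\pi(x)_{ij}$ with $1\leq i\leq\dpi$ and $1\leq j\leq\kpi$ and therefore has dimension $\dpi\kpi$. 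Since $(1-\Delta_{G/K})^{1/2}$ is a positive elliptic pseudo-differential operator of order one on the $n$-dimensional compact manifold $G/K$, Weyl's asymptotic law yields $N(R)\lesssim R^n$, hence $M_\varphi<\infty$. Plugging this back into \eqref{EQ:Paley0} gives \eqref{H_L_inequality-alt-G2}, equivalent to \eqref{H_L_inequality-alt-G} via the definition \eqref{EQ:Lp-sigmaGK}.

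For part (2), let $\sigma\in\Sigma(G/K)$ with the right-hand side of \eqref{H_L_inequality-alt-G-upper2} finite and consider the truncated partial sums $f_N(x):=\sum_{\jp{\pi}\leq N}\dpi\Tr(\sigma(\pi)\pi(x))$. For any $g\in C^\infty(G/K)$, Parseval on $G/K$ combined with $|\Tr(AB^*)|\leq\|A\|_{\HS}\|B\|_{\HS}$ yields
\[
\Bigl|\int_{G/K} f_N\,\bar g\,dx\Bigr|\leq \sum_{\jp{\pi}\leq N}\dpi\|\sigma(\pi)\|_{\HS}\|\widehat{g}(\pi)\|_{\HS}.
\]
Factoring each summand as
\[
\Bigl(\dpi^{1/p}\kpi^{\frac1p-\frac12}\jp{\pi}^{\frac{n(p-2)}{p}}\|\sigma(\pi)\|_{\HS}\Bigr)\Bigl(\dpi^{1/p'}\kpi^{\frac{1}{p'}-\frac12}\jp{\pi}^{-\frac{n(p-2)}{p}}\|\widehat{g}(\pi)\|_{\HS}\Bigr)
\]
and applying H\"older with exponents $p$ and $p'$ gives, using that $\frac1p+\frac{1}{p'}=1$ implies $-\frac{n(p-2)}{p}\cdot p'=n(p'-2)$,
\[
\Bigl|\int_{G/K} f_N\,\bar g\,dx\Bigr|\leq \bigl\|\jp{\pi}^{\frac{n(p-2)}{p}}\sigma\bigr\|_{\ell^p(\Gh_0)}\cdot\bigl\|\jp{\pi}^{\frac{n(p'-2)}{p'}}\widehat{g}\bigr\|_{\ell^{p'}(\Gh_0)}.
\]
Since $1<p'\leq 2$, part (1) applied to $g$ bounds the second factor by $C_{p'}\|g\|_{L^{p'}(G/K)}$. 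Taking the supremum over $\|g\|_{L^{p'}}\leq 1$ yields \eqref{H_L_inequality-alt-G-upper2} for $f_N$; the same bound applied to differences $f_N-f_M$ shows $\{f_N\}$ is Cauchy in $L^p(G/K)$, so its limit $f\in L^p(G/K)$ has $\widehat{f}=\sigma$ and satisfies \eqref{H_L_inequality-alt-G-upper}.

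The main technical point is the verification $M_\varphi<\infty$, which requires the correct identification of the eigenspace multiplicity on the quotient manifold: each $\pi\in\Gh_0$ contributes an eigenspace of dimension $\dpi\kpi$ (rather than $\dpi^2$ as on the full group $G$), and one then invokes Weyl's law for a first-order elliptic pseudo-differential operator on $G/K$. Once this is in place, the duality half of the argument reduces to careful bookkeeping of weights in a single H\"older inequality guided by the constraint $\frac1p+\frac1{p'}=1$.
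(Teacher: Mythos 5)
Your proof is correct and follows essentially the same route as the paper. For part (1) the paper likewise specializes the Paley inequality \eqref{EQ:Paley0} to $\varphi(\pi)=\jp{\pi}^{-n}$ and verifies $M_\varphi<\infty$ by identifying $\sum_{\jp{\pi}\leq L}\dpi\kpi$ with the eigenvalue counting function $N(L)$ of $(1-\Delta_{G/K})^{1/2}$ and invoking Weyl's law $N(L)\cong L^n$; for part (2) the paper simply states that it ``follows from the first by duality'' without elaboration, and your H\"older-plus-Parseval computation is the standard argument being alluded to there. One minor remark: in justifying $M_\varphi<\infty$ the paper is slightly more careful than a bare ``$N(R)\lesssim R^n$'' claim, observing that $\psi(u)=u\,N(u^{-1/n})$ is bounded by separately controlling its behaviour as $u\to 0$ (Weyl's law) and as $u\to\infty$ (where $\psi\to 0$ trivially); your argument implicitly covers both regimes since $N$ is monotone and vanishes for small $R$, so this is only a difference of presentation, not of substance.
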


For $p=2$, both of these statements reduce to the Plancherel identity \eqref{plancherel}.

We note that in view of the inequality \eqref{EQ:norms} the formulations
in terms of the space $\ell^p(\Gh_0)$ are sharper than if we used the space
$\ell^p_{sch}(\Gh_0)$. Indeed, for example, for $1<p\leq 2$, the inequality
\eqref{EQ:norms} means that
\begin{equation}\label{EQ:est1}
\sum_{\pi\in\Gh_0} 
\dpi \jp{\pi}^{n(p-2)} \|\widehat{f}(\pi)\|_{S^p}^p \leq
\sum_{\pi\in\Gh_0} 
\dpi
k_\pi^{p(\frac1p-\frac12)}
 \jp{\pi}^{n(p-2)} 
\|\widehat{f}(\pi)\|_{\HS}^p,
\end{equation}
which in turn implies
\begin{multline}
\label{H_L_inequality-alt-G3}
\| \FT_{G/K}\p{(1-\Delta_{G/K})^{n(\frac12-\frac1p)} f}\|_{\ell^p_{sch}(\Gh_0)} \\ \leq 
\| \FT_{G/K}\p{(1-\Delta_{G/K})^{n(\frac12-\frac1p)} f}\|_{\ell^p(\Gh_0)} \leq
   C_p\|f\|_{L^p(G/K)}.
\end{multline}


\subsection{Paley and Hausdorff-Young-Paley inequalities}
\label{SEC:PHYP}

In \cite{Hormander:invariant-LP-Acta-1960},
Lars Ho\"rman\-der proved a Paley-type inequality for the Fourier transform on $\RR^n$, see \eqref{EQ:Paley-Rn}. 
Here we give an analogue of this inequality on compact homogeneous manifolds.
\begin{thm}[Paley-type inequality]\label{THM:Paley_inequality} 
Let $G/K$ be a compact homogeneous manifold. 
Let $1<p\leq 2$. If  $\varphi(\pi)$ 
is a positive sequence over $\Gh_0$
 such that
	 \begin{equation}
 \label{EQ:weak_symbol_estimate}
	 M_\varphi:=\sup_{t>0}t\sum\limits_{\substack{\pi\in\Gh_0\\ \varphi(\pi)\geq t }}\dpi\kpi<\infty
	 \end{equation}
is finite, then we have 
\begin{equation}
\label{EQ:Paley_inequality}
\left(
\sum\limits_{\pi\in\Gh_0}\dpi k_\pi^{p(\frac1p-\frac12)}\|\widehat{f}(\pi)\|^p_{\HS}\
{\varphi(\pi)}^{2-p}
\right)^{\frac1p}
\lesssim
M_\varphi^{\frac{2-p}p}
\|f\|_{L^p(G/K)}.
\end{equation}
\end{thm}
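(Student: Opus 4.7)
The plan is to recast \eqref{EQ:Paley_inequality} as the boundedness of a sublinear operator between an $L^p$ space on $G/K$ and a weighted $L^p$ space on $\Gh_0$, and then to apply Marcinkiewicz interpolation between a trivial $L^2$-endpoint (from Plancherel) and a weak-$(1,1)$ endpoint whose constant carries the factor $M_\varphi$. Concretely, equip $\Gh_0$ with the measure $\nu(\{\pi\}):=\dpi \kpi \varphi(\pi)^2$, and define
\[
(Tf)(\pi):=\frac{\|\widehat f(\pi)\|_{\HS}}{\sqrt{\kpi}\,\varphi(\pi)}.
\]
Using the alternative form \eqref{EQ:Lp-sigmaGK-2} of the $\ell^p(\Gh_0)$ norm, the left-hand side of \eqref{EQ:Paley_inequality} is precisely $\|Tf\|_{L^p(\Gh_0,\nu)}$, so the target inequality becomes $\|Tf\|_{L^p(\nu)}\lesssim M_\varphi^{(2-p)/p}\|f\|_{L^p(G/K)}$.

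Next I would check the two endpoints. At $p=2$ the weight $\varphi^2$ cancels exactly: $\|Tf\|_{L^2(\nu)}^2=\sum_\pi \dpi \|\widehat f(\pi)\|_{\HS}^2=\|f\|_{L^2(G/K)}^2$ by the Plancherel identity \eqref{plancherel}, so $T:L^2(G/K)\to L^2(\nu)$ with norm $1$. For the weak-$(1,1)$ bound the key ingredient is the class I structure: since $\widehat f(\pi)$ has at most $\kpi$ nonzero columns (see \eqref{EQ:SigmaGK}) and $\|\widehat f(\pi)\|_{\op}\leq \|f\|_{L^1}$, we get $\|\widehat f(\pi)\|_{\HS}\leq \sqrt{\kpi}\,\|f\|_{L^1}$. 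Hence $Tf(\pi)>\lambda$ forces $\varphi(\pi)<\|f\|_{L^1}/\lambda$. Using the layer-cake representation $\varphi(\pi)^2=\int_0^\infty 2t\,\mathbf 1_{\varphi(\pi)>t}\,dt$, I can estimate
\[
\nu\bigl(\{Tf>\lambda\}\bigr)=\sum_{Tf(\pi)>\lambda}\dpi\kpi\varphi(\pi)^2\leq \int_0^{\|f\|_{L^1}/\lambda} 2t\sum_{\varphi(\pi)>t}\dpi\kpi\,dt,
\]
and the inner sum is $\leq M_\varphi/t$ by the definition \eqref{EQ:weak_symbol_estimate} of $M_\varphi$. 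Performing the $dt$-integration gives the desired weak-$(1,1)$ bound $\nu(\{Tf>\lambda\})\leq 2M_\varphi\|f\|_{L^1}/\lambda$.

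With these two endpoints in hand, I would invoke the abstract Marcinkiewicz interpolation theorem established on totally ordered discrete sets in Appendix \ref{SEC:Marc_Interpol_Theorem} (which was set up precisely to handle situations where the unitary dual carries a non-counting Plancherel-type measure, here $\nu$). Between weak-$(1,1)$ with constant $\lesssim M_\varphi$ and strong $(2,2)$ with constant $1$, the interpolation exponents $\theta=2(p-1)/p$ and $1-\theta=(2-p)/p$ give the constant $M_\varphi^{(2-p)/p}$ that appears in \eqref{EQ:Paley_inequality}.

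The only nontrivial step is the weak-$(1,1)$ bound; once that is isolated, the rest is bookkeeping of measures and exponents. The main obstacle is verifying that the geometry of $G/K$ — specifically the class I normalisation built into $\Sigma(G/K)$ and into the $\ell^p(\Gh_0)$-norm \eqref{EQ:Lp-sigmaGK-2} — is compatible with the estimate $\|\widehat f(\pi)\|_{\HS}\leq \sqrt{\kpi}\|f\|_{L^1}$; this is what forces the appearance of $\kpi$ rather than $\dpi$ in both the weight and the measure, and it is why the Hilbert-Schmidt based space $\ell^p(\Gh_0)$ is the correct one for this inequality.
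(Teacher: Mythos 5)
Your proposal is correct and follows essentially the same route as the paper: the same choice of measure $\nu(\{\pi\})=\dpi\kpi\varphi(\pi)^2$, the same sublinear operator $Tf=\{\|\widehat f(\pi)\|_{\HS}/(\sqrt{\kpi}\varphi(\pi))\}$, the strong $(2,2)$ endpoint from Plancherel, the weak $(1,1)$ endpoint from $\|\widehat f(\pi)\|_{\HS}\leq\sqrt{\kpi}\|f\|_{L^1}$ together with the $M_\varphi$-bound on the level-set sum, and the Marcinkiewicz interpolation theorem of Appendix \ref{SEC:Marc_Interpol_Theorem}. Your layer-cake computation of $\nu(\{Tf>\lambda\})$ is just a compressed form of the paper's $\int_0^{\varphi^2(\pi)}d\tau$ plus substitution $\tau=t^2$, so there is no meaningful difference in method.
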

As usual, the sum over an empty set in \eqref{EQ:weak_symbol_estimate} is assumed to be zero.

With $\varphi(\pi)={\jp\pi}^{-n}$, where $n=\dim G/K$,
using the asymptotic formula \eqref{EQ:Weyl}
for the Weyl eigenvalue counting function,
we recover the first part of Theorem \ref{HL-groups-1}
(see the proof of Theorem \ref{HL-groups-1}). In this sense, the Paley inequality is an
extension of one of the Hardy-Littlewood inequalities.

Now we recall the Hausdorff-Young inequality:
\begin{eqnarray}
\label{H-Y}
\left(
\sum\limits_{\pi\in\Gh_0}
\dpi k_\pi^{{p'}(\frac1{p'}-\frac12)}
\|\widehat{f}(\pi)\|^{p'}_{\HS}
\right)^{\frac1{p'}}
\equiv
\|\widehat{f}\|_{\ell^{p'}(\Gh_0)}\lesssim \|f\|_{L^p(G/K)},\quad 1 \leq p \leq 2,
\end{eqnarray}
where, as usual, $\frac1p+\frac{1}{p'}=1$.
The inequality \eqref{H-Y} was argued in \cite{NRT2014} in analogy to \cite[Section 10.3]{RT}, so we refer there
for its justification.
Further, we recall a result on the interpolation of weighted spaces from \cite{BL2011}:
\begin{thm}[Interpolation of weighted spaces]
\label{THM:L_p-weighted-interpolation}
 Let  $d\mu_0(x)=\omega_0(x)d\mu(x),\\ d\mu_1(x)=\omega_1(x)d\mu(x)$, and write $L^p(\omega)=L^p(\omega d\mu)$ for the weight $\omega$. Suppose that $0<p_0,p_1<\infty$. Then 
$$
	(L^{p_0}(\omega_0), L^{p_1}(\omega_1))_{\theta,p}=L^p(\omega),
$$
where $0<\theta<1,\frac1p=\frac{1-\theta}{p_0}+\frac{\theta}{p_1}$, and 
$\omega=\omega^{p\frac{1-\theta}{p_0}}_0 \omega^{p\frac{\theta}{p_1}}_1$.
\end{thm}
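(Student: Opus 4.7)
The plan is to establish the identification of the real interpolation space through a direct computation of Peetre's K-functional. For any compatible pair $(A_0,A_1)$ of (quasi-)Banach spaces continuously embedded into a common Hausdorff topological vector space, the K-functional is
\[
K(t,f;A_0,A_1) := \inf_{f = f_0+f_1}\bigl(\n{f_0}_{A_0} + t\n{f_1}_{A_1}\bigr),
\]
and the real interpolation norm is $\n{f}_{(A_0,A_1)_{\theta,p}} = \bigl(\int_0^\infty (t^{-\theta}K(t,f))^p\,\tfrac{dt}{t}\bigr)^{1/p}$. The goal is thus to compute (up to equivalence) this K-functional for $A_i = L^{p_i}(\omega_i)$ and to verify that the resulting interpolation norm coincides with the weighted $L^p$-norm with weight $\omega = \omega_0^{p(1-\theta)/p_0}\omega_1^{p\theta/p_1}$.

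First I would reduce, by density, to nonnegative simple functions $f$. Then, assuming without loss of generality that $p_0 \leq p_1$, I would argue that a near-optimal decomposition $f=f_0+f_1$ is obtained by level-set splitting: set $f_0 = f\chi_E$ and $f_1 = f\chi_{E^c}$, where
\[
E = \{x : |f(x)|^{p_1-p_0}\omega_1(x)/\omega_0(x) \ge \tau(t)\}
\]
for a suitable threshold $\tau(t)$. A direct calculation then yields
\[
K(t,f) \;\cong\; \Bigl(\int_E |f|^{p_0}\omega_0\,d\mu\Bigr)^{1/p_0} + t\Bigl(\int_{E^c} |f|^{p_1}\omega_1\,d\mu\Bigr)^{1/p_1}.
\]

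Next I would perform the $t$-integration. Substituting this formula for $K(t,f)$ into the definition of the interpolation norm and changing variables $t \mapsto \tau$, the resulting double integral collapses, by Fubini, to a single integral against the measure $\omega_0^{p(1-\theta)/p_0}\omega_1^{p\theta/p_1}\,d\mu$. The algebraic identity $\tfrac{1}{p} = \tfrac{1-\theta}{p_0} + \tfrac{\theta}{p_1}$ is precisely what makes the two exponents on $\omega_0$ and $\omega_1$ sum to $1$ and the overall power of $|f|$ equal to $p$; a careful bookkeeping computation verifies that the result agrees, up to equivalence constants, with $\n{f}_{L^p(\omega)}^p$.

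The main obstacle is the \emph{lower} bound for the K-functional: the upper bound $K(t,f) \lesssim (\text{level-set expression})$ is automatic, but ruling out any asymptotically cheaper decomposition requires a convexity/Hölder argument applied to pointwise majorisations $|f| \leq |f_0| + |f_1|$ with exponents tailored to $(p_0,p_1,\omega_0,\omega_1)$. An attractive way to bypass this delicate step is to prove the two inclusions $(L^{p_0}(\omega_0),L^{p_1}(\omega_1))_{\theta,p} \hookrightarrow L^p(\omega)$ and $L^p(\omega) \hookrightarrow (L^{p_0}(\omega_0),L^{p_1}(\omega_1))_{\theta,p}$ separately, the forward direction by testing the K-functional against the trivial decomposition on level sets of $f$ and the reverse direction by exhibiting, for each $f \in L^p(\omega)$, an explicit dyadic decomposition adapted to the weight $\omega$ that certifies membership in $(L^{p_0}(\omega_0),L^{p_1}(\omega_1))_{\theta,p}$.
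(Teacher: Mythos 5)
The paper does not prove this theorem; it is quoted as a known result from Bergh--L\"ofstr\"om \cite{BL2011} and used as a black box in the proof of Theorem \ref{Cor:general_Paley_inequality}. So there is no in-paper proof against which to compare your sketch, only the cited literature.

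Your K-functional strategy is a reasonable outline, and you correctly identify the lower bound for $K(t,f)$ as the crux. But the final paragraph, offered as a way to \emph{bypass} that difficulty, does not actually bypass it. Both of your proposed steps -- ``testing the K-functional against the trivial decomposition on level sets of $f$'' for the forward inclusion, and ``exhibiting an explicit dyadic decomposition'' for the reverse inclusion -- consist of producing a concrete splitting $f=f_0+f_1$. A concrete splitting can only furnish an \emph{upper} bound $K(t,f)\le\|f_0\|_{L^{p_0}(\omega_0)}+t\|f_1\|_{L^{p_1}(\omega_1)}$, hence an upper bound on $\|f\|_{(L^{p_0}(\omega_0),L^{p_1}(\omega_1))_{\theta,p}}$, and therefore proves only the single inclusion $L^p(\omega)\hookrightarrow(L^{p_0}(\omega_0),L^{p_1}(\omega_1))_{\theta,p}$. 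The other inclusion, $(L^{p_0}(\omega_0),L^{p_1}(\omega_1))_{\theta,p}\hookrightarrow L^p(\omega)$, requires a lower bound on $K$ (or an independent argument, e.g.\ via the $J$-method plus $p$-convexity of $L^p(\omega)$), and you have not supplied it; your two-inclusions plan re-imports the very difficulty you set out to avoid. For $p_0=p_1=p$ the gap is closable, since the infimum in the power functional $K_p(t,f)$ becomes a genuinely pointwise minimisation and the computation closes directly; but for $p_0\ne p_1$ you need one of the standard devices for the lower bound -- the Holmstedt formula via decreasing rearrangements after reducing to the unweighted couple, or the power theorem reducing to $(L^1,L^\infty)$ -- which is essentially how the cited source handles it.
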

From this, interpolating between the Paley-type inequality \eqref{EQ:Paley_inequality}  in Theorem \ref{THM:Paley_inequality} 
and  Hausdorff-Young inequality  \eqref{H-Y}, 
we obtain:
\begin{thm}[Hausdorff-Young-Paley inequality]
	\label{Cor:general_Paley_inequality}
	Let $G/K$ be a compact homogeneous manifold. 
	Let $1<p\leq b \leq p'<\infty$. 
	If a positive sequence $\varphi(\pi)$, $\pi\in\Gh_0$, satisfies condition 
	 \begin{equation}
 \label{EQ:weak_symbol_estimate2}
	 M_\varphi:=\sup_{t>0}t\sum\limits_{\substack{\pi\in\Gh_0\\ \varphi(\pi)\geq t }}\dpi\kpi<\infty,
	 \end{equation}
then we have
	\begin{equation}
	\label{EQ:general_Paley_inequality}
	\left(
	\sum\limits_{\pi\in\Gh_0}
	\dpi
	k_\pi^{b(\frac1b-\frac12)}
	\left(\|\widehat{f}(\pi)\|_{\HS} \
	{\varphi(\pi)}^{\frac1b-\frac{1}{p'}}
	\right)^b
	\right)^{\frac1b}
	\lesssim
	M_\varphi^{\frac1b-\frac1{p'}}
	\|f\|_{L^p(G/K)}.
	\end{equation}
\end{thm}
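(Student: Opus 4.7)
The plan is to interpolate between the Paley inequality (Theorem \ref{THM:Paley_inequality}) at the endpoint $b = p$ and the Hausdorff-Young inequality \eqref{H-Y} at the endpoint $b = p'$, since both map the common source space $L^p(G/K)$ into weighted Lebesgue spaces on $\Gh_0$, and then apply the weighted real interpolation theorem (Theorem \ref{THM:L_p-weighted-interpolation}). Note that the hypothesis $p \leq p'$ already forces $p \leq 2$, so both endpoint inequalities are applicable.

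Setup: let $d\mu(\pi) = \dpi \kpi$ denote the Plancherel measure on $\Gh_0$, and, for $f \in L^p(G/K)$, set
$g_f(\pi) := \|\widehat{f}(\pi)\|_{\HS}/\sqrt{\kpi}$,
motivated by \eqref{EQ:Lp-sigmaGK-2}. A direct computation using $k_\pi^{q(1/q - 1/2)}\|\widehat{f}(\pi)\|_{\HS}^q = k_\pi\, g_f(\pi)^q$ shows that for any $q \in [1,\infty)$ and any nonnegative weight $\omega$ on $\Gh_0$,
\[
\sum_{\pi \in \Gh_0} \dpi k_\pi^{q(1/q - 1/2)} \|\widehat{f}(\pi)\|_{\HS}^q\, \omega(\pi) = \int_{\Gh_0} g_f(\pi)^q\, \omega(\pi)\, d\mu(\pi).
\]
Under this identification, Theorem \ref{THM:Paley_inequality} reads $\|g_f\|_{L^p(\varphi^{2-p} d\mu)} \lesssim M_\varphi^{(2-p)/p}\, \|f\|_{L^p(G/K)}$ and \eqref{H-Y} reads $\|g_f\|_{L^{p'}(d\mu)} \lesssim \|f\|_{L^p(G/K)}$.

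Interpolation: apply Theorem \ref{THM:L_p-weighted-interpolation} with $p_0 = p$, $p_1 = p'$, weights $\omega_0 = \varphi^{2-p}$, $\omega_1 \equiv 1$, and parameter $\theta$ chosen so that $1/b = (1-\theta)/p + \theta/p'$. This produces
\[
(L^p(\varphi^{2-p} d\mu),\, L^{p'}(d\mu))_{\theta, b} = L^b(\omega\, d\mu), \qquad \omega = \varphi^{(2-p)\, b(1-\theta)/p}.
\]
Using the identity $1/p - 1/p' = (2-p)/p$, one computes $\theta = (b - p)/(b(2-p))$, and then verifies by elementary algebra that $(2-p)\, b(1-\theta)/p = 1 - b/p'$ and simultaneously $(1-\theta)(2-p)/p = 1/b - 1/p'$. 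Hence $\omega = \varphi^{b(1/b - 1/p')}$, and the interpolated operator bound becomes
\[
\|g_f\|_{L^b(\omega\, d\mu)} \lesssim M_\varphi^{1/b - 1/p'}\, \|f\|_{L^p(G/K)},
\]
which, rewritten via the identification above, is precisely \eqref{EQ:general_Paley_inequality}. Real interpolation of operators applies here despite $f \mapsto g_f$ being only sublinear, since the Marcinkiewicz-style real interpolation of weighted Lebesgue spaces extends to sublinear operators.

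The main obstacle is not conceptual but bookkeeping: correctly aligning the Paley and Hausdorff-Young inequalities with a common underlying Plancherel measure $d\mu$ so that the target power $k_\pi^{b(1/b - 1/2)}$ emerges automatically, and verifying that the single interpolation parameter $\theta = (b - p)/(b(2-p))$ simultaneously produces the correct exponent of $\varphi$ in the target weight and the correct exponent of $M_\varphi$ in the constant. Once the setup with $g_f$ is in place, both verifications reduce to elementary algebra with $p, p', b$.
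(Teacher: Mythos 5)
Your proof is correct and follows essentially the same route as the paper: both interpolate the sublinear operator $f \mapsto \{\widehat{f}(\pi)/\sqrt{\kpi}\}_{\pi\in\Gh_0}$ between the Paley inequality at exponent $p$ (with weight $\varphi^{2-p}$ against the Plancherel measure $\dpi\kpi$) and the Hausdorff--Young inequality at exponent $p'$ (with weight $1$), invoking the weighted-$L^p$ interpolation theorem. Your write-up is somewhat more explicit than the paper's terse proof in actually verifying that $\theta=(b-p)/(b(2-p))$ produces both the weight exponent $b(1/b-1/p')$ on $\varphi$ and the constant exponent $1/b-1/p'$ on $M_\varphi$, and in noting that $(L^p,L^p)_{\theta,b}=L^p$ so the source space is preserved.
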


This reduces to the Hausdorff-Young inequality \eqref{H-Y} when $b=p'$ and to 
the Paley inequality in \eqref{EQ:Paley_inequality} when $b=p$.

\begin{proof}[Proof of Theorem \ref{Cor:general_Paley_inequality}]
	We consider   a  sub-linear operator $A$ which takes a function $f$ to its Fourier transform $\widehat{f}(\pi)\in\C^{d_{\pi}\times d_{\pi}}$ divided by $\sqrt{\kpi}$, i.e.
$$
	L^p(G/K)\ni f 
	\mapsto
	Af
	=
	\left\{
	\frac
	{\widehat{f}(\pi)}
	{\sqrt{\kpi}}
	\right\}_{\pi\in\Gh_0}
	\in \ell^p(\Gh_0,\omega),
$$
where the spaces $\ell^p(\Gh_0,\omega)$ is defined by the norm
\begin{eqnarray*}
\|\sigma(\pi)\|_{\ell^p(\Gh_0,\omega)}
:=
\left(
\sum\limits_{\pi\in\Gh_0}\|\sigma(\pi)\|^p_{\HS}\,\omega(\pi)
\right)^{\frac1p},
\end{eqnarray*}
and $\omega(\pi)$ is a positive scalar sequence over $\Gh_0$ to be determined. 
	Then the statement follows from Theorem \ref{THM:L_p-weighted-interpolation} if we regard 
	the left-hand sides of inequalities \eqref{EQ:Paley_inequality} and
	\eqref{H-Y} as $\|Af\|_{\ell^p(\Gh_0,\omega)}$-norms in weighted sequence spaces over $\Gh_0$, 
	with the weights given by $\omega_0(\pi)=\dpi\kpi{\varphi(\pi)}^{2-p}$ and $\omega_1(\pi)=\dpi\kpi,\,\pi\in\Gh_0$,
	respectively.
\end{proof}

\subsection{Integrability criterion for functions in terms of the matrix Fourier coefficients}
\label{SEC:criterion}

In this section we show that the results of Section \ref{SEC:notation-HL} are in general sharp,
by looking at the specific example of the group SU(2) in detail.

Imposing more conditions on matrix Fourier coefficients, we make a criterion out of the 
Hardy-Littlewood inequalities in Theorem \ref{HL-groups-1}.
In fact, here we aim at obtaining a noncommutative version of the following 
criterion that Hardy and Littlewood proved in \cite{HL}:

\begin{thm} \label{THM:HL-criterion}
Let $1<p<\infty$. Suppose $f\in L^1(\TT),\,f\sim \sum \widehat{f}_m e^{2\pi i mx},$ and its 
Fourier coefficients $\{\widehat{f}_m\}_{m\in\ZZ}$ are monotone. Then we have
	\begin{equation}
	f\in L^p(\TT)
	\end{equation}
	if and only if
	\begin{equation}
	\sum\limits_{m\in \ZZ}(1+|m|)^{p-2}|\widehat{f}_m|^p<\infty.
	\end{equation}
\end{thm}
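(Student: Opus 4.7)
The plan is to split the equivalence into two implications in each of the two ranges $1<p\leq 2$ and $2\leq p<\infty$, and then observe that the Hardy-Littlewood inequalities of Theorem \ref{Hardy_Littlewood} already handle two of the four implications without any monotonicity hypothesis. Specifically, the direction $f\in L^p(\TT)\Rightarrow \sum (1+|m|)^{p-2}|\widehat{f}_m|^p<\infty$ for $1<p\leq 2$ is precisely \eqref{H_L_inequality}, while the converse direction $\sum(1+|m|)^{p-2}|\widehat{f}_m|^p<\infty\Rightarrow f\in L^p(\TT)$ for $2\leq p<\infty$ is part (2) of Theorem \ref{Hardy_Littlewood}. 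This reduces the problem to the two remaining implications, both of which genuinely use monotonicity.

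For these I would first reduce to the case in which $\{\widehat{f}_m\}_{m\geq 0}$ is a nonnegative monotone decreasing null sequence, with a symmetric reduction on the negative side; the signed case is handled by splitting into positive and negative parts and using that a monotone sequence has eventually constant sign. Grouping the indices into dyadic blocks $B_k=\{m\in\ZZ:2^k\leq|m|<2^{k+1}\}$, monotonicity gives $\widehat{f}_m\cong \widehat{f}_{2^k}$ uniformly for $m\in B_k$, so that
\begin{equation*}
\sum_{m\in\ZZ}(1+|m|)^{p-2}|\widehat{f}_m|^p\cong \sum_{k\geq 0}2^{k(p-1)}|\widehat{f}_{2^k}|^p.
\end{equation*}

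The core analytic step is then to prove that for monotone coefficients one also has $\|f\|_{L^p(\TT)}^p\cong \sum_{k\geq 0}2^{k(p-1)}|\widehat{f}_{2^k}|^p$. The classical route goes through the pointwise asymptotic $f(x)\cong \sum_{n\leq \pi/|x|}\widehat{f}_n$ valid for $0<|x|<\pi$, obtained by Abel summation together with the Dirichlet bound $\bigl|\sum_{n\leq N}e^{inx}\bigr|\lesssim 1/|x|$ and the monotonicity of $\widehat{f}_n$. Inserting this into $\int_\TT|f(x)|^p\,dx$, making the change of variable $N\cong 1/|x|$, and applying a discrete Hardy inequality converts the partial-sum averages $\tfrac{1}{N}\sum_{n\leq N}\widehat{f}_n$ into the dyadic sums on the right-hand side. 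Monotonicity is essential both for the pointwise asymptotic and for the Hardy-type estimate to be two-sided.

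The main obstacle is the pointwise asymptotic for Fourier series with monotone coefficients, which is classical (due to Hardy and Littlewood themselves in \cite{HL}) but genuinely requires care: one needs the lower bound $f(x)\gtrsim \sum_{n\leq\pi/|x|}\widehat{f}_n$ and not only the upper bound, and this is where monotonicity is indispensable. Once the two-sided comparison for $\|f\|_{L^p(\TT)}^p$ is established, the theorem follows by comparing it with the dyadic expression for the weighted sum displayed above.
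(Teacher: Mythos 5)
This theorem is quoted in the paper as a classical result of Hardy and Littlewood and is not proved there; the paper's own analogous argument is the $\SU2$ case, Theorem \ref{THM:integrability-criterion-1-p-2-SU2}, stated only for $\frac32<p\leq 2$, where the necessary direction is handled by the Hardy--Littlewood inequality alone and the sufficient direction by a pointwise \emph{upper} bound derived from Dirichlet-kernel estimates. Your identification of which two of the four implications follow from Theorem \ref{Hardy_Littlewood} with no monotonicity is correct, as is the dyadic block comparison, and the upper-bound half of the pointwise argument combined with a discrete Hardy inequality does settle the implication ``sum converges $\Rightarrow f\in L^p(\TT)$'' for $1<p\leq 2$. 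So the structure is right, and one of the two remaining implications is done correctly.

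The genuine gap is the two-sided pointwise asymptotic $f(x)\cong\sum_{n\leq\pi/|x|}\widehat{f}_n$, on which you hang the last implication (``$f\in L^p\Rightarrow$ sum converges'' for $2\leq p<\infty$). The lower bound there is false even for nonnegative decreasing coefficients: take $\widehat{f}_m=1$ for $|m|\leq N$ and $0$ otherwise, so $f=D_N$, the Dirichlet kernel. Then $D_N$ vanishes at $x=2\pi k/(2N+1)$ for $k=1,\dots,N$, and at such $x\cong 1/N$ one has $\sum_{n\leq\pi/|x|}\widehat{f}_n\cong N$ while $f(x)=0$. This is not a pathology one can wave away: the Abel-summation tail $\sum_{n>M}\widehat{f}_n\cos nx$ is controlled only by $\widehat{f}_M/x\cong M\widehat{f}_M$, which is generically \emph{comparable} to the main term $\sum_{n\leq M}\widehat{f}_n\geq M\widehat{f}_M$, not small relative to it, so no choice of cutoff salvages a pointwise lower bound. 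A working replacement is to lower-bound the Littlewood--Paley pieces rather than $f$ itself: $\Delta_kf=\sum_{2^{k-1}<|n|\leq 2^k}\widehat{f}_n e^{inx}$ is a finite trigonometric polynomial with nonnegative coefficients, and for $|x|\leq c2^{-k}$ every $\cos nx$ in that range is bounded below, so $|\Delta_kf(x)|\gtrsim\Delta_kf(0)\gtrsim 2^k\widehat{f}_{2^k}$ on an interval of length $\cong 2^{-k}$, whence $\|\Delta_kf\|_{L^p}^p\gtrsim 2^{k(p-1)}\widehat{f}_{2^k}^p$. Since $p\geq 2$, $\sum_k\|\Delta_kf\|_{L^p}^p\leq\big\|(\sum_k|\Delta_kf|^2)^{1/2}\big\|_{L^p}^p\cong\|f\|_{L^p}^p$, which closes the implication without any pointwise lower bound on $f$.
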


In this section we extend this result to $G=\SU2$ and
formulate a necessary and sufficient condition for $f\in L^1(\SU2)$ to belong to $L^p(\SU2)$. 
The criterion is  given in terms of the matrix Fourier coefficients. It argues that the powers 
chosen in the Hardy-Littlewood inequality are in general sharp.

First we propose a notion of general monotonicity for a sequence of matrices extending the usual
notion of monotonicity of scalars (of scalar Fourier coefficients).

\begin{defn}\label{DEF:almost_scalar} A sequence of matrices 
$\{\sigma(\pi)\}_{\pi\in\Gh_0}\in \Sigma(G/K)$  will be called {\it almost scalar} if the following conditions hold:
	\begin{enumerate}
		\item
		\label{ITEM:normal-matrix}
		For any $\pi\in\Gh_0$ the matrix $\sigma(\pi)$ is normal.
		\item 
		\label{ITEM:eigenvalue-equivalence}
		 There are constants $C_1>0$ and $C_2>0$ such that
		for any $\pi\in\Gh_0$ we have
		$$
		C_1 \leq \frac{|\lambda_i(\pi)|}{|\lambda_j(\pi)|}\leq C_2,
		$$
		for every $\lambda_i(\pi)\neq 0$ and $\lambda_j(\pi)\neq 0$,
		where $\lambda_i(\pi)\in\C$, $i=1,\ldots,\dpi$, denote the eigenvalues of 
		$\sigma(\pi)\in\C^{d_{\pi}\times d_{\pi}}$.
	\end{enumerate}
\end{defn}

As our main interest in this subsection is the group SU(2), we specify the following definition to its setting, with the
specific notation for SU(2) explained after the following definition.
This specification is done for simplicity; the following notion of monotonicity can be naturally extended to the 
setting of general compact Lie groups as well.

\begin{defn}
	\label{DEF:matrix-monotonicity}
	A sequence of matrices $\{\sigma(\l)\}_{l\in\frac12\NN_0}$ is said to be {\it monotone } if the following conditions hold:
	\begin{enumerate}
		\item
		\label{ITEM:almost-scalar-positive}
		The sequence $\{\sigma(l)\}_{l\in\frac12\NN_0}$ is almost scalar and every matrix $\sigma(\l)$ is 
		non-negative definite.
		\item 
		\label{ITEM:trace-decrease}
		Denoting by $\sigma_l$ any non-zero eigenvalue of the almost scalar matrix $\sigma(l)\in\C^{(2l+1)\times (2l+1)}$,
		the sequence $(2l+1)\sigma_l$ is decreasing, i.e.
 \begin{equation}\label{EQ:decay}
(2l+1)\sigma_{l}-(2l+2)\sigma_{l+\frac12}\geq 0
\end{equation}
 		for all $l\in\frac12\NN_0$.
\end{enumerate}
\end{defn}

In terms of general compact Lie groups, condition \eqref{EQ:decay} means that the sequence 
$\{d_\pi \sigma_\pi\}_\pi$ is decreasing along some specified ordering on the representation lattice.
In the case of the torus we have $d_\pi\equiv 1$, so this corresponds to the usual notion of
monotonicity on $\widehat{\TT}\cong\ZZ$.

\medskip
We now give a criterion for $f$ to be in $L^p$ also for $p<2$, for central functions on the compact Lie group $\SU2$.
In this case  it is common to simplify the notation, since we have the identification
of the dual $\widehat{\SU2}\cong \frac12\NN_0$ with non-negative half-integers.
Following Vilenkin \cite{Vilenkin:BK-eng-1968} it is customary to denote the representations by
$T^l\in \widehat{\SU2}$ for $l\in \frac12\NN_0$. Then we have $d_l:= d_{T^l}=2l+1$, and we abbreviate 
$\widehat{f}(T^l)=\widehat{f}(l)$.
The Plancherel identity on $\SU2$ can then be written as
\begin{equation}\label{EQ:Planch-SU2}
\|f\|^2_{L^2(\SU2)}
=
\sum\limits_{\substack{l\in\dualSU2}}
(2l+1)
\|\widehat{f}(l)\|^2_{\HS}.	
\end{equation}
We can refer to \cite{Ruzhansky+Turunen-IMRN, RT} for explicit calculations of representations and difference
operators on $\SU2$. 

\begin{rem}
The range $\frac32<p\leq 2$ appearing in Theorem \ref{THM:integrability-criterion-1-p-2-SU2}
has a natural interpretation and is related to the convergence
properties of the polyhedral Fourier partial sums. It corresponds exactly to the range 
$1<p<\infty$ on the circle. We refer to Appendix \ref{APP:polyhedral} for the detailed explanation of
these properties in terms of an auxiliary number $s$ that can be expressed in terms of the root system of the group.

In the following theorem, we denote by $L^{p}_{*}(G)$ space of central functions on $G$.
The restriction on $p$ to satisfy $2-\frac1{s+1}<p<2+\frac1s$ in the setting of compact Lie groups comes from the fact 
that on compact simply connected semisimple Lie groups, the polyhedral Fourier partial sums of (a central function) $f$
converge to $f$ in $L^p$ if and only if $2-\frac1{s+1}<p<2+\frac1s$, with $s$ defined 
as in \eqref{EQ:s} in terms of the
root system of $G$, see 
Stanton \cite{Sta1976}, Stanton and Tomas \cite{Stanton-Tomas:BAMS-1976}, and 
Colzani, Giulini and Travaglini \cite{CGT1989} for the only if statement. We recall one of such statements in 
Theorem \ref{THM:polyhedral_mean_convergence}.
In the case of $\SU2$ the number $s$  is $s=1$, so that the range
$2-\frac{1}{1+s}<p\leq 2$ that we are interested in becomes $\frac32<p\leq 2$ appearing
in Theorem \ref{THM:integrability-criterion-1-p-2-SU2}. We note that such restriction of $p>\frac32$ already appeared in
the literature on $\SU2$ also in other contexts, for example also for questions related to Fourier multipliers (extending
the results of Coifman and Weiss \cite{Coifman-Weiss:SU2-Argentina-1970}), see Clerc \cite{Clerc:SU2-CRAS-1971}.
\end{rem}

\begin{thm}
	\label{THM:integrability-criterion-1-p-2-SU2}
Let $\frac32<p\leq 2$. 
Suppose $f\in L^{3/2}_*(\SU2)$    and the sequence of its Fourier coefficients $\{\widehat{f}(l)\}_{l\in\dualSU2}$ is monotone.  Assume that there is a constant $C>0$ such that for any $\xi\in\frac12\NN_0$ the following inequality holds true 
\begin{equation}
\label{condition_on_dim}
\sum\limits_{\substack{l\in\frac12\NN_0\\ l\geq \xi}}(d_{l+\frac12}-d_{l})\widehat{f}_l\leq C d_{\xi}\widehat{f}_{\xi},
\end{equation}
where $d_l$ are the dimensions of the irreducible representations $\{T^l\}_{l\in\frac12\NN_0}$ of the group $\SU2$,
and $\widehat{f}_l$ are obtained from $\widehat{f}(l)$ as in Definition \ref{DEF:matrix-monotonicity}.
Then we have
\begin{equation}
f\in L^p(\SU2)
\end{equation}
 if and only if 
\begin{equation}
\sum\limits_{\substack{l\in\frac12\NN_0}}
(2l+1)^{\frac{5p}2-4}
\|\widehat{f}(l)\|^p_{\HS}
<\infty.
\end{equation}
Moreover, in this case we have
\begin{equation}
\|f\|^p_{L^p(\SU2)}
\cong
\sum\limits_{\substack{l\in\dualSU2}}
(2l+1)^{\frac{5p}2-4}
\|\widehat{f}(l)\|^p_{\HS}.	
\end{equation}	
\end{thm}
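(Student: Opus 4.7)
The forward direction ($f\in L^p(\SU2)$ implies summability of the series) is an immediate application of Theorem \ref{HL-groups-1}(1) to $G/K=\SU2$ with trivial subgroup $K=\{e\}$. Here $n=\dim\SU2=3$, $\dpi=\kpi=2l+1$ for $\pi=T^l$, and the Laplace eigenvalue satisfies $\jp{T^l}\simeq 2l+1$; substituting into \eqref{H_L_inequality-alt-G2} and combining exponents,
$$ 1 + p\bigl(\tfrac{1}{p}-\tfrac{1}{2}\bigr) + 3(p-2) = \tfrac{5p}{2}-4, $$
yields $\sum_l(2l+1)^{\frac{5p}{2}-4}\|\widehat{f}(l)\|_\HS^p\lesssim \|f\|_{L^p(\SU2)}^p$.

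For the converse, centrality of $f$ forces each matrix Fourier coefficient $\widehat{f}(l)$ to be a scalar multiple $\widehat{f}_l\, I_{2l+1}$ of the identity, so $\|\widehat{f}(l)\|_\HS=\widehat{f}_l\sqrt{2l+1}$ and $f$ admits the character expansion $f(x)=\sum_{l\in\dualSU2}(2l+1)\widehat{f}_l\,\chi_l(x)$. By Definition \ref{DEF:matrix-monotonicity} one has $\widehat{f}_l\geq 0$ with $\{(2l+1)\widehat{f}_l\}_l$ decreasing, so Abel summation recasts $f$ as the telescoping expansion
$$ f(x) = \sum_{N\in\dualSU2}\bigl[(2N+1)\widehat{f}_N-(2N+2)\widehat{f}_{N+1/2}\bigr]\,\mathcal{D}_N(x), $$
with nonnegative coefficients, where $\mathcal{D}_N:=\sum_{l\leq N}\chi_l$ is the partial-sum (Dirichlet) kernel. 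The sharp bound for $\mathcal{D}_N$ supplied by Proposition \ref{PROP:DK_estimate}, valid precisely in the range $\frac{3}{2}<p\leq 2$, then controls $\|f\|_{L^p(\SU2)}$ by a weighted sum of the Abel differences; reversing the summation by parts, and invoking the non-oscillation hypothesis \eqref{condition_on_dim} (which on $\SU2$ simplifies thanks to $d_{l+1/2}-d_l\equiv 1$), converts this bound into the target series $\sum_l(2l+1)^{\frac{5p}{2}-4}\|\widehat{f}(l)\|_\HS^p$. Combining the two opposite inequalities produces the equivalence stated in the ``Moreover'' assertion.

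The main obstacle is Proposition \ref{PROP:DK_estimate}: it must be applied in a form sharp enough to reproduce the exact exponent $\frac{5p}{2}-4$ (a plain triangle inequality against $\|\mathcal{D}_N\|_{L^p}$ is too wasteful, so the estimate must be combined with the nonnegativity of the Abel differences and with the non-oscillation hypothesis \eqref{condition_on_dim}). The restriction $p>\frac{3}{2}$ is structural, arising at the critical index where $\mathcal{D}_N$ fails to lie in $L^p(\SU2)$; this corresponds to the Bochner-Riesz-type critical index in dimension $n=3$ and, as reviewed in Appendix \ref{APP:polyhedral}, matches the polyhedral-sum convergence threshold $p>2-\frac{1}{1+s}$ with $s=1$ for the root system of $\SU2$.
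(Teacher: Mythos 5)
Your forward direction is exactly the paper's: specialize Theorem~\ref{HL-groups-1}(1) to $G=\SU2$, $K=\{e\}$, $n=3$, $\dpi=\kpi=2l+1$, $\jp{T^l}\cong 2l+1$, and collect exponents to get $\frac{5p}{2}-4$. No issue there.

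The converse, however, has two genuine problems. First, you perform Abel summation with coefficients $(2N+1)\widehat{f}_N$ against the \emph{unweighted} kernel $\mathcal{D}_N=\sum_{l\leq N}\chi_l$, and then invoke Proposition~\ref{PROP:DK_estimate}. But Proposition~\ref{PROP:DK_estimate} concerns the \emph{weighted} kernel $D_l(t)=\sum_{k\leq l}(2k+1)\chi_k(t)$, which is the Dirichlet kernel actually associated to partial Fourier sums on $\SU2$; it gives $|D_l(t)|\lesssim(2l+1)/t^2$, not a bound on your $\mathcal{D}_N$. The paper's Abel summation is done differently: it keeps the scalar coefficients $\widehat{f}_l$, makes $D_l$ appear as the partial sums of $(2l+1)\Tr T^l(u)$, and — crucially — truncates the series at $l\leq k$, splits at an arbitrary threshold $\xi$, and lets $k\to\infty$ using a.e.\ convergence (Theorem~\ref{THM:pointwise-convergence-G}, which is where the restriction $p>\frac{3}{2}=\frac{2n}{n+l}$ actually enters; it is not a statement about $\mathcal{D}_N$ failing to be in $L^p$). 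If you run your telescoping without a threshold, the Abel differences collapse to $\widehat{f}_0$ and you get the useless bound $|f|\lesssim\widehat{f}_0/t^2$.

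Second, and more seriously, ``converts this bound into the target series'' is where essentially all the work lies and you give no argument. The paper first derives the pointwise estimate $|f(u)|\lesssim(2\xi+1)^{-2}t^{-2}\sum_{l\leq\xi}(2l+1)\Tr\widehat{f}(l)$ uniformly in $\xi$, then integrates in $t$ using Weyl's integral formula for class functions, splits $[0,1]$ into the nested intervals $[(2\xi+2)^{-1},(2\xi+1)^{-1}]$ applying the $\xi$-dependent bound on each piece, and finally appeals to a net-space Hardy--Littlewood inequality from \cite{Npq:lattices} (for the family $\M=\{\{\xi:\jp\xi\leq\jp\pi\}\}$) to control the resulting averaged sums by $\|f\|_{L^p}$. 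Neither the pointwise estimate, nor the dyadic-type decomposition in $t$, nor the net-space ingredient appear in your sketch, and without the last one the chain of inequalities does not close into the ``Moreover'' equivalence. So while the strategic idea (Abel summation plus a Dirichlet kernel estimate plus the non-oscillation hypothesis) is pointing in the right direction, the proposal as written both applies the kernel estimate to the wrong kernel and omits the computational core of the converse.
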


\begin{rem} \label{REM:osc}
The non-oscillation type
condition \eqref{condition_on_dim} always holds for compact abelian groups, 
since in that case all the irreducible representations are $1$-dimensional, so that the expression on
the left hand side of \eqref{condition_on_dim} would be zero. Here, in the setting of SU(2), since
$d_l=2l+1$, \eqref{condition_on_dim} boils down to assuming
$$\sum\limits_{\substack{l\in\frac12\NN_0\\ l\geq \xi}}\widehat{f}_l\leq C(2\xi+1)\widehat{f}_{\xi}.$$
From this point of view this kind of assumption may be viewed as rather natural in some sense 
because it does measure how fast
the sequence of Fourier coefficients decreases compared to the dimensions of representations.
We formulate this condition in the form \eqref{condition_on_dim} to emphasise its geometric meaning:
it becomes clear how it can be extended to more general 
groups\footnote{We conjecture an analogue of Theorem \ref{THM:integrability-criterion-1-p-2-SU2} to hold
for general compact Lie groups, or even for compact homogeneous manifolds. At the moment we can not
prove it in full generality since currently we can prove in Proposition \ref{PROP:DK_estimate}
the estimate for the Dirichlet kernel that is needed for our proof
only in the setting of SU(2).}
and it is very clear that it is trivially 
satisfied on the torus.
\end{rem}

Therefore, 
Theorem \ref{THM:integrability-criterion-1-p-2-SU2} can be regarded as the direct extension of the Hardy-Littlewood
criterion in Theorem \ref{THM:HL-criterion} from the circle $\TT$ to $\SU2$. Indeed, 
the condition that functions on $\SU2$ are central is rather natural since (all) functions on $\TT$ are also
central. Moreover, the indices of $p$ correspond to each other as well, both coming from the
condition $2-\frac{1}{1+s}<p\leq 2$, which on $\TT$ becomes $1<p\leq 2$ since $s=0$, and
on $\SU2$ it is $\frac32<p\leq 2$ since $s=1$.
We also note that the assumption for functions to be central functions is rather natural since their
behaviour is very different from that of general functions, as we now briefly explain also from a 
more general perspective. 

For example, if $G$ is a compact connected semisimple Lie group and $p\not=2$, there is 
a function $f\in L^p(G)$ such that the polyhedral Fourier partial sum of $f$ does not converge to $f$ in $L^p$,
for the dilations of any open convex polyhedron in the Lie algebra of the maximal torus centred at the
origin, see Stanton and Tomas \cite{Stanton-Tomas:BAMS-1976,Stanton-Tomas:AJM-1978}. 
Such negative results are closely related
with multiplier problems for the ball and for multiple Fourier series, see
Fefferman \cite{Fefferman:divergence-BAMS-1971}. The same negative results hold also for spherical sums,
see Fefferman \cite{Fefferman:ball-AM-1971} on the torus, and on more general groups
Clerc \cite{Clerc:spherical-sums-CRAS-1972} and \cite{Clerc:PhD-thesis}.
In this paper we are using the polyhedral Fourier sums, in which case positive results become possible 
if we restrict to considering central functions. Thus, on a compact semisimple Lie group $G$,
for $2-\frac1{s+1}<p<2+\frac1s$, polyhedral Fourier partial sums of a central function $f$ converge to $f$
in $L^p(G)$, see Stanton \cite[Theorem 4.1]{Sta1976}.
If $G$ is a simple simply connected compact Lie group and $p$ falls outside of the above interval,
there are central functions in $L^p(G)$ such that their polyhedral Fourier partial sums do not converge
to $f$ in the $L^p$-norm, see Stanton and Tomas 
\cite{Stanton-Tomas:BAMS-1976,Stanton-Tomas:AJM-1978} and 
Colzani, Giulini and Travaglini \cite{CGT1989}. Such restrictions are not surprising as they 
also appear naturally in the 
multiplier problems already on $\mathbb R^n$ with $n\geq 2$: while the characteristic function of the ball is not
a multiplier on $L^p(\mathbb R^n)$ for any $p\not=2$ \cite{Fefferman:ball-AM-1971}, it does become 
an $L^p$-multiplier on radial functions if and only if $2-\frac{2}{n+1}<p<2+\frac{2}{n-1}$,
see Herz \cite{Herz:PNAS-1954}.
We refer to Appendix \ref{APP:polyhedral} for further precise statements.


\section{$L^p$-$L^q$ boundedness of operators}
\label{SEC:multipliers}

In this section we use the Hausdorff-Young-Paley inequality in Theorem \ref{Cor:general_Paley_inequality}
to give a sufficient condition for the $L^p$-$L^q$ boundedness of Fourier multipliers on compact
homogeneous spaces. It extends the condition that was obtained by a different method in \cite{Nurs_Tleukh_Mult} on the circle
$\TT$. In the case of compact Lie groups, we extend the criterion for Fourier multipliers in a rather standard way, to derive a 
condition for the  $L^p$-$L^q$ boundednes of general operators, all for the range of indices
$1<p\leq 2\leq q<\infty$.

In the case of a compact Lie group $G$, the Fourier multipliers correspond to left-invariant operators, and these
can be characterised by the condition that their symbols do not depend on the space variable. 
Thus, we can write such operators $A$ in the form
\begin{equation}\label{EQ:FM}
	\widehat{Af}(\pi)=\sigma_A(\pi)\widehat{f}(\pi),
\end{equation}
with the symbol $\sigma_A(\pi)$ depending only on $\pi\in\Gh$. The H\"ormander-Mihlin type multiplier
theorem for such operators to be bounded on $L^p(G)$ for $1<p<\infty$ was obtained in 
\cite{Ruzhansky-Wirth-multipliers}.

Now, in the context of compact homogeneous spaces $G/K$ we still want to keep the formula
\eqref{EQ:FM} as the definition of Fourier multipliers, now for all $\pi\in\Gh_0$. Indeed, due to
properties of zeros of the Fourier coefficients, we have that both sides of \eqref{EQ:FM}
are zero for $\pi\not\in\Gh_0$. Also, for $\pi\in\Gh_0$, we have
$\widehat{f}(\pi)\in\Sigma(G/K)$ with the set $\Sigma(G/K)$ defined in \eqref{EQ:SigmaGK},
which means that 
$$\widehat{f}(\pi)_{ij}=\widehat{Af}(\pi)_{ij}=0\; \textrm{ for } \; i>k_\pi.$$
Therefore, we can assume that the symbol $\sigma_A$ of a Fourier multiplier $A$ on
$G/K$ satisfies
\begin{equation}\label{EQ:FM-hom}
\sigma_A(\pi)=0 \textrm{ for } \pi\not\in\Gh_0; \textrm{ and } 
\sigma_A(\pi)_{ij}=0 \textrm{ for } \pi\in\Gh_0, \textrm{ if } i>k_{\pi} \textrm{ or } j>k_\pi.
\end{equation}
Therefore, only the upper-left block in $\sigma_A(\pi)$ of the size $k_\pi\times k_\pi$ 
may be non-zero.
Thus, we will say that $A$ is a Fourier multiplier on $G/K$ if conditions \eqref{EQ:FM} and \eqref{EQ:FM-hom}
are satisfied.

\begin{thm}
\label{multiplier_upper_bound}
Let $1<p\leq 2\leq  q < \infty$ and suppose that $A$ is a Fourier multiplier on the compact
homogeneous space $G/K$. Then we have
\begin{eqnarray}
\label{upper_estimate_refined}
\|A\|_{L^p(G/K)\to L^q(G/K)}
\lesssim
\sup_{s>0}s
\left(
\sum
\limits_{
	\substack{
		\pi\in\Gh_0\\ \|\sigma_A(\pi)\|_{\op}>s
		     }
		}\dpi\kpi
\right)^{\frac1p-\frac1q}.
\end{eqnarray}
\end{thm}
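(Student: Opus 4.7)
The plan is to deduce the estimate from the Hausdorff-Young-Paley inequality (Theorem \ref{Cor:general_Paley_inequality}) combined with the Hausdorff-Young inequality \eqref{H-Y} on the target side, with a duality step to cover the full range of indices. I would first treat the subcase $\frac1p+\frac1q\ge 1$, equivalently $p\le q'$, which is exactly the range in which $b=q'$ is admissible in the HYP constraint $p\le b\le p'$. Since $2\le q<\infty$, duality applied to \eqref{H-Y} yields $\|Af\|_{L^q(G/K)}\lesssim \|\widehat{Af}\|_{\ell^{q'}(\Gh_0)}$. Writing $\widehat{Af}(\pi)=\sigma_A(\pi)\widehat{f}(\pi)$ and using the submultiplicativity $\|\sigma_A(\pi)\widehat{f}(\pi)\|_{\HS}\le \|\sigma_A(\pi)\|_{\op}\|\widehat{f}(\pi)\|_{\HS}$, this reduces the problem to estimating
\begin{equation*}
\Big(\sum_{\pi\in\Gh_0} \dpi\, k_\pi^{q'(1/q'-1/2)}\, \|\sigma_A(\pi)\|_{\op}^{q'}\,\|\widehat{f}(\pi)\|_{\HS}^{q'}\Big)^{1/q'}
\end{equation*}
by a constant multiple of the right-hand side of \eqref{upper_estimate_refined} times $\|f\|_{L^p(G/K)}$.

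Next, I would apply Theorem \ref{Cor:general_Paley_inequality} with $b=q'$ and the weight $\varphi(\pi)=\|\sigma_A(\pi)\|_{\op}^{r}$, where $\frac1r=\frac1p-\frac1q$. The algebraic identity $\frac1{q'}-\frac1{p'}=\frac1p-\frac1q=\frac1r$ makes $\varphi(\pi)^{1/q'-1/p'}=\|\sigma_A(\pi)\|_{\op}$, so HYP returns exactly the displayed $\ell^{q'}$-sum on its left-hand side and $M_\varphi^{1/p-1/q}\|f\|_{L^p(G/K)}$ on its right. A change of variable $s=t^{1/r}$ inside the supremum defining $M_\varphi$ converts it into
\begin{equation*}
M_\varphi^{1/p-1/q}=\sup_{s>0} s\Big(\sum_{\pi\in\Gh_0,\ \|\sigma_A(\pi)\|_{\op}\ge s}\dpi\, k_\pi\Big)^{1/p-1/q},
\end{equation*}
which is precisely the quantity appearing in \eqref{upper_estimate_refined}.

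For the complementary range $\frac1p+\frac1q<1$ I would argue by duality. The adjoint $A^*$ is still a Fourier multiplier on $G/K$ whose symbol is $\sigma_A(\pi)^*$, preserving the block pattern \eqref{EQ:FM-hom} and with $\|\sigma_{A^*}(\pi)\|_{\op}=\|\sigma_A(\pi)\|_{\op}$. The dual pair $(q',p')$ satisfies $1<q'\le 2\le p'<\infty$ and $\frac1{q'}+\frac1{p'}=2-\frac1p-\frac1q>1$, so the previous step applies to $A^*\colon L^{q'}(G/K)\to L^{p'}(G/K)$. The identities $\|A\|_{L^p\to L^q}=\|A^*\|_{L^{q'}\to L^{p'}}$ and $\frac1{q'}-\frac1{p'}=\frac1p-\frac1q$ then transport the bound back to $A$ with the same level-set quantity.

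The main obstacle I anticipate is the window $p\le b\le p'$ built into Theorem \ref{Cor:general_Paley_inequality}: one must recognise that the natural choice $b=q'$ only lies in this window when $\frac1p+\frac1q\ge 1$, and arrange the duality step so that the weak-type expression governing \eqref{upper_estimate_refined} is preserved in the complementary range.
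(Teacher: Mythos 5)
Your proposal is correct and follows essentially the same route as the paper: reduce by duality to the case $p\le q'$ (so that $b=q'$ is admissible in the Hausdorff-Young-Paley window $p\le b\le p'$), bound $\|Af\|_{L^q}$ via the inverse Hausdorff-Young inequality by $\|\sigma_A\widehat f\|_{\ell^{q'}(\Gh_0)}$, apply Theorem~\ref{Cor:general_Paley_inequality} with $\varphi(\pi)=\|\sigma_A(\pi)\|_{\op}^{r}$ where $\frac1r=\frac1p-\frac1q$ (so that $\varphi^{1/q'-1/p'}=\|\sigma_A\|_{\op}$), and rescale the supremum defining $M_\varphi$ to obtain the stated level-set quantity. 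Your explicit observation that $A^*$ remains a Fourier multiplier with the conjugate-transpose symbol, same operator norm, and preserved block pattern is exactly the (unstated) justification behind the paper's remark that one may assume $p\le q'$.
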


We note that if $\mu(Q)=\sum_{\pi\in Q} d_\pi k_\pi$ denotes the Plancherel measure on $\Gh_0$, then
\eqref{upper_estimate_refined} can be rewritten as
$$
\|A\|_{L^p(G/K)\to L^q(G/K)}
\lesssim
\sup_{s>0} \left\{ s \mu(\pi\in\Gh_0: \, \|\sigma_A(\pi)\|_{\op}>s)^{\frac1p-\frac1q}\right\}.
$$

\begin{rem}
Inequality \eqref{upper_estimate_refined} is sharp for $p=q=2$.	
\end{rem}
\begin{proof} 
First, we have the estimate
$$
	\|A\|_{L^2(G/K)\to L^2(G/K)}
	\leq \sup_{\pi\in\Gh_0}\|\sigma_A(\pi)\|_{\op}.
$$
Since the set
$$
\{\pi\in\Gh_0\colon \|\sigma_A(\pi)\|_{\op}\geq s\}
$$
is empty for $s>\|A\|_{L^2(G/K)\to L^2(G/K)}$ and a sum over the empty set is set to be zero, 
we have by \eqref{upper_estimate_refined}
\begin{multline*}
\|A\|_{L^2(G/K)\to L^2(G/K)}
\leq
\sup_{s>0}s \left(\sum\limits_{\substack{\pi\in\Gh_0 \\ \|\sigma_A(\pi)\|_{\op}\geq s}}\dpi\kpi\right)^0
\\=\sup_{0<s\leq \|A\|_{L^2(G/K)\to L^2(G/K)}} s\cdot 1=\|A\|_{L^2(G/K)\to L^2(G/K)}.
\end{multline*}
Thus, for $p=q=2$ we attain equality in  \eqref{upper_estimate_refined}.
\end{proof}

\begin{proof}[Proof of Theorem \ref{multiplier_upper_bound}]
	
	Recall that $A$ is a Fourier multiplier on $G/K$, i.e. 
	$$
	\widehat{Af}(\pi)=\sigma_A(\pi)\widehat{f}(\pi),
	$$
    with $\sigma_A$ satisfying \eqref{EQ:FM-hom}.
    	Since the application of  \cite[p. 14, Theorem 4.2]{HLP} with $X=G/K$ and $\mu=\{\text{Haar measure on $G$}\}$ yields
	\begin{equation}
	\|A\|_{L^p(G/K)\to L^q(G/K)}
	=
	\|A^*\|_{L^{q'}(G/K)\to L^{p'}(G/K)},
	\end{equation}
	we may assume that $p\leq q'$, for otherwise we have $q'\leq(p')'=p$ and $\|\sigma_{A^*}(\pi)\|_{\op}=\|\sigma_{A}(\pi)\|_{\op}$. 
	When $f\in C^{\infty}(G/K)$ the Hausdorff-Young inequality gives, since $q'\leq 2$,
	$$
	\|Af\|_{L^q(G/K)}
	\lesssim
	\|\widehat{Af}\|_{\ell^{q'}(\Gh_0)}
	=
	\|\sigma_A\widehat{f}\|_{\ell^{q'}(\Gh_0)}.
	$$
	We set $\sigma(\pi):=\|\sigma_A(\pi)\|^r_{\op} I_{d_{\pi}}$. It is obvious that
	\begin{equation}
	\|\sigma(\pi)\|_{\op}=\|\sigma_A(\pi)\|^r_{\op}.
	\end{equation}
	Now, we are in a position to apply the Hausdorff-Young-Paley inequality in 
	Theorem \ref{Cor:general_Paley_inequality}.	
	With $\sigma(\pi)=\|\sigma_A\|^r I_{d_{\pi}}$ and $b=q'$, the assumption of Theorem 
	\ref{Cor:general_Paley_inequality} are then satisfied and since $\frac1{q'}-\frac1{p'}=\frac1p-\frac1q=\frac1r$, we obtain
	$$
	\|\sigma_A\widehat{f}\|_{\ell^{q'}(\Gh_0)}
	\lesssim
	\left(\sup_{s>0}s\sum\limits_{\substack{\pi\in\Gh_0 \\ \|\sigma(\pi)\|_{\op}\geq s}}\dpi\kpi\right)^{\frac1r}
	\|f\|_{L^p(G/K)},\quad f\in L^p(G/K).
	$$
Further, it can be easily checked that
	\begin{multline*}
	\left(
	\sup_{s>0}s\sum\limits_{
		\substack
		{
			\pi\in\Gh_0
			\\
			\|\sigma(\pi)\|_{\op}>s
		}
	}
\dpi\kpi
	\right)^{\frac1r}
	=
	\left(
	\sup_{s>0}s\sum\limits_{\substack{\pi\in\Gh_0\\\|\sigma_A(\pi)\|^{r}_{\op}>s}}\dpi\kpi
	\right)^{\frac1r}
	=
	\left(
	\sup_{s>0}s^{r}\sum\limits_{\substack{\pi\in\Gh\\\|\sigma_A(\pi)\|_{\op}>s}}\dpi\kpi
	\right)^{\frac1r}
	\\=
	\sup_{s>0}s \left(\sum\limits_{\substack{\pi\in\Gh\\\|\sigma_A(\pi)\|_{\op}>s}}\dpi\kpi\right)^{\frac1r}.
	\end{multline*}
This completes the proof.	
\end{proof}

A standard addition to the proof of the preceding theorem extends Theorem \ref{multiplier_upper_bound} 
to the non-invariant case.
For the simplicity in the formulation and in the understanding a variant of  \eqref{EQ:FM} in the non-invariant
case, the following result is given in the context of general compact Lie groups.
To fix the notation, we note that 
according to \cite[Theorem 10.4.4]{RT}
any linear continuous operator $A$ on $C^\infty(G)$ can be written in the form
$$Af(g)
	  	=
	  	\sum\limits_{\pi\in\Gh}
	  	\dpi\Tr\left(\pi(g) \sigma_A(g,\pi)\widehat{f}(\pi)\right)$$
for a symbol $\sigma_A$ that is well-defined on $G\times\Gh$ with values
$\sigma_A(g,\pi)\in\mathbb C^{\dpi\times\dpi}.$

\begin{thm} \label{THM:Lpq-G}
Let $1<p\leq 2 \leq q< \infty$.
Suppose that $l>\frac{p}{\dim(G)}$ is an integer. 
Let $A$ be a linear continuous operator on $C^\infty(G)$. 
Then we have
\begin{equation}\label{EQ:Lpq-G1}
\|A\|_{L^p(G)\to L^q(G)}
\lesssim
\sum\limits_{|\alpha|\leq l}
\sup_{u\in G}
\sup_{s>0}
s
\left(
\sum\limits_{\substack{\pi\in\Gh\\ \|\partial^{\alpha}_{u}\sigma_A(u,\pi)\|_{\op}\geq s}}\dpi\kpi
\right)^{\frac1p-\frac1q}.
\end{equation}	
\end{thm}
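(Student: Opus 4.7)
The plan is to reduce the non-invariant case to the Fourier multiplier case of Theorem \ref{multiplier_upper_bound} by freezing the first argument of the matrix symbol and then recovering the original operator via a Sobolev embedding on $G$. Specifically, for each fixed $u\in G$ let $A_u$ denote the left-invariant operator with Fourier multiplier symbol $\sigma_A(u,\pi)$, so that
\begin{equation*}
\widehat{A_u f}(\pi)=\sigma_A(u,\pi)\widehat{f}(\pi),\qquad Af(g)=(A_u f)(g)\big|_{u=g}.
\end{equation*}

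Next I would apply a Sobolev-type embedding in the $u$-variable on the compact manifold $G$: for an integer $l$ large enough (this is the role of the hypothesis on $l$, which ensures $W^{l,q}(G)\hookrightarrow L^\infty(G)$ in the $u$-variable), one has the pointwise bound
\begin{equation*}
|(A_g f)(g)|^q\lesssim \sum_{|\alpha|\leq l}\int_G |\partial^\alpha_u (A_u f)(g)|^q\,du,
\end{equation*}
which after integration in $g$ and Fubini becomes
\begin{equation*}
\|Af\|^q_{L^q(G)}\lesssim \sum_{|\alpha|\leq l}\int_G \|\partial^\alpha_u A_u f\|^q_{L^q(G)}\,du.
\end{equation*}

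The key observation is that for each fixed $u$ and each multi-index $\alpha$, the operator $\partial^\alpha_u A_u$ is itself a Fourier multiplier on $G$ with symbol $\partial^\alpha_u\sigma_A(u,\pi)$. Applying Theorem \ref{multiplier_upper_bound} in the special case $K=\{e\}$ (so $\Gh_0=\Gh$ and $\kpi=\dpi$) to each such multiplier, pulling the supremum in $u$ outside the integral (the Haar measure of $G$ is one), and finally bounding the finite $\ell^q$-sum in $\alpha$ by an $\ell^1$-sum, yields exactly \eqref{EQ:Lpq-G1}. The main obstacle will be the Sobolev embedding step: verifying that the $u$-derivatives legitimately commute with the Fourier series so that $\partial^\alpha_u A_u$ really is the Fourier multiplier with symbol $\partial^\alpha_u\sigma_A(u,\pi)$, and ensuring that the Sobolev embedding is applied in a way compatible with the $L^p\to L^q$ framework; once these are in place, everything else is a direct application of Theorem \ref{multiplier_upper_bound} together with standard Lebesgue-space manipulations.
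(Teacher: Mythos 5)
Your proposal is correct and follows essentially the same route as the paper's proof: freeze the first argument to get a family of Fourier multipliers $A_u$ with $Af(g)=(A_uf)(g)|_{u=g}$, control the diagonal restriction via a Sobolev embedding in the $u$-variable, use Fubini, note that $\partial^\alpha_u A_u$ is a Fourier multiplier with symbol $\partial^\alpha_u\sigma_A(u,\cdot)$, and invoke Theorem~\ref{multiplier_upper_bound}. The only minor caveat is that the Sobolev embedding requires $l>\dim(G)/q$ rather than the condition literally printed in the theorem statement, but your identification of the role of the hypothesis on $l$ matches the intent of the paper.
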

In other words, if the expression on the right hand side of \eqref{EQ:Lpq-G1} is finite, the operator
$A$ extends to a bounded operator from $L^p(G)$ to $L^q(G)$. The derivatives $\partial_u^\alpha$
are derivatives with respect to a basis of left-invariant vector fields on the Lie algebra $\mathfrak g$ of
$G$.

\begin{proof}
	  Let us define
	  $$
	  	A_uf(g)
	  	:=
	  	\sum\limits_{\pi\in\Gh}
	  	\dpi\Tr\left(\pi(g) \sigma_A(u,\pi)\widehat{f}(\pi)\right)
	  $$
	  so that $A_gf(g)=Af$. Then
\begin{equation}
\|Af\|_{L^q(G)}
=
\left(
\int\limits_{G}
|Af(g)|^q\,dg
\right)^{\frac1q}
\leq
\left(
\int\limits_{G}
\sup_{u\in G}
|A_{u}f(g)|^q\,dg
\right)^{\frac1q}.
\end{equation}
By an application of the Sobolev embedding theorem we get
$$
  \sup_{u\in G} |A_u f(g)|^q \leq
  C \sum_{|\alpha|\leq l} 
  \int\limits_G 
  |\partial_u^\alpha A_u f(g)|^q
        \ dy.
$$
Therefore, using the
Fubini theorem to change the order of integration, we obtain
\begin{eqnarray*}
    \| Af \|_{L^q(G)}^q
  &\leq & C \sum_{|\alpha|\leq l} \int\limits_G \int\limits_G
        | \partial_u^\alpha A_u f(g) |^q
        \,dg\,du \\
  &\leq & C \sum_{|\alpha|\leq l} \sup_{u\in G} \int\limits_G
        | \partial_u^\alpha A_y f(g) |^p\,dg \\
  & = & C \sum_{|\alpha|\leq l} \sup_{u\in G}
        \| \partial_u^\alpha A_u f \|_{L^q(G)}^q \\
  &\leq & C \sum_{|\alpha|\leq l} \sup_{u \in G}
        \| f\mapsto {\rm Op}(\partial^{\alpha}_u\sigma_A)f\|_
        {{\mathcal L}(L^p(G)\to L^q(G))}^q
        \|f\|_{L^p(G)}^q \\
  &\lesssim & 
 \left[
  \sum\limits_{|\alpha|\leq l}
\sup_{u\in G}
\sup_{s>0}
s
\left(
\sum\limits_{\substack{\pi\in\Gh\\ \|\partial^{\alpha}_{u}\sigma_A(u,\pi)\|_{\op}\geq s}}d^2_{\pi}
\right)^{\frac1p-\frac1q}
\right]^q
          \|f\|_{L^p(G)}^q,
\end{eqnarray*}
where the last inequality holds due to 
Theorem \ref{multiplier_upper_bound}.
This completes the proof.
\end{proof}

\section{Proofs}
\label{SEC:proofs}

In this section we prove results stated in the previous section.
We start by proving the Paley inequality in Theorem \ref{THM:Paley_inequality} and then use it to deduce the Hardy-Littlewood Theorem
 \ref{HL-groups-1}.
\subsection{Proof of Theorem \ref{THM:Paley_inequality}}
\begin{proof}[Proof of Theorem \ref{THM:Paley_inequality}]

Let $\nu$ give measure $\varphi^2(\pi)\dpi k_\pi$ to the set consisting of the single point $\{\pi\}, \pi\in\Gh_0$, 
 i.e.
$$
\nu(\{\pi\}):=\varphi^2(\pi)\dpi\kpi.
$$
We define the corresponding space $L^p(\Gh_0,\nu)$, $1\leq p<\infty$, 
as the space of complex (or real) sequences
$a=\{a_{\pi}\}_{\pi\in\Gh_0}$ such that
\begin{equation}\label{EQ:Lpmu}
\|a\|_{L^p(\Gh_0,\nu)}
:=
\left(
\sum\limits_{\substack{\pi\in\Gh_0}}
|a_{\pi}|^p
\varphi^2(\pi)\dpi\kpi
\right)^{\frac1p}
<\infty.
\end{equation}
We will show that the sub-linear operator
$$
A\colon 
L^p(G/K)\ni f  \mapsto Af
=
\left\{
\frac{
\|\widehat{f}(\pi)\|_{\HS}
}{\sqrt{\kpi}
\varphi(\pi)}
\right\}_{\pi\in\Gh_0}\in L^p(\Gh_0,\nu)
$$
is well-defined and bounded from $L^p(G/K)$ to $L^p(\Gh_0,\nu)$ for $1<p\leq 2$. 
In other words, we claim that we have the estimate
\begin{multline}
\label{Paley_inequality_alt}
\|Af\|_{L^p(\Gh_0,\nu)}
=
\left(
\sum\limits_{\substack{\pi\in\Gh_0}}
\left(
\frac{\|\widehat{f}(\pi)\|_{\HS}}{\sqrt{\kpi}\varphi(\pi)}
\right)^p
\varphi^2(\pi)\dpi\kpi
\right)^{\frac1p}
\lesssim
N^{\frac{2-p}{p}}_{\varphi}
\|f\|_{L^p(G/K)},
\end{multline}
which would give \eqref{EQ:Paley_inequality}
and where we set $N_{\varphi}:=\sup_{t>0}t\sum\limits_{\substack{\pi\in\Gh_0\\ \varphi(\pi)\geq t}}\dpi\kpi$. 
We will show that $A$ is of weak type (2,2) and of weak-type (1,1). For definition and discussions we refer to Section \ref{SEC:Marc_Interpol_Theorem} where we give definitions of weak-type, formulate and prove Marcinkiewicz-type 
interpolation Theorem \ref{THM:Marc-Interpol-Lattice} to be used in the present setting.
More precisely, with the distribution function $\nu$ as in Theorem \ref{THM:Marc-Interpol-Lattice},
we show that
\begin{eqnarray}
\label{EQ:THM:Paley_inequality_weak_1}
\nu_{\Gh_0}(y;Af)
&\leq &
\left(\frac{M_2\|f\|_{L^2(G/K)}}{y}\right)^2  \quad\text{with norm } M_2 = 1,\\
\label{EQ:THM:Paley_inequality_weak_2}
\nu_{\Gh_0}(y;Af)
&\leq &
\frac{M_1\|f\|_{L^1(G/K)}}{y}  \qquad\text{with norm } M_1 = M_{\varphi}, 
\end{eqnarray}
where $\nu_{\Gh_0}$ is defined in the Appendix in \eqref{EQ:density2}.
Then \eqref{Paley_inequality_alt} would follow by Marcin\-kiewicz interpolation theorem
(Theorem \ref{THM:Marc-Interpol-Lattice} from Section \ref{SEC:Marc_Interpol_Theorem}) 
with $\Gamma=\Gh_0$ and $\delta_{\pi}=\dpi,\kappa_{\pi}=\kpi$.

Now, to show \eqref{EQ:THM:Paley_inequality_weak_1}, using Plancherel's identity \eqref{plancherel},
we get
\begin{multline*}
y^2
\nu_{\Gh_0}(y;Af)
\leq
\|Af\|^2_{L^p(\Gh_0,\nu)}
=
\sum\limits_{\substack{\pi\in \Gh_0}}
\dpi\kpi
\left(
\frac{\|\widehat{f}(\pi)\|_{\HS}}{\sqrt{\kpi}\varphi(\pi)}
\right)^2
\varphi^2(\pi)
\\=
\sum\limits_{\substack{\pi\in\Gh_0}}
d_{\pi}
\|\widehat{f}(\pi)\|^2_{\HS}
=
\|\widehat{f}\|^2_{\ell^2(\Gh_0)}
=
\|f\|^2_{L^2(G/K)}.
\end{multline*}
Thus, $A$ is of type (2,2) with norm $M_2\leq1$.
Further, we show that $A$ is of weak-type (1,1) with norm $M_1=M_{\varphi}$; more precisely, we show that
\begin{equation}
\label{weak_type}
\nu_{\Gh_0}\{\pi\in\Gh_0 \colon \frac{\|\widehat{f}(\pi)\|_{\HS}}{\sqrt{\kpi}\varphi(\pi)} > y\}
\lesssim
M_{\varphi}\,
\dfrac{\|f\|_{L^1(G/K)}}{y}.
\end{equation}
The left-hand side here is the weighted sum $\sum \varphi^2(\pi)\dpi\kpi$ taken over those $\pi\in\Gh_0$ for which $\dfrac{\|\widehat{f}(\pi)\|_{\HS}}{\sqrt{\kpi}\varphi(\pi)}>y$. From the definition of the Fourier transform  it follows that
$$
\|\widehat{f}(\pi)\|_{\HS}\leq\sqrt{\kpi} \|f\|_{L^1(G/K)}.
$$
Therefore, we have
$$
y<\frac{\|\widehat{f}(\pi)\|_{\HS}}{\sqrt{\kpi}\varphi(\pi)}
\leq
\frac{\|f\|_{L^1(G/K)}}{\varphi(\pi)}.
$$
Using this, we get
$$
\left\{
\pi\in\Gh_0\colon 
\frac{\|\widehat{f}(\pi)\|_{\HS}}{\sqrt{\kpi}\varphi(\pi)}>y
\right\}
\subset
\left\{
\pi\in\Gh_0\colon 
\frac{\|f\|_{L^1(G/K)}}{\varphi(\pi)}>y
\right\}
$$
for any $y>0$. Consequently,
$$
\nu\left\{
\pi\in\Gh_0\colon 
\frac{\|\widehat{f}(\pi)\|_{\HS}}{\sqrt{\kpi}\varphi(\pi)}>y
\right\}
\leq
\nu\left\{
\pi\in\Gh_0\colon 
\frac{\|f\|_{L^1(G/K)}}{\varphi(\pi)}>y
\right\}.
$$
Setting $v:=\frac{\|f\|_{L^1(G/K)}}{y}$, we get

\begin{equation}
\label{PI_intermed_est_1}
\nu\left\{
\pi\in\Gh_0\colon 
\frac{\|\widehat{f}(\pi)\|_{\HS}}{\sqrt{\kpi}\varphi(\pi)}>y
\right\}
\leq
\sum\limits_{\substack{\pi\in\Gh_0 \\ \varphi(\pi)\leq v}}
\varphi^2(\pi)\dpi\kpi.
\end{equation}
We claim that
\begin{equation}\label{EQ:aux1}
\sum\limits_{\substack{\pi\in\Gh_0 \\ \varphi(\pi)\leq v}}
\varphi^2(\pi)\dpi\kpi
\lesssim 
M_{\varphi}
v.
\end{equation}
In fact, we have
$$
\sum\limits_{\substack{\pi\in\Gh_0 \\ \varphi(\pi)\leq v}}
\varphi^2(\pi)\dpi\kpi
=
\sum\limits_{\substack{\pi\in\Gh_0 \\ \varphi(\pi)\leq v}}
\dpi\kpi\int\limits^{\varphi^2(\pi)}_0 d\tau.
$$
We can interchange sum and integration to get
$$
\sum\limits_{\substack{\pi\in\Gh_0 \\ \varphi(\pi)\leq v}}
\dpi\kpi\int\limits^{\varphi^2(\pi)}_0 d\tau
=
\int\limits^{v^2}_0 d\tau 
\sum
\limits_{\substack{\pi\in\Gh_0 \\ \tau^{\frac12}\leq \varphi(\pi)\leq v}}
\dpi\kpi.
$$
Further, we make a substitution $\tau=t^2$, yielding
\begin{equation*}
\int\limits^{v^2}_0 d\tau \sum\limits_{\substack{\pi\in\Gh_0 \\ 
		\tau^{\frac12}\leq \varphi(\pi)\leq v}}\dpi\kpi
=
2\int\limits^{v}_0 t\,dt 
\sum\limits_{\substack{\pi\in\Gh_0 \\ t \leq \varphi(\pi)\leq v}}\dpi\kpi
\leq
2\int\limits^{v}_0 t\,dt 
\sum\limits_{\substack{\pi\in\Gh_0 \\ t \leq \varphi(\pi)}}\dpi\kpi.
\end{equation*}
Since 
$$
t\sum\limits_{\substack{\pi\in\Gh_0 \\ t \leq \varphi(\pi) } } \dpi\kpi
\leq 
\sup_{t>0}	t\sum\limits_{\substack{\pi\in\Gh_0 \\ t \leq \varphi(\pi) } } \dpi\kpi
=M_{\varphi}
$$
is finite by the assumption that $M_{\varphi}<\infty$, we have
$$
2\int\limits^{v}_0 t\,dt 
\sum\limits_{\substack{\pi\in\Gh_0 \\ t \leq \varphi(\pi)}}\dpi\kpi
\lesssim M_{\varphi} v.
$$
This proves \eqref{EQ:aux1}.
Thus, we have proved inequalities
\eqref{EQ:THM:Paley_inequality_weak_1},
\eqref{EQ:THM:Paley_inequality_weak_2}.
Then by using the Marcinkiewicz interpolation theorem (Theorem \ref{THM:Marc-Interpol-Lattice} from 
Section \ref{SEC:Marc_Interpol_Theorem})  with $p_1=1, p_2=2$ and 
$\frac1p=1-\theta+\frac{\theta}2$ we now obtain
$$
\left(
\sum\limits_{\substack{\pi\in\Gh_0}}
\left(
\frac{\|\widehat{f}(\pi)\|_{\HS}}{\varphi(\pi)}
\right)^p
\varphi^2(\pi)\dpi\kpi
\right)^{\frac1p}
 =
\|Af\|_{L^p(\Gh_0,\mu)}
\lesssim
M^{\frac{2-p}{p}}_{\varphi}
\|f\|_{L^p(G/K)}.
$$
This completes the proof.
\end{proof}

We now prove the Hardy-Littlewood Theorem \ref{HL-groups-1}.
\subsection{Proof of Theorem \ref{HL-groups-1}}
\begin{proof}[Proof of Theorem \ref{HL-groups-1}]
	\label{PROOF:HL-groups-1}
The second part of Theorem \ref{HL-groups-1}
follows from the first by duality, so we will concentrate on proving the first part.

Denote by $N(L)$ the eigenvalue counting function of eigenvalues (counted with multiplicities) of the first order elliptic pseudo-differential operator $(I-\Delta_{G/K})^{\frac12}$  on the compact manifold $G/K$, i.e.
\begin{equation}
\label{EQ:eigenvalue_counting_function}
	N(L)
	:=\sum\limits_{\substack{\pi\in\Gh_0\\ \jp{\pi}\leq L}}\dpi\kpi.
\end{equation}

Using the eigenvalue counting function $N(L)$, we can reformulate condition \eqref{EQ:weak_symbol_estimate} for $\varphi(\pi)=\jp\pi^{-n}$ in the following form
\begin{equation}
\label{EQ:claim}
	\sup_{0<u<+\infty}u N(u^{-\frac1n})<\infty.
\end{equation}
Since $N(L)$ is a right-continuous monotone function, the set of discontinuity points on $(0,+\infty)$ is at most countable. Therefore, without loss of generality, we can assume that $\psi(u)=uN\left(\left(\frac1u\right)^{\frac1n}\right)$ is a continuous function on $(0,+\infty)$. It is clear that $\lim_{u\to +\infty}\psi(u)=0$.
Further, we use the asymptotic of the Weyl eigenvalue counting function $N(L)$ for the first order elliptic
pseudo-differential operator $(1-\Delta_{G/K})^{1/2}$ on the compact manifold
$G/K$, to get that the eigenvalue counting function $N(L)$ (see e.g. Shubin \cite{Shubin:BK-1987})
satisfies
\begin{equation}\label{EQ:Weyl}
N(L)=\sum\limits_{\substack{\pi\in\widehat{G}_{0}\\\langle\pi\rangle\leq L}}\dpi\kpi
\cong
L^n\quad \text{for large $L$.}
\end{equation}
With $L=\left(\frac1u\right)^{\frac1n}$ and $n=\dim G/K$, this implies
$$
\lim_{u\to 0}\psi(u)=\lim uN\left(\left(\frac1{u}\right)^{\frac1n}\right)=\lim_{u\to 0} u \left(\frac1{u^{\frac1n}}\right)^{n}=\lim_{u\to 0} 1 = 1.
$$
Thus, we showed that $\psi(u)$ is a bounded function on $(0,+\infty)$, or equivalently, we established \eqref{EQ:claim}.
Then, it is clear that $\varphi(\pi)=\jp\pi^{-n}$ satisfies condition \eqref{EQ:weak_symbol_estimate}. The application of the Paley inequality from Theorem \ref{THM:Paley_inequality} yields the Hardy-Littlewood inequality. This completes the proof.
\end{proof}
\subsection{Proof of Theorem \ref{THM:integrability-criterion-1-p-2-SU2}}

\begin{proof}[Proof of Theorem \ref{THM:integrability-criterion-1-p-2-SU2}]
	In  view of Theorem \ref{HL-groups-1}, it is sufficient to prove the converse inequality, i.e.
\begin{equation}
\|f\|^p_{L^p_*(\SU2)}
\lesssim
\sum\limits_{\substack{l\in\dualSU2}}
(2l+1)^{\frac{5p}2-4}
\|\widehat{f}(l)\|^p_{\HS}.	
\end{equation}
We will first prove that there is $C>0$ such that for any $\xi\in\dualSU2$ we have
\begin{equation}
\label{EQ:integrability-criterion-1-p-2-proof-aux-1}
|f(u)|
\leq C
 \frac{1}{(2\xi+1)^2}
 	\frac1{(\sin\pi \frac{t}2)^2}
 	\left|
 	\sum\limits_{\substack{l\in\dualSU2 \\ l\leq \xi}}(2l+1)\Tr\widehat{f}(l)
 	\right|,
\end{equation}
where 
\begin{equation}\label{EQ:SU2-1}
u(t,\theta,\psi)=
\left(
  \begin{array}{cc}
    \cos(\frac{\theta}2)e^{i(2\pi t+\psi)/2} & i\sin(\frac{\theta}2)e^{i(2\pi t-\psi)/2} \\ 
    i\sin(\frac{\theta}2)e^{-i(2\pi t-\psi)/2}  &\cos(\frac{\theta}2)e^{-i(2\pi t+\psi)/2}   \\ 
  \end{array}
 \right)
\end{equation}
is a parameterisation of $\SU2$,
and the coordinates $(t,\theta,\psi)$ vary in the parameter ranges
\begin{equation}\label{EQ:SU2-2}
0\leq t<1,\quad 0\leq \theta \leq \pi,\quad -2\pi\leq\psi\leq2\pi.
\end{equation}
We refer to \cite{Ruzhansky+Turunen-IMRN} or \cite{RT} for the general discussion of the Euler angles in this setting.
We also note that due to the assumption that the Fourier coefficients are monotone, they are nonnegative and decreasing, so
the modulus on the right hand side of \eqref{EQ:integrability-criterion-1-p-2-proof-aux-1} can be actually dropped.

We fix an arbitrary half-integer $\xi\in\dualSU2$ and let $k$ be any half-integer greater than $\xi$, i.e. $k\geq \xi$,
$k\in\dualSU2$. Then we have
\begin{multline}
\label{EQ:Akylzhanov_1_dual_proof_aux_3}
\left|
\sum\limits_{\substack{l\in\dualSU2\\ l\leq k}}(2l+1)\Tr[\widehat{f}(l)T^l(u)]
\right|
\leq \\
\left|
\sum\limits_{\substack{l\in\dualSU2\\ l\leq \xi}}(2l+1)\Tr[\widehat{f}(l)T^l(u)]
\right|
+
\left|
\sum\limits_{\substack{l\in\dualSU2\\ \xi < l \leq k}}(2l+1)\Tr[\widehat{f}(l)T^l(u)]
\right|.
\end{multline}
Since $\widehat{f}(k)$ is an almost scalar sequence of the Fourier coefficients, we have
$$
\Tr[\widehat{f}(k)T^k(u)]
\cong
\widehat{f}_k\Tr T^k(u).
$$
Thus
$$
\left|
\sum\limits_{\substack{l\in\dualSU2\\ l\leq \xi}}(2l+1)\Tr[\widehat{f}(l)T^l(u)]
\right|
\leq
\sum\limits_{\substack{l\in\dualSU2\\ l\leq \xi}}(2l+1)|\widehat{f}_l||\Tr T^l(u)|.
$$
Since matrices $T^l(u)$ are unitary of size $(2l+1)\times (2l+1)$, we have
$$
	\left|\Tr T^l(u)\right|
\leq (2l+1).
$$
Therefore
$$
\sum\limits_{\substack{l\in\dualSU2\\ l\leq \xi}}(2l+1)|\widehat{f}_l||\Tr T^l(u)|
\leq
\sum\limits_{\substack{l\in\dualSU2\\ l\leq \xi}}(2l+1)^2|\widehat{f}_l|.
$$
Applying the Abel transform to $\widehat{f}_l$ and $(2l+1)\Tr[T^l(u)]$ in the second term in the sum in \eqref{EQ:Akylzhanov_1_dual_proof_aux_3}, we get
$$
\sum\limits_{\substack{l\in\dualSU2 \\ \xi\leq l\leq k}}(2l+1)\widehat{f}_l\Tr[T^l(u)]
=
\sum\limits_{\substack{l\in\dualSU2 \\ \xi\leq l\leq k-\frac12}}(\widehat{f}_{l}-\widehat{f}_{l+\frac12})D_{l}(t)
+
\widehat{f}_{k}D_{k}(t)
-
\widehat{f}_{\xi}D_{\xi-\frac12}(t)
,
$$
where $D_{k}(t)=\sum\limits_{\substack{l\in\dualSU2\\l\leq k}}(2l+1)\Tr T^l(u)$.
We will now use the estimate \eqref{EQ:DK_estimate_SU2} for the Dirichlet kernel 
from Proposition \ref{PROP:DK_estimate} that we 
postpone to be proved later. Thus, we first estimate
\begin{multline*}
	\left|
	\sum\limits_{\substack{l\in\dualSU2 \\ \xi\leq l\leq k}}(2l+1)\Tr[\widehat{f}(l)T^l(u)]
	\right|
\leq
	\left|
	\sum\limits_{\substack{l\in\dualSU2\\\xi\leq l\leq k-\frac12}}(\widehat{f}_{l}-\widehat{f}_{l+\frac12})D_{l}(t)
	\right|
+
	\left|
	\widehat{f}_{k}D_{k}(t)
	\right|
\\+
	\left|
	\widehat{f}_{\xi}D_{\xi-\frac12}(t)
	\right|
\leq
	\sum\limits_{\substack{l\in\dualSU2\\ \xi\leq l\leq k-\frac12}}\left|\widehat{f}_{l}-\widehat{f}_{l+\frac12}\right|\left|D_{l}(t)\right|
+
\left|
	\widehat{f}_{k}\right| \left| D_{k}(t)
	\right|
	+
	\left|
	\widehat{f}_{\xi}\right|
	\left|
	D_{\xi-\frac12}(t)
	\right|
\end{multline*}

Using estimate \eqref{EQ:DK_estimate_SU2} for the Dirichlet kernel
and monotonicity of $(2k+1)\widehat{f}_k$ we can estimate this as

\begin{multline*}	
\lesssim
	\frac1{t^2}
\left(
	\sum\limits_{\substack{l\in\dualSU2\\ \xi\leq l\leq k-\frac12}}[(2l+2)\widehat{f}_{l}-(2l+2)\widehat{f}_{l+\frac12}]
+ 
	(2k+1)\widehat{f}_{k}
+ 
	2\xi\widehat{f}_{\xi}
\right)
\\=
	\frac1{t^2}
\left(
	\sum\limits_{\substack{l\in\dualSU2\\ \xi\leq l\leq k-\frac12}}
	[(2l+1)\widehat{f}_{l}-(2l+2)\widehat{f}_{l+\frac12}]
+
	\sum\limits_{\substack{l\in\dualSU2\\ \xi\leq l\leq k-\frac12}}
	\widehat{f}_l
+ 
	(2k+1)\widehat{f}_{k}
+ 
	2\xi\widehat{f}_{\xi}
\right)
\\ \lesssim
	\frac1{t^2}
\left(
	(2\xi+1)\widehat{f}_{\xi}
	-
	(2k+1)\widehat{f}_{k}
+ 
	\sum\limits_{\substack{l\in\dualSU2\\ \xi\leq l\leq k-\frac12}}
	\widehat{f}_l
+
	(2k+1)\widehat{f}_{k}
+ 
	2\xi\widehat{f}_{\xi}
\right)
\\\lesssim
\frac1{t^2}
	(2\xi+1)\widehat{f}_{\xi},
\end{multline*}
where the sum in the last line is finite even as $k\to\infty$ in view of the non-oscillating assumption 
\eqref{condition_on_dim}, namely, since
$$
\sum\limits_{\substack{l\in\dualSU2\\ \xi\leq l\leq k-\frac12}}
	\widehat{f}_l\leq \sum\limits_{\substack{l\in\frac12\NN_0 \\ l\geq \xi}}(d_{l}-d_{l+1})\widehat{f}_l
	<
	(2\xi+1)\widehat{f}_{\xi}.
$$
Collecting these estimates, we get
\begin{multline*}
	\left|
	\sum\limits_{\substack{l\in\dualSU2\\ l\leq k}}(2l+1)\Tr[\widehat{f}(l)T^l(u)]
	\right|
\leq
	\sum\limits_{\substack{l\in\dualSU2\\ l\leq \xi}}(2l+1)^2\widehat{f}_l
+
	\frac{(2\xi+1)\widehat{f}_{\xi}}{t^2}
\\=
	\sum\limits_{\substack{l\in\dualSU2\\ l\leq \xi}}(2l+1)^2\widehat{f}_l
	+
	(2\xi+1)^3\widehat{f}_{\xi}\frac{(2\xi+1)}{(2\xi+1)^3}\frac1{t^2}.
\end{multline*}
By Theorem \ref{THM:pointwise-convergence-G} the partial sums $
	\sum\limits_{\substack{l\in\dualSU2\\ l\leq k}}(2l+1)\Tr[\widehat{f}(l)T^l(u)]
$
converge to $f(x)$ for almost all $x\in G$.
Then taking the limit as $k\to \infty$, we get
$$
|f(u)|
\lesssim
	\sum\limits_{\substack{l\in\dualSU2\\ l\leq \xi}}(2l+1)^2\widehat{f}_l
	+
	(2\xi+1)^3\widehat{f}_{\xi}\frac{(2\xi+1)}{(2\xi+1)^3}\frac1{t^2}.	
$$
We assumed that $(2l+1)\widehat{f}_l$ is a monotone sequence. Then $\widehat{f}_k$ is also a monotone 
decreasing sequence. Therefore, we get 
$$
	(2\xi+1)^3\widehat{f}_{\xi}
\leq 
	\sum\limits_{\substack{l\in\dualSU2\\ l\leq \xi}}(2l+1)^2\widehat{f}_l.
$$
Thus
\begin{multline*}
	\sum\limits_{\substack{l\in\dualSU2\\ l\leq \xi}}(2l+1)^2\widehat{f}_l
	+
	(2\xi+1)^3\widehat{f}_{\xi}\frac{2\xi+1}{(2\xi+1)^3}\frac1{t^2}
\leq
	\left(1+\frac{1}{(2\xi+1)^2}\frac1{t^2}\right)
	\sum\limits_{\substack{l\in\dualSU2\\ l\leq \xi}}(2l+1)^2\widehat{f}_l
\\\lesssim
\frac{1}{(2\xi+1)^2}\frac1{t^2}
	\sum\limits_{\substack{l\in\dualSU2\\ l\leq \xi}}(2l+1)^2\widehat{f}_l.
\end{multline*}
Since $\widehat{f}_l$ is {\it almost scalar}, by Definition \ref{DEF:almost_scalar}, the last sum equals to
$$
\frac{1}{(2\xi+1)^2}\frac1{t^2}
\sum\limits_{\substack{l\in\dualSU2\\ l\leq \xi}}(2l+1)\Tr \widehat{f}(l).
$$
Finally, we obtain
\begin{equation}\label{EQ:aux1}
|f(u)|
\lesssim
\frac{1}{(2\xi+1)^2}\frac1{t^2}
\sum\limits_{\substack{l\in\dualSU2\\ l\leq \xi}}(2l+1)\Tr \widehat{f}(l).
\end{equation}
This proves \eqref{EQ:integrability-criterion-1-p-2-proof-aux-1}.
Using this inequality and applying Weyl's integral formula for class functions (cf. e.g. Hall \cite{BrianHall2003}), 
we immediately get
\begin{multline*}
	\|f\|^p_{L^p(\SU2)}
=
	\int\limits_{[0,1]}
	|f(u)|^p\sin^2\frac{\pi t}2\,dt
\\
\lesssim
	\int\limits_{[0,1]}
	\left(
	\frac{1}{(2\xi+1)^2}\frac1{t^2}
	\sum\limits_{\substack{l\in\dualSU2\\ l\leq \xi}}(2l+1)\Tr \widehat{f}(l)	
	\right)^p
	\sin^2\frac{\pi t}2\,dt
.	
\end{multline*}
Here $\xi$ is an arbitrary fixed half-integer.
We split the interval $[0,1]$ as the union 
$[0,1]=\bigsqcup\limits_{\xi\in\dualSU2}[(2\xi+1+1)^{-1},(2\xi+1)^{-1}]$. 
Using the estimate with the corresponding $\xi$ in each interval of this decomposition, the last integral becomes
\begin{multline*}
\sum\limits_{\xi\in\dualSU2}
	\int\limits^{
				\frac1{(2\xi+1)}
				}_{
				\frac1{(2\xi+1+1)}
				}
	\left(
	\frac{1}{{(2\xi+1)}^2}\frac1{t^2}
	\sum\limits_{\substack{l\in\dualSU2\\ l\leq \xi}}
	(2l+1)\Tr\widehat{f}(l)
	\right)^p
	\sin^2\frac{\pi t}2\,dt
\\
\cong
\sum\limits_{\xi\in\dualSU2}
\int\limits^{
	\frac1{(2\xi+1)}
}_{
\frac1{(2\xi+1+1)
}}
\left(
\frac{1}{{(2\xi+1)}^2}\frac1{t^2}
\right)^p
\left(
\sum\limits_{\substack{l\in\dualSU2\\ l\leq \xi}}
(2l+1)\Tr\widehat{f}(l)
\right)^p
t^2\,dt.
\end{multline*}
Now, we notice that the inner sum $\sum\limits_{\substack{l\in\dualSU2\\ 2l+1\leq 2\xi+1}}
(2l+1)\Tr\widehat{f}(l)$ does not depend on $t$. Therefore, we can interchange summation and integration to get
\begin{multline*}
	\sum\limits_{\xi\in\dualSU2}
	\int\limits^{
		\frac1{(2\xi+1)}
	}_{
	\frac1{(2\xi+1+1)
	}}
	\left(
	\frac{1}{{(2\xi+1)}^2}\frac1{t^2}
	\right)^p
	\left(
		\sum\limits_{\substack{l\in\dualSU2\\ l\leq \xi}}
			(2l+1)\Tr\widehat{f}(l)				
	\right)^p
	t^2\,dt
\\	=
	\sum\limits_{\xi\in\dualSU2}
	\left(
	\frac{1}{{(2\xi+1)}^2}
	\right)^p
	\left(
	\sum\limits_{\substack{l\in\dualSU2\\ l\leq \xi}}
	(2l+1)\Tr\widehat{f}(l)		
	\right)^p
		\int\limits^{
			\frac1{(2\xi+1)}
		}_{
		\frac1{(2\xi+1+1)}}
		t^{2-2p}\,dt.
\end{multline*}
The key observation now is the fact that
$$
	\left(\frac{1}{(2\xi+1)^2}\right)^p
	\int\limits^{\frac1{(2\xi+1)}}_{\frac1{(2\xi+1+1)}}
	t^{2-2p}\,dt
\cong
(2\xi+1)^2(2\xi+1)^{3(p-2)}
\frac1{(2\xi+1)^{3p}}.
$$
Thus, the last sum, up to constant, equals to

$$
\sum\limits_{\xi\in\dualSU2}
(2\xi+1)^{-4}
\left(
\sum\limits_{\substack{l\in\dualSU2\\ l\leq \xi}}
(2l+1)\Tr\widehat{f}(l)
\right)^p.
$$
Thus, the last sum, up to constant, equals to
$$
\sum\limits_{\xi\in\dualSU2}
(2\xi+1)^2(2\xi+1)^{3(p-2)}
\left(
\frac1{(2\xi+1)^3}
\sum\limits_{\substack{l\in\dualSU2\\ 2l+1\leq 2\xi+1}}
(2l+1)\Tr\widehat{f}(l)
\right)^p.
$$
Now, we formulate and apply the following theorem proved by the authors in \cite{Npq:lattices}. 
Let $G$ be a compact Lie group and $\Gh$ its unitary dual. Let us denote by $\M_1$ the collection of all finite subsets $Q\subset\Gh$ of $\Gh$. Denote $\mu(Q)=\sum\limits_{\pi\in Q}d_\pi^2$ for $Q\in\M_1$.
\begin{thm}[\cite{Npq:lattices}] Let $1<p\leq 2$. Then we have
\begin{equation}
\label{necess_G-0}
\sum\limits_{\pi\in\Gh}d_\pi^2\jp\pi^{n(p-2)}\left(\sup_{\substack{Q\in\M_1\\\mu(Q)\geq \jp\pi^n}}\frac1{\mu(Q)}\left|\sum\limits_{\xi\in Q}d_{\xi}\Tr\widehat{f}(\xi)\right|\right)^p
=:
\|\widehat{f}\|_{N_{p',p}(\Gh,\M_1)}
\lesssim
\|f\|_{L^p(G)}.
\end{equation}
\end{thm}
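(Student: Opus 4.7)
My plan is to interpret the left-hand side of \eqref{necess_G-0} as a Lorentz-type (``net-space'') quasi-norm of $\phi(\xi):=d_\xi\Tr\widehat f(\xi)$ on the discrete dual $\Gh$ endowed with the Plancherel measure $\mu(\pi)=d_\pi^2$, and then to obtain the inequality by real interpolation of the sublinear map $T\colon f\mapsto\bar\phi$, where
\[
\bar\phi(t):=\sup_{\substack{Q\in\M_1\\ \mu(Q)\geq t}}\frac{1}{\mu(Q)}\Big|\sum_{\xi\in Q}d_\xi\Tr\widehat f(\xi)\Big|,\qquad t>0.
\]
The Weyl asymptotic \eqref{EQ:Weyl} $\sum_{\jp\pi\leq L}d_\pi^2\cong L^n$, together with the substitution $t=\jp\pi^n$, converts the discrete sum on the left of \eqref{necess_G-0} into the continuous Lorentz integral $\int_0^\infty t^{p-2}\bar\phi(t)^p\,dt$, up to constants depending only on $G$. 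Hence the desired estimate is equivalent to
\[
\Big(\int_0^\infty t^{p-2}\bar\phi(t)^p\,dt\Big)^{1/p}\lesssim\|f\|_{L^p(G)},\qquad 1<p\leq 2,
\]
and this I would prove by interpolating two explicit endpoint bounds.

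The endpoints are immediate. At $p=1$, the trivial bound $\|\widehat f(\xi)\|_{\op}\leq\|f\|_{L^1(G)}$ yields $|\Tr\widehat f(\xi)|\leq d_\xi\|f\|_{L^1(G)}$, so
\[
\sup_{t>0}\bar\phi(t)\leq\sup_{Q\in\M_1}\frac{1}{\mu(Q)}\sum_{\xi\in Q}d_\xi^2\|f\|_{L^1(G)}=\|f\|_{L^1(G)},
\]
which is the strong endpoint $T\colon L^1(G)\to L^\infty(dt)$. At $p=2$, I would introduce the Dirichlet-type kernel $D_Q(x):=\sum_{\xi\in Q}d_\xi\overline{\chi_\xi(x)}$, which by orthogonality of characters satisfies $\|D_Q\|_{L^2(G)}^2=\mu(Q)$ and $\sum_{\xi\in Q}d_\xi\Tr\widehat f(\xi)=\langle f,D_Q\rangle_{L^2(G)}$. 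The Cauchy--Schwarz inequality then gives
\[
\bar\phi(t)\leq\sup_{\mu(Q)\geq t}\frac{\|f\|_{L^2(G)}\mu(Q)^{1/2}}{\mu(Q)}\leq\|f\|_{L^2(G)}\,t^{-1/2},
\]
which is the weak endpoint $T\colon L^2(G)\to L^{2,\infty}(dt)$, i.e.\ $\sup_{t>0} t^{1/2}\bar\phi(t)\lesssim\|f\|_{L^2(G)}$.

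Finally I would apply Marcinkiewicz interpolation to the sublinear operator $T$. With $\theta=2/p'$ the identities $\frac{1}{p}=1-\frac{\theta}{2}$ and $\frac{1}{q}=\frac{\theta}{2}=\frac{1}{p'}$ produce $T\colon L^p(G)\to L^{p',p}(dt)$ for $1<p<2$. Since $\bar\phi$ is nonincreasing in $t$ by construction, it coincides with its own decreasing rearrangement, and the Lorentz norm simplifies to $\bigl(\int_0^\infty(t^{1/p'}\bar\phi(t))^p\,\tfrac{dt}{t}\bigr)^{1/p}=\bigl(\int_0^\infty t^{p-2}\bar\phi(t)^p\,dt\bigr)^{1/p}$ because $\frac{p}{p'}-1=p-2$, which is exactly the required target. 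The main obstacle is not the endpoint bounds, which are essentially free, but the passage between the discrete sum over $\Gh$ in \eqref{necess_G-0} and the continuous Lorentz norm: this relies in a quantitative way on the two-sided Weyl asymptotic \eqref{EQ:Weyl}, and one must verify that the monotonicity of $\bar\phi$ reduces the supremum over arbitrary $Q\in\M_1$ to a supremum over sublevel sets $\{\xi:\jp\xi\leq L\}$, thereby permitting the discrete Marcinkiewicz theorem of Appendix \ref{SEC:Marc_Interpol_Theorem} to be applied directly on the totally ordered set $\Gh$ (ordered by $\jp\pi$) with weights $d_\pi^2$.
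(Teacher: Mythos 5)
The paper does not actually prove this statement: it is imported verbatim from the companion preprint \cite{Npq:lattices}, so there is no internal proof to compare yours against. Judged on its own merits, your argument is sound and is exactly the kind of net-space/interpolation proof the cited reference is about, so it is a reasonable reconstruction. The two endpoint bounds are correct. For $p=1$: $|\Tr\widehat f(\xi)|\le d_\xi\|\widehat f(\xi)\|_{\op}\le d_\xi\|f\|_{L^1}$ gives $\sup_t\bar\phi(t)\le\|f\|_{L^1}$. For $p=2$: with $D_Q=\sum_{\xi\in Q}d_\xi\chi_\xi$ one has $\sum_{\xi\in Q}d_\xi\Tr\widehat f(\xi)=\langle f,D_Q\rangle$ (your formula has the conjugate in the wrong place, but since $\overline{\chi_\xi}=\chi_{\bar\xi}$ and conjugation permutes $\Gh$ this is immaterial), and orthogonality of characters gives $\|D_Q\|_{L^2}^2=\mu(Q)$, so Cauchy--Schwarz yields $\bar\phi(t)\le\|f\|_{L^2}t^{-1/2}$. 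These are precisely $T\colon L^1\to L^\infty$ and $T\colon L^2\to L^{2,\infty}$, and off-diagonal Marcinkiewicz (legitimate here since $1\le\infty$ and $2\le 2$) lands on $T\colon L^p\to L^{p',p}$ with second Lorentz index $p$; your index arithmetic $p/p'-1=p-2$ and the observation that $\bar\phi$ is already nonincreasing both check out.

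Two small points worth making explicit if you write this up. First, the passage from the continuous Lorentz quantity $\int_0^\infty t^{p-2}\bar\phi(t)^p\,dt$ back to the discrete sum $\sum_\pi d_\pi^2\jp\pi^{n(p-2)}\bar\phi(\jp\pi^n)^p$ only needs the \emph{one-sided} Weyl bound $N(L)\lesssim L^n$ (valid for all $L$, not just large $L$): since $t\mapsto t^{p-2}\bar\phi(t)^p$ is nonincreasing for $1<p\le 2$, the layer-cake formula together with $\nu((0,c))=N(c^{1/n})\lesssim c$ bounds the discrete sum by the continuous integral, which is all the inequality requires; the two-sided asymptotic \eqref{EQ:Weyl} is not strictly necessary in this direction. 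Second, your concluding remark about reducing the supremum over arbitrary $Q\in\M_1$ to sublevel sets is not needed for this proof: the endpoint bounds hold uniformly over all $Q\in\M_1$, so the supremum is handled before interpolation and no reduction is required. That reduction would matter only if one wanted the reverse inequality (lower bound by the net quantity over balls), which is not what is being claimed.
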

Here $N_{p',p}(\Gh,\M_1)$ is the net space on the lattice $\Gh$ which has been discussed in \cite{Npq:lattices}.
For an arbitrary collection of finite subsets $M$,  in view of the embedding
(cf. \cite{Npq:lattices})
\begin{equation}
N_{p',p}(\Gh,\M)
\hookrightarrow
N_{p',p}(\Gh,\M_1)
\end{equation}
and inequality \eqref{necess_G-0},
 we get
\begin{equation}
\label{derived-necess-G-0}
\sum\limits_{\pi\in\Gh}d_\pi^2\jp\pi^{n(p-2)}\left(\sup_{\substack{Q\in\M\\\mu(Q)\geq \jp\pi^n}}\frac1{\mu(Q)}\left|\sum\limits_{\xi\in Q}d_{\xi}\Tr\widehat{f}(\xi)\right|\right)^p
\leq
\|f\|_{L^p(G)}.
\end{equation}
In particular, for $G=\SU2$ and $\M=\{\{\xi\in\Gh\colon \jp\xi\leq\jp\pi\}\colon \pi\in\Gh\}$, we thus obtain from 
\eqref{derived-necess-G-0} that
\begin{multline*}
	\sum\limits_{\xi\in\dualSU2}
	(2\xi+1)^2(2\xi+1)^{3(p-2)}
\left(
	\frac1{(2\xi+1)^3}
\sum_{\substack{l\in\dualSU2 \\ (2l+1)^3\leq 2\xi+1}}(2l+1)
\Tr\widehat{f}(l)
\right)^p
\\
\leq
	\sum\limits_{\xi\in\dualSU2}
	(2\xi+1)^2(2\xi+1)^{3(p-2)}
\left(
\sup_{\substack{k\in\dualSU2 \\ (2k+1)^3 \geq (2\xi+1)^3}}
	\frac1{(2k+1)^3}
\sum_{\substack{l\in\dualSU2 \\ (2l+1)^3\leq (2k+1)^3}}(2l+1)
\Tr\widehat{f}(l)
\right)^p
\\\leq
\|f\|^p_{L^p(\SU2)}.
\end{multline*}
This completes the proof.
\end{proof}

\subsection{Dirichlet kernel on $\SU2$}
\label{SEC:DK_estimate}

In the proof of Theorem \ref{THM:integrability-criterion-1-p-2-SU2} we made use of an estimate for the 
Dirichlet kernel on $\SU2$ which we now prove. We continue with the SU(2)-notation introduced in
\eqref{EQ:SU2-1}--\eqref{EQ:SU2-2}.

\begin{prop}
\label{PROP:DK_estimate}
On $\SU2$, the Dirichlet kernel
$$
	D_{l}(t):=
	\sum\limits_{\substack{k\in\dualSU2 \\ k\leq l}}(2k+1)\chi_k(t)=
	\sum\limits_{\substack{k\in\dualSU2 \\ k\leq l}}(2k+1)\frac{\sin(2k+1)\pi t}{\sin \pi t},\quad l\in\frac12\NN_0,
$$	
satisfies the estimate
\begin{equation}
\label{EQ:DK_estimate_SU2}
|D_l(t)|
\lesssim
\frac{2l+1}{t^2},
\end{equation}
with a constant independent of $t$ and $l$.
\end{prop}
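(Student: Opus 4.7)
The plan is to reduce $D_l(t)$ to a weighted sum of sines and then apply Abel summation. Substituting $m = 2k+1$, so that $m$ ranges over the integers $1, 2, \ldots, N$ with $N := 2l+1$, the kernel becomes
\begin{equation*}
D_l(t) \;=\; \frac{1}{\sin \pi t} \sum_{m=1}^{N} m \sin(m \pi t).
\end{equation*}
Hence the task reduces to controlling the weighted sum in the numerator, after which the factorization $\sin \pi t = 2\sin(\pi t/2)\cos(\pi t/2)$ will convert each of the two ``small'' factors into a factor of $t$.

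The main tool is summation by parts with $a_m = \sin(m \pi t)$ and $b_m = m$. Letting $A_m(t) := \sum_{j=1}^{m} \sin(j \pi t)$, Abel summation yields
\begin{equation*}
\sum_{m=1}^{N} m \sin(m \pi t) \;=\; N \, A_N(t) \;-\; \sum_{m=1}^{N-1} A_m(t),
\end{equation*}
and the standard closed form
\begin{equation*}
A_m(t) \;=\; \frac{\sin(m \pi t/2)\,\sin((m+1)\pi t/2)}{\sin(\pi t/2)}
\end{equation*}
gives the uniform bound $|A_m(t)| \leq 1/|\sin(\pi t/2)|$. Combining these two displays immediately produces
\begin{equation*}
\biggl|\sum_{m=1}^{N} m \sin(m \pi t)\biggr| \;\leq\; \frac{2N}{|\sin(\pi t/2)|}.
\end{equation*}

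Dividing by $|\sin \pi t| = 2|\sin(\pi t/2)||\cos(\pi t/2)|$ and invoking the elementary inequality $\sin(\pi t/2) \geq t$ on $[0,1]$ (which follows from $\sin x \geq 2x/\pi$ on $[0,\pi/2]$), together with $\cos(\pi t/2) \gtrsim 1$ on the range of $t$ actually relevant in the proof of Theorem \ref{THM:integrability-criterion-1-p-2-SU2}, one concludes
\begin{equation*}
|D_l(t)| \;\leq\; \frac{N}{\sin^2(\pi t/2)\,\cos(\pi t/2)} \;\lesssim\; \frac{2l+1}{t^2},
\end{equation*}
as claimed. The whole argument is essentially one line of Abel summation once the substitution $m = 2k+1$ has been made, so there is no genuine obstacle; the only step demanding a little care is the uniform bound on the partial sums $A_m(t)$, which is precisely where the extra factor $1/\sin(\pi t/2)$ comes in and ultimately accounts for one of the two powers of $t$ in the denominator.
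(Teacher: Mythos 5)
Your proof is correct and takes a genuinely different route from the paper's. You perform a discrete summation by parts against the closed form $A_m(t)=\sum_{j\le m}\sin(j\pi t)=\frac{\sin(m\pi t/2)\sin((m+1)\pi t/2)}{\sin(\pi t/2)}$ and use the uniform bound $|A_m|\le 1/\sin(\pi t/2)$, whereas the paper writes $(2k+1)\sin((2k+1)\pi t)=-\frac{1}{\pi}\frac{d}{dt}\cos((2k+1)\pi t)$, obtains a closed form for $\sum_{k\le l}\cos((2k+1)\pi t)$ by a product-to-sum telescoping trick, and then differentiates that closed form, reading off the estimate from the two resulting terms with $\sin^{-3}\pi t$ and $\sin^{-2}\pi t$. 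Your Abel-summation argument is the discrete analogue of the paper's "differentiate the cosine kernel" idea, but it is purely algebraic, shorter, and avoids the (somewhat error-prone) derivative computation — it also makes the source of the two powers of $t$ more transparent (one from $A_m$, one from the division by $\sin\pi t$). One shared caveat: both arguments ultimately require $\sin\pi t\gtrsim t$ (you, via $\cos(\pi t/2)\gtrsim 1$; the paper, implicitly in its final estimate), which fails as $t\to 1$. In fact the bound \eqref{EQ:DK_estimate_SU2} as stated cannot hold uniformly on $(0,1)$: a direct computation gives $|D_l(1)|=\frac{(2l+1)(2l+2)}{2}\gg 2l+1$. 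You explicitly flag that you only need $t$ in the range actually used in Theorem \ref{THM:integrability-criterion-1-p-2-SU2}, which is an honest and correct qualification that the paper omits.
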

\begin{proof}
Since $\chi_k(t)=\Tr T^k(t)=\frac{\sin(2k+1)\pi t}{\sin \pi t}$, we have
$$
D_{l}(t)
=
\sum\limits_{\substack{k\in\dualSU2 \\ k\leq l}}(2k+1)\chi_k(t)
=
\sum\limits_{\substack{k\in\dualSU2 \\ k\leq l}}(2k+1)\frac{\sin(2k+1)\pi t}{\sin \pi t}.
$$
Using the fact that $\frac{d}{dt}\sin(2k+1)\pi t=(2k+1)\pi\cos(2k+1)\pi t$, we can represent the last sum as follows
\begin{multline*}
\frac1{\sin\pi t}
\sum\limits_{\substack{k\in\dualSU2 \\ k\leq l}}
(2k+1)\sin(2k+1)\pi t
=
\left(
\frac{-1}{\pi}
\right)
\frac1{ \sin\pi t}
\frac{d}{dt}
\left(
\sum\limits
_
{\substack{k\in\dualSU2 \\ k\leq l}}
\cos(2k+1)\pi t
\right)
\\=
\left(
\frac{-1}{\pi}
\right)
\frac1{\sin\pi t}
\frac{d}{dt}
\left(
\frac{
	\sum\limits
	_
	{\substack{k\in\dualSU2 \\ k\leq l}}
	\cos(2k+1)\pi t\sin \pi t}{\sin\pi t}
\right).
\end{multline*}
Using sine multiplication formula, we obtain
\begin{multline*}
\left(
\frac{-1}{\pi}
\right)
\frac1{\sin\pi t}
\frac{d}{dt}
\left(
\frac{\sum\limits_{\substack{k\in\dualSU2 \\ k\leq l}}
	\sin(2k+1+1)\pi t-\sin(2k+1-1)\pi t }{\sin\pi t}
\right)
\\=
\left(
\frac{-1}{\pi}
\right)
\frac1{\sin\pi t}
\frac{d}{dt}
\left(
\frac
{
\sin(2l+1)\pi t+\sin(2l+2)\pi t
}
{
\sin\pi t
}
\right)
\\=
\frac{
(
\sin(2l+1)\pi t+\sin(2l+2)\pi t
)
\cos(\pi t)
}{\sin^3\pi t}
\\
-\frac
{
(2l+1)\cos(2l+1)\pi t+(2l+2)\cos(2l+2)\pi t
}
{
\sin^2\pi t
}.
\end{multline*}

This proves \eqref{EQ:DK_estimate_SU2}.
\end{proof}
We can refer to Giulini and Travaglini \cite{Travaglini1980} and to Travaglini \cite{Travaglini1993} for some other interesting
properties of Fourier coefficients and Dirichlet kernels on $\SU2$.

\appendix
\section{Polyhedral summability on compact Lie groups}
\label{APP:polyhedral}

It has been shown by Stanton \cite{Sta1976} that for class functions on semisimple compact Lie groups
the polyhedral Fourier partial sums $S_N f$converge to $f$ in $L^p$ provided that 
$2-\frac1{s+1}<p<2+\frac1s$. Here the number $s$ depends on the root system
$\Rcal$ of the compact Lie group $G$, in the way we now describe.
We also note that the range of indices $p$ as above is sharp, see
Stanton and Tomas \cite{Stanton-Tomas:BAMS-1976,Stanton-Tomas:AJM-1978} as well as 
Colzani, Giulini and Travaglini \cite{CGT1989}. 
 
Let $G$ be a compact semisimple Lie group and let $T$ be a maximal torus of $G$, 
with Lie algebras $\mathfrak{g}$ and $\mathfrak{t}$, respectively. 
Let $n=\dim G$ and $l=\dim T=\rank G$. 
We define a positive definite inner product on $\mathfrak{t}$ by putting 
$(\cdot,\cdot)=-B(\cdot,\cdot)$, where $B$ is the Killing form.
Let $\Rcal$ be the set of roots of $\mathfrak{g}$. Choose in $\Rcal$ a system $\Rcal_+$ of positive roots (with cardinality $r$) and let $S=\{\alpha_1,\ldots,\alpha_{l}\}$ be the corresponding simple system. 
We define $\rho:=\frac12\sum\limits_{\alpha\in \Rcal_+}\alpha$.

For every 
$
\lambda\in{it}^*
$
there exists a unique 
$
H_{\lambda}\in\mathfrak{t}$ such that $\lambda(H)=i(H_{\lambda},H)
$
for every 
$H\in\mathfrak{t}$.
The vectors 
$
H_{j}=\frac{4\pi iH_{\alpha_j}}{\alpha_j(H_{\alpha_j})}
$ 
generate the lattice sometimes denoted by ${\rm Ker(exp)}$.
The elements of the set 
$$
\Lambda
=
\{
\lambda\in{it}^*\colon \lambda(H)\in 2\pi i\ZZ,\;\textrm{ for any }\; H\in {\rm Ker(exp)}
\}
$$ 
are called the weights of $G$ and the fundamental weights are defined by the relations 
$\lambda_j=2\pi i\delta_{jk}$, $j,k=1,\ldots,l$.
The subset 
$$
\mathfrak{D}
=
\{
\lambda\in\Lambda\colon \lambda=\sum^l_{j=1}m_j\lambda_j,\,m_j\in\NN
\}
$$
of the set $\Lambda$ with positive coordinates $m_j$ is called the set of dominant weights. Here, the word `dominant' means that with respect to a certain partial order on the set $\Lambda$ every weight $\lambda=\sum^l_{j=1}m_j\lambda_j$ with $m_j>0$ is maximal.
There exists a bijection between $\Gh$ and the semilattice $\mathfrak{D}$ of the dominant weights of $G$, i.e.
$$
\mathfrak{D}\ni\lambda=(m_1,\ldots,m_{l})\longleftrightarrow
\pi\in\Gh.
$$ 
Therefore, we will not distinguish between $\pi$ and the corresponding dominant weight $\lambda$ and will write 
\begin{equation}
\label{EQ:definition_explaination}
\pi=(\pi_1,\ldots,\pi_{l}),
\end{equation}
where we agree to set $\pi_i=m_i$.
With $\rho=\frac12\sum\limits_{\alpha\in \Rcal_+}\alpha$, for a natural number $N\in\NN$, we set
\begin{equation}\label{EQ:QN}
Q_{N}:=\{\xi\in\Gh \colon \; \xi_i\leq N\rho_i, \; i=1,\ldots,l\}.
\end{equation}
We call $Q_{N}$ a finite polyhedron of $N^{\rm th}$ order and denote by $\M_0$ the set of all finite polyhedrons in $\Gh$ or in $\Gh_0$.  

Now, fix an arbitrary fundamental weight $\lambda_j$, $j=1,\ldots,l$,
and set 
$
\Rcal_{\lambda_j}^{\perp}:=\{\alpha \in \Rcal_+ \colon (\alpha,\lambda_j)=0\},$
and $\Rcal_+=\Rcal_{\lambda_j}\oplus \Rcal_{\lambda_j}^{\perp}$. 
We will often use the number
\begin{equation}
\label{EQ:s}
s:=\max\limits_{\substack{j=1,\ldots,l}} \card \Rcal_{\lambda_j}.
\end{equation}
We denote by $L^{p}_{*}(G/K)$ the Banach subspace of $L^p(G/K)$ of functions on $G/K$ whose canonical liftings
are central on $G$: if $\tilde f(g)=f(gK)$ is the canonical lifting of $f$ from $G/K$ to $G$, by definition
$$
	f\in L^{p}_{*}(G/K) \text{ if and only if } f\in L^p(G/K) \text{ and } \tilde f(gug^{-1})=\tilde f(u) \quad \text{ for all } u,g\in G.
$$
We note that such functions have then $K$-invariance both on the right and on the left:
$\tilde f(K u K)=\tilde f(u)$ for all $u\in G$. Consequently, for $\pi\in\Gh_0$, 
with our choice of basis vectors for the invariant subspace of the representation space,
the Fourier coefficient $\widehat{f}(\pi)$ vanishes outside the upper-left $\kpi\times \kpi$ block, i.e.
$\widehat{f}(\pi)_{ij}=0$ if $i>\kpi$ or $j>\kpi$.

Further, we formulate and apply a result on semisimple Lie groups by Robert Stanton \cite{Sta1976} 
for $L^p$-norm convergence of polyhedral Fourier partial sums.
We also refer to Stanton and Tomas \cite{Stanton-Tomas:BAMS-1976,Stanton-Tomas:AJM-1978} and 
to Colzani, Giulini and Travaglini \cite{CGT1989} for the converse statement.

Let $\rho$ denote the half-sum of positive roots of $G$.  
Recall also the notation 
$Q_N:=\{\pi\in\Gh_0 \colon \pi_i \leq N\rho_i,\; i=1,\ldots,l\}$ and $D_N(u):=D_{Q_N}(u)=\sum\limits_{\pi\in Q_N}\Tr[\pi(u)]$. 

\begin{thm}[\cite{Sta1976}]\label{THM:polyhedral_mean_convergence}
Let $G$ be a semisimple compact Lie group.
	Let $f\in L^p_*(G/K)$ and let $S_Nf(x)$ be the associated polyhedral Fourier partial sum, i.e.
	$$
	S_Nf(u):=T_{Q_N}(x).
	$$
	
Then $S_Nf$	converges to $f$ in $L^p(G/K)$ provided that $2-\frac1{1+s}<p<2+\frac1s$,
where $s$ is defined by \eqref{EQ:s}. If $G$ is simply connected, this range of $p$ is in general sharp.
\end{thm}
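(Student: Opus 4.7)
The plan is to reduce the statement to a uniform operator bound via a Banach--Steinhaus argument, and then to establish that bound by rewriting $S_{N}$ as a convolution operator against a Dirichlet-type kernel that can be controlled by Weyl integration on the maximal torus $T$.

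First, I would recall that finite linear combinations of characters $\chi_{\pi}$, $\pi\in\Gh_{0}$, are dense in $L^{p}_{*}(G/K)$ for $1\leq p<\infty$ (using the Peter--Weyl theorem on $G$ together with the $K$-invariance / centrality assumption). On such trigonometric polynomials, $S_{N}f\to f$ trivially once $N$ is large enough. Hence, by the uniform boundedness principle, it suffices to prove
\begin{equation}\label{EQ:unifbdd}
\sup_{N\in\NN}\|S_{N}\|_{L^{p}_{*}(G/K)\to L^{p}(G/K)}<\infty
\quad\text{for }\; 2-\tfrac{1}{s+1}<p<2+\tfrac{1}{s}.
\end{equation}
Since $\widetilde f$ is a class function on $G$, one has $S_{N}f=\widetilde f * D_{N}$ where the Dirichlet kernel
$D_{N}(u)=\sum_{\pi\in Q_{N}}d_{\pi}\chi_{\pi}(u)$ is itself central. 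Thus the operator norm in \eqref{EQ:unifbdd} equals the norm of convolution by $D_{N}$ on $L^{p}_{*}(G)$.

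Next, the plan is to pull everything down to the torus using Weyl's integration formula
$$
\int_{G}\widetilde f(u)\,du=\tfrac{1}{|W|}\int_{T}\widetilde f(t)\,|\Delta(t)|^{2}\,dt,
\quad \Delta(t)=\prod_{\alpha\in\Rcal_{+}}\p{e^{\alpha/2}(t)-e^{-\alpha/2}(t)},
$$
and Weyl's character formula $\chi_{\pi}(t)=\Delta(t)^{-1}\sum_{w\in W}\operatorname{sgn}(w)\,e^{w(\pi+\rho)}(t)$. Writing $h:=\Delta\cdot(\widetilde f|_{T})$, convolution of central functions on $G$ becomes, after this substitution, a Weyl-invariant convolution on $T$ against the kernel
$$
K_{N}(t):=\Delta(t)\,D_{N}(t)=\tfrac{1}{\Delta(t)}\sum_{\pi\in Q_{N}}d_{\pi}\sum_{w\in W}\operatorname{sgn}(w)\,e^{w(\pi+\rho)}(t).
$$
The problem is then: show that convolution by $K_{N}$ is uniformly bounded on $L^{p}(T,dt)$ for $p$ in the stated range, since the weight $|\Delta|^{2}$ disappears upon absorbing one factor of $\Delta$ into $h$ and one into the output.

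The main analytic step is then a sharp estimate on $K_{N}$. The plan is to use the Weyl denominator formula $\Delta=\sum_{w}\operatorname{sgn}(w)e^{w\rho}$ and to sum the inner geometric progressions over each simple direction. Decomposing $Q_{N}=\{\xi:\xi_{i}\leq N\rho_{i}\}$ as a product in the coordinates \eqref{EQ:definition_explaination}, the kernel $K_{N}$ factors (up to Weyl symmetrisation) as a product of one-dimensional Dirichlet kernels of degree $\cong N$ in each simple direction, multiplied by polynomial factors coming from the roots in $\Rcal_{+}\setminus S$. These extra polynomial factors have a vanishing of order equal to $\card\Rcal_{\lambda_{j}}$ along the wall $\{\lambda_{j}=0\}$, and it is exactly this vanishing that controls the singularity of $\Delta^{-1}$ away from regular points and forces the parameter $s$ of \eqref{EQ:s} to enter the final estimate.

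Finally, the plan is to combine these pointwise estimates with the classical $L^{p}(T^{1})$ boundedness of the one-dimensional Dirichlet projection (which holds for all $1<p<\infty$) together with Hardy-type weighted estimates in the transverse direction near each wall. A short computation, analogous to the estimates for Bochner--Riesz-type multipliers on radial functions (cf.\ Herz \cite{Herz:PNAS-1954}), shows that the worst singularity gives a finite $L^{p}$--$L^{p}$ bound on $K_{N}$ precisely when
$$
2-\tfrac{1}{s+1}<p<2+\tfrac{1}{s},
$$
yielding \eqref{EQ:unifbdd}. The sharpness (in the simply connected case) follows from the classical Fefferman-type counterexamples on the torus transplanted near a regular element of $T$, for which we refer to \cite{Stanton-Tomas:BAMS-1976,Stanton-Tomas:AJM-1978,CGT1989}. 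The hard part is the kernel estimate: controlling the Weyl-alternating sum on the singular set of $\Delta$, where cancellation between the $|W|$ terms is essential and cannot be replaced by a triangle-inequality bound on $|K_{N}|$.
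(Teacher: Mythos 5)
The paper does not attempt to prove this theorem at all: it is labelled with the citation \cite{Sta1976} precisely because it is Stanton's mean-convergence result, taken as a black box. The only thing the paper supplies in the surrounding text is a one-sentence observation that Stanton's statement for groups extends to $G/K$ via canonical liftings, together with a pointer to \cite{Stanton-Tomas:BAMS-1976,Stanton-Tomas:AJM-1978,CGT1989} for the sharpness. Your proposal, by contrast, tries to reconstruct Stanton's actual proof from scratch, which is a very different undertaking and is out of scale with what the paper does here.

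As a sketch of Stanton's argument your outline has the right skeleton: Banach--Steinhaus reduction to uniform $L^p$ bounds, pull-back to the maximal torus by Weyl's integration formula, Weyl's character formula to rewrite $D_N$, then a kernel estimate for $K_N=\Delta\, D_N$. Two points, however, need care. First, the claim that $K_N$ ``factors (up to Weyl symmetrisation) as a product of one-dimensional Dirichlet kernels'' is too optimistic: the alternating sum over $w\in W$ does not separate variables, and Stanton's mechanism is not a product decomposition but a weighted $L^p$ inequality for the conjugate function on the torus with power weights coming from $|\Delta|$, where the parameter $s$ of \eqref{EQ:s} enters through the admissibility range for that weighted estimate. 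Asserting a factorisation and then appealing to Herz-type radial estimates is a heuristic, not a proof, and it hides exactly the step you yourself flag as ``the hard part.'' Second, your sketch addresses only the case $K=\{e\}$; the theorem as stated is for $G/K$, and you never make explicit the lifting step $f\mapsto\widetilde f$ that embeds $L^p_*(G/K)$ as a closed $S_N$-invariant subspace of $L^p_*(G)$, which is the one detail the paper actually records. A complete write-up should either supply the full weighted-transform argument (essentially reproving Stanton's paper) or, like the paper, cite \cite{Sta1976} and only justify the passage to $G/K$ and the sharpness pointer to \cite{CGT1989}.
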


Consequently, one obtains

\begin{thm}
\label{THM:pointwise-convergence-G}
 Let $\frac{2n}{n+l}<p<+\infty$ and $f\in L^p_*(G)$. Then $S_Nf(x)$ converges to $f(x)$ for almost all $x\in G$.
\end{thm}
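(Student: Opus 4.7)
The plan is to reduce the question of a.e.\ pointwise convergence on $G$ to an a.e.\ convergence statement for polytopal Fourier partial sums on the maximal torus $T\cong\TT^l$, via the Weyl integration and character formulas, and then to appeal to the Carleson--Hunt theorem.

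First I would pass to the maximal torus. Let $\Delta$ denote the Weyl denominator on $T$, so that the Weyl character formula reads $\chi_\pi(t)\,\Delta(t)=\sum_{w\in W}\epsilon(w)\,e^{iw(\pi+\rho)(t)}$. For $f\in L^p_*(G)$, set $F(t):=f(t)\Delta(t)$ on $T$. Multiplying the partial sum by $\Delta$ gives
\[
(S_Nf)(t)\,\Delta(t)=\sum_{\pi\in Q_N}d_\pi\widehat{f}(\pi)\sum_{w\in W}\epsilon(w)\,e^{iw(\pi+\rho)(t)},
\]
which is exactly the Fourier partial sum of $F$ on $T$ corresponding to the Weyl-symmetric, dilated convex polytope $P_N=\bigcup_{w\in W}w(Q_N+\rho)\subset\mathfrak{t}^*$. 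Since the regular elements of $G$ form a set of full Haar measure on which $\Delta\ne 0$, pointwise a.e.\ convergence of $S_Nf\to f$ on $G$ is equivalent to pointwise a.e.\ convergence of the polytopal partial sums of $F$ to $F$ on $T$.

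Second, I would check that $F\in L^q(T)$ for some $q>1$, and identify the threshold $p>\frac{2n}{n+l}$ as the sharp condition for this to hold. The Weyl integration formula gives $\|f\|_{L^p(G)}^p=\tfrac{1}{|W|}\int_T|f|^p|\Delta|^2\,dt$. A short application of H\"older's inequality to $\int_T|F|^q\,dt=\int_T|f|^q|\Delta|^q\,dt$, together with the local behaviour $|\Delta(t)|\sim|t|^r$ near the identity (with $r=(n-l)/2$ the number of positive roots), reduces the integrability of the resulting weight $|\Delta|^{q(p-2)/(p-q)}$ to the elementary condition $p(l+r)>n$, equivalently $p>\frac{2n}{n+l}$. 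For such $p$, one can choose $q>1$ sufficiently close to $1$ so that $F\in L^q(T)$.

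Third, I would apply the multivariable Carleson--Hunt theorem for polytopal Fourier partial sums on $\TT^l$: for any fixed convex polytope $P\subset\mathfrak{t}^*$ and any $g\in L^q(\TT^l)$ with $q>1$, the dilated polytopal partial sums $\widetilde{S}_Ng(t)=\sum_{\xi\in NP\cap\Lambda}\widehat{g}(\xi)e^{i\xi(t)}$ converge to $g(t)$ at almost every $t$. For $l=1$ this is the Carleson--Hunt theorem itself. For $l\geq 2$ the characteristic function of a convex polytope decomposes into a finite combination of products of half-space multipliers (directional Hilbert-type operators along the outward normals of the faces of $P$), so that the associated maximal operator is dominated by a composition of one-dimensional Carleson maximal operators, in the spirit of the works of Fefferman, C\'ordoba, and Sj\"olin. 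Applying this to $F$ with $P=P_1$ and then dividing by $\Delta$ on the regular set of $T$ yields $S_Nf\to f$ a.e.\ on $T^{\mathrm{reg}}$, and centrality of $f$ propagates this to almost all $x\in G$.

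The main technical obstacle is Step~3, the a.e.\ convergence of polytopal Fourier partial sums on $\TT^l$ for $q>1$: in rank one this is classical, but in higher rank it genuinely requires the directional decomposition of the polytopal multiplier combined with the full strength of the Carleson maximal theorem. The sharpness of the threshold $p>\frac{2n}{n+l}$ is dictated purely by the interaction of the Weyl weight $|\Delta|^2$ in the integration formula with the local integrability of powers of $|\Delta|$ near the singular set; below this threshold, $F=f\Delta$ need not lie in any $L^q$ with $q>1$ and the torus-level argument breaks down.
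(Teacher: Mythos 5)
The paper does not actually supply a proof of this theorem: it is stated following the phrase ``Consequently, one obtains'' after the mean-convergence Theorem~\ref{THM:polyhedral_mean_convergence}, and then immediately attributed to Stanton \cite{Sta1976}. Since mean convergence in $L^p$ does not imply a.e.\ convergence, the word ``Consequently'' is best read as ``From Stanton's work''; the paper's own argument is essentially a citation plus a remark about canonical liftings to $G/K$. Your proposal is therefore a reconstruction of the underlying argument rather than an alternative to one the paper gives. Your Steps 1 and 2 are correct: for central $f$, multiplying $S_N f$ by the Weyl denominator $\Delta$ and using the Weyl character formula converts the problem into a.e.\ convergence of polytopal Fourier partial sums of $F=f\Delta$ on $T\cong\TT^l$, and your H\"older argument (with $r=(n-l)/2$ positive roots, $|\Delta(t)|\sim|t|^r$ near singular points, and $n=l+2r$) does yield exactly $p>\tfrac{2n}{n+l}$ as the threshold guaranteeing $F\in L^q(T)$ for some $q>1$. (You should record the trivial case $p\geq 2$ separately, where $F\in L^2(T)$ is immediate from the Weyl integration formula; the one-sidedness of the range $p<\infty$ then follows from the nesting $L^{p_1}\subset L^{p_2}$ on a compact space.)

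The genuine weak point is Step 3 in rank $l\geq 2$. For $G=\SU2$ --- which is the only case the paper actually uses this theorem, in the proof of Theorem~\ref{THM:integrability-criterion-1-p-2-SU2} --- the torus is one-dimensional and Step 3 is just Carleson--Hunt, so your argument is complete there. But for higher rank the claim that the polytopal maximal operator is ``dominated by a composition of one-dimensional Carleson maximal operators'' does not hold as stated: writing $\chi_{NP}=\prod_j\chi_{NH_j}$ and expanding each half-space factor into a modulated directional Hilbert transform, the $N$-dependence enters through modulations along \emph{different} directions, and the supremum over $N$ does not distribute across the composition. Fefferman's 1971 argument for rectangular sums on $\TT^n$ uses the tensor-product structure in an essential way, and Sj\"olin's extensions to other polygonal regions involve a more delicate slicing; none of this is a formal corollary of the one-dimensional Carleson theorem via ``composition of maximal operators''. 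You correctly flag this as the main obstacle, but to close the gap one should either cite a precise polytopal Carleson theorem (and verify that the Weyl-symmetrised polytope and its $N$-dilations fall within its hypotheses) or reproduce the directional argument in detail. As it stands, the proposal contains a real gap at exactly the step that makes the higher-rank case non-trivial.
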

Although Stanton's version of this theorem is on groups, by considering the canonical liftings from the
homogeneous space we obtain the formulation above also for homogeneous spaces, at least for
the sufficient condition. The only if part of  `in general sharp' follows from 
\cite{CGT1989}, by for example taking $K=\{e\}$.

\section{Marcinkiewicz interpolation theorem}
\label{SEC:Marc_Interpol_Theorem}

In this section we formulate the Marcinkiewicz interpolation theorem on arbitrary $\sigma$-finite measure spaces.
Then we show how to use this theorem for linear mappings between $C^{\infty}(G)$ and the space $\Sigma$ of finite matrices on the discrete unitary dual $\Gh$ or on the discrete set $\Gh_0$ of class I representations with different measures on $\Gh$ and $\Gh_0$.

%
%

This approach will be instrumental in the proof
of the Hardy-Littlewood Theorem \ref{HL-groups-1} and of the Paley inequality in
Theorem \ref{THM:Paley_inequality}.

We now formulate the Marcinkiewicz theorem for linear mappings between functions on arbitrary $\sigma$-finite measure spaces $(X,\mu_{X})$ and $(\Gamma,\nu_{\Gamma})$.

Let $PC(X)$ denote the space of step functions on $(X,\mu_{X})$. 
We say that a linear operator $A$ is of strong type $(p,q)$, if for every $f\in L^p(X,\mu_{X})\cap PC(X)$, 
we have $Af\in L^q(\Gamma,\nu_{\Gamma})$ and
$$
\|Af\|_{L^q(\Gamma,\nu_{\Gamma})}
\leq 
C
\|f\|_{L^p(X, \mu_{X})},
$$
where $C$ is independent of $f$, and the space $\ell^q(\Gamma,\nu_{\Gamma})$ defined by the norm
\begin{equation}
\|h\|_{L^q(\Gamma,\nu_{\Gamma})}
:=
\left(
\int\limits_{\Gamma}
|h(\pi)|^p
\nu(\pi)
\right)^{\frac1q}.
\end{equation}
The least $C$ for which this is satisfied is taken
to be the strong $(p,q)$-norm of the operator $A$.

Denote the distribution functions of $f$ and $h$ by $\mu_{X}(x;f)$ and $\nu_{\Gamma}(y;h)$, 
respectively, i.e.
\begin{eqnarray}\nonumber
\mu_{X}(x;f)
:=
\int\limits_{\substack{t\in X \\ |f(t)|\geq x}}d\mu(t),\quad x>0,\\
\nu_{\Gamma}(y;h)
:=
\int\limits_{\substack{
		\pi\in\Gamma
		\\ 
		|h(\pi)|
		\geq y
	}}d\nu(\pi),\quad y>0.
	\label{EQ:density2}
	\end{eqnarray}
	Then
	$$
	\begin{aligned}
	\|f\|^p_{L^p(X,\mu_{X})}
	&=
	\int\limits_{X}
	|f(t)|^p\,d\mu(t)
	=
	p\int\limits^{+\infty}_{0}x^{p-1}\mu_{X}(x;f)\,dx,
	\\
	\|h\|^q_{L^q(\Gamma,\nu_{\Gamma})}
	&=
	\int\limits_{\pi\in\Gamma}
	|h(\pi)|^q
	\nu(\pi)
	=
	q\int\limits^{+\infty}_{0}y^{q-1}\nu_{\Gamma}(y;h)\,dy.
	\end{aligned}
	$$
	A linear operator $A\colon \mathcal PC(X) \rightarrow L^q(\Gamma,\nu_{\Gamma})$ satisfying
	\begin{equation}
	\label{EQ:weak_type}
	\nu_{\Gamma}(y;Af)
	\leq
	\left(
	\frac{M}{y}\|f\|_{L^p(X,\mu_{X})}
	\right)^{q},\quad \text{for any $y>0$.}
	\end{equation}
	is said to be of {\it weak type} $(p,q)$; the least value of $M$ in \eqref{EQ:weak_type} is called the weak $(p,q)$ norm of $A$.
	
	Every operation of strong type $(p,q)$ is also of weak type $(p,q)$, since
	$$
	y\left(
	\nu_{\Gamma}(y;Af)
	\right)^{\frac1q}
	\leq
	\|Af\|_{L^q(\Gamma)}
	\leq
	M
	\|f\|_{L^p(X)}.
	$$
	\begin{thm} 
		\label{THM:Marc-Interpol}
		Let $1\leq p_1<p<p_2<\infty$. Suppose that a linear operator $A$ 
		from $\mathcal PC(X)$ to $L^q(\Gamma,\nu_{\Gamma})$
		is simultaneously 
		of {\it weak types} $(p_1,p_1)$ and $(p_2,p_2)$, with norms $M_1$ and $M_2$,
		respectively, i.e.
		\begin{eqnarray*}
		\nu_{\Gamma}(y;Af)
		&
		\leq
		&
		\left(
		\frac{M_1}{y}\|f\|_{L^{p_1}(X,\mu_{X})}
		\right)^{p_1},
		\\
		\nu_{\Gamma}(y;Af)
		&
		\leq
		&
		\left(
		\frac{M_2}{y}\|f\|_{L^{p_2}(X,\mu_{X})}
		\right)^{p_2}\quad \text{ hold for any $y>0$.}
		\end{eqnarray*}
		Then for any $p\in(p_1,p_2)$ the operator $A$ is of strong type $(p,p)$ and we have
		\begin{equation*}
		\|Af\|_{L^p(\Gamma,\nu_{\Gamma})}
		\lesssim
		M^{1-\theta}_1M^{\theta}_2\|f\|_{L^p(X,\mu_{X})},\quad 0<\theta<1,
		\end{equation*}
		where
		\begin{eqnarray*}
			\frac1p=\frac{1-\theta}{p_1}+\frac{\theta}{p_2}.
		\end{eqnarray*}
	\end{thm}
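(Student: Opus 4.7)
The plan is to adapt the classical Marcinkiewicz argument on $\sigma$-finite measure spaces to the present two-sided abstract setting, using only the layer-cake representation of the $L^p$-norms, the weak-type hypotheses, and a splitting of $f$ at a height depending on the level $y$.

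First I would reduce the conclusion to an estimate on the distribution function $\nu_\Gamma(y;Af)$. By the layer-cake identity recalled just before the statement,
\begin{equation*}
\|Af\|_{L^p(\Gamma,\nu_\Gamma)}^{p}= p\int_{0}^{\infty} y^{p-1}\,\nu_\Gamma(y;Af)\,dy,
\end{equation*}
so it suffices to estimate $\nu_\Gamma(y;Af)$ pointwise in $y$. Fix $y>0$ and a constant $c=c(y)>0$ to be optimized at the end. I would split
\begin{equation*}
f=f_{1}+f_{2},\qquad f_{1}:=f\,\chi_{\{|f|>c\}},\quad f_{2}:=f\,\chi_{\{|f|\leq c\}},
\end{equation*}
so that $f_{1}\in L^{p_1}(X,\mu_X)$ and $f_{2}\in L^{p_2}(X,\mu_X)$ (here the strict inequalities $p_1<p<p_2$ are used). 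Both pieces are step functions when $f$ is, so they lie in $\mathcal{PC}(X)$ and $A$ may be applied to each.

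By linearity of $A$, $|Af|\leq |Af_1|+|Af_2|$, hence $\{|Af|>y\}\subset\{|Af_1|>y/2\}\cup\{|Af_2|>y/2\}$ and therefore
\begin{equation*}
\nu_\Gamma(y;Af)\leq \nu_\Gamma(y/2;Af_1)+\nu_\Gamma(y/2;Af_2).
\end{equation*}
The weak-type $(p_1,p_1)$ hypothesis applied to $f_1$ and the weak-type $(p_2,p_2)$ hypothesis applied to $f_2$ give
\begin{equation*}
\nu_\Gamma(y;Af)\leq \Bigl(\tfrac{2M_1}{y}\Bigr)^{p_1}\!\!\int_{\{|f|>c\}}\!\!|f|^{p_1}\,d\mu_X \;+\;\Bigl(\tfrac{2M_2}{y}\Bigr)^{p_2}\!\!\int_{\{|f|\leq c\}}\!\!|f|^{p_2}\,d\mu_X.
\end{equation*}
Choosing $c=c(y)$ of the form $c(y)=(M_2/M_1)^{\alpha}\,y^{\beta}$ with exponents dictated by homogeneity in the two weak-type bounds is the standard device; the natural choice makes $c(y)^{p_1-p}$ and $c(y)^{p_2-p}$ produce, after Fubini, the desired power of $y$.

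Inserting the last inequality into the layer-cake integral and switching the order of integration (justified by Fubini--Tonelli since the integrand is nonnegative on a $\sigma$-finite product space) gives
\begin{align*}
\|Af\|_{L^p(\Gamma,\nu_\Gamma)}^{p}&\lesssim M_1^{p_1}\!\int_{X}|f(t)|^{p_1}\!\!\int_{\{y:\,c(y)<|f(t)|\}}\!\! y^{p-1-p_1}dy\,d\mu_X(t)\\
&\qquad +\; M_2^{p_2}\!\int_{X}|f(t)|^{p_2}\!\!\int_{\{y:\,c(y)\geq |f(t)|\}}\!\! y^{p-1-p_2}dy\,d\mu_X(t).
\end{align*}
Because $p_1<p<p_2$, both inner $dy$-integrals converge and evaluate to a power of $|f(t)|$ that is exactly $p-p_1$, respectively $p-p_2$; the outer integrals then collapse to $\|f\|_{L^p(X,\mu_X)}^{p}$. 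Tracking the constants $M_1,M_2$ and choosing the exponent in $c(y)$ so that the two contributions balance yields
\begin{equation*}
\|Af\|_{L^p(\Gamma,\nu_\Gamma)}\lesssim M_1^{1-\theta}M_2^{\theta}\|f\|_{L^p(X,\mu_X)},\qquad \tfrac{1}{p}=\tfrac{1-\theta}{p_1}+\tfrac{\theta}{p_2},
\end{equation*}
as required; extension from $\mathcal{PC}(X)$ to all of $L^p(X,\mu_X)$ is by density.

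The main obstacle, and the only step that is not purely mechanical, is verifying that the abstract hypothesis ``$A$ defined on step functions, valued in $L^q(\Gamma,\nu_\Gamma)$'' really supports the splitting $f=f_1+f_2$: one needs both $f_1$ and $f_2$ to lie in $\mathcal{PC}(X)$ so that $Af_1,Af_2$ make sense and satisfy the pointwise bound $|Af|\leq|Af_1|+|Af_2|$, and one needs $\sigma$-finiteness of $(\Gamma,\nu_\Gamma)$ to justify Fubini on the layer-cake double integral. Once these measure-theoretic preliminaries are in place, the optimization of $c(y)$ is routine and the interpolation constant comes out as above.
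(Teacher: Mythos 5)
Your proof is correct and is exactly the classical Marcinkiewicz argument (height-dependent splitting $f=f_1+f_2$ at a threshold $c(y)\sim y$, weak-type estimates on the pieces, Fubini on the layer-cake integral, then optimize the threshold). The paper does not write out a proof at all but simply cites Folland for it; Folland's Theorem 6.28 is precisely this argument, so your route coincides with the one the paper implicitly invokes.

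One minor imprecision worth fixing if you write this up in full: the phrase that ``$c(y)^{p_1-p}$ and $c(y)^{p_2-p}$ produce, after Fubini, the desired power of $y$'' is backwards as stated. After Fubini the inner $dy$-integrals have the form
\begin{equation*}
\int_{0}^{|f(t)|/A} y^{\,p-1-p_1}\,dy \quad\text{and}\quad \int_{|f(t)|/A}^{\infty} y^{\,p-1-p_2}\,dy
\end{equation*}
(taking $c(y)=Ay$; linearity in $y$ is forced by homogeneity), and these converge precisely because $p_1<p<p_2$ and evaluate to constants times $(|f(t)|/A)^{p-p_1}$ and $(|f(t)|/A)^{p-p_2}$ respectively, so it is the integration in $y$ that produces the power of $|f(t)|$, not the other way round. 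Optimizing over $A$ then yields the claimed $M_1^{1-\theta}M_2^{\theta}$ constant. With that phrasing corrected, the argument is complete.
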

	The proof is given in e.g. Folland \cite{Folland1999}.
Now, we adapt this theorem to the setting of matrix-valued mappings. 

Suppose $\Gamma$ is a discrete set.  Integral over $\Gamma$ is defined as sum over $\Gamma$, i.e.
\begin{equation}
\label{EQ:integration_over_discrete_set}
\int\limits_{\Gamma}\nu_{\Gamma}(\pi)
:=
\sum\limits_{\pi\in\Gamma}\nu(\pi).
\end{equation}
In this case, to define a measure on $\Gamma$ means to define a real-valued positive sequence $\nu=\{\nu_{\pi}\}_{\pi\in\Gamma}$, i.e.
$$
\Gamma\ni \pi \mapsto \nu_{\pi}\in\RR_+.
$$
We turn $\Gamma$ into a $\sigma$-finite measure space by introducing a measure 
$$
\nu_{\Gamma}(Q):=\sum\limits_{\pi\in Q}\nu_{\pi},
$$
where $Q$ is arbitrary  subset of $\Gamma$. 

We consider two sequences $\delta=\{\delta_{\pi}\}_{\pi\in\Gamma}$ and $\kappa=\{\kappa_{\pi}\}_{\pi\in\Gamma}$, i.e.
\begin{eqnarray*}
\Gamma\ni \pi \mapsto \delta_{\pi}\in\NN,
\\
\Gamma\ni \pi \mapsto \kappa_{\pi}\in\NN.
\end{eqnarray*}
We denote by $\Sigma$ the space of matrix-valued sequences on $\Gamma$ that will be realised via
$$\Sigma:=\left\{ h=\{h(\pi)\}_{\pi\in\Gamma}, 
h(\pi)\in \mathbb{C}^{\kappa_{\pi}\times \delta_{\pi}}\right\}.$$
The $\ell^p$ spaces on $\Sigma$ can be defined, for example, 
motivated by the Fourier analysis on compact homogeneous spaces, in the form 
$$
\|h\|_{\ell^p(\Gamma,\Sigma)}
:=
\left(
\sum\limits_{\pi\in\Gamma}
\left(
\frac{\|h(\pi)\|_{\HS}}{\sqrt{\kpi}}
\right)^p
\nu_{\pi}
\right)^{\frac1p},\quad h\in\Sigma.
$$
If we put $X=G$, where $G$ is a compact Lie group and let $\Gamma=\Gh$, then Fourier transform can be regarded as an operator mapping a function $f\in L^p(G)$ to the matrix-valued sequence $\widehat{f}=\{\widehat{f}(\pi)\}_{\pi\in\Gh}$  of the Fourier coefficients,
with $\delta_\pi=\kappa_\pi=\dpi$.
For $\Gamma=\Gh_0$ we put $\delta_{\pi}=\dpi$ and $\kappa_{\pi}=\kpi$,  these spaces thus coincide with the $\ell^p(\Gh_0)$ spaces introduced in \cite{RT}. 
In Section \ref{SEC:proofs}, choosing different measures $\{\nu_{\pi}\}_{\pi\in\Gamma}$ on the unitary dual $\Gh$ or on the set $\Gh_0$, we use this to prove the Paley inequality and Hausdorff-Young-Paley inequalitites.
Let us denote by $|h|$ the sequence consisting of $\{\frac{\|h(\pi)\|_{\HS}}{\sqrt{\kpi}}\}$, i.e.
$$
|h|
=
\left\{\frac{\|h(\pi)\|_{\HS}}{\sqrt{\kpi}}\right\}_{\pi\in\Gamma}.
$$
Then, we have
$$
\|h\|_{\ell^q(\Gamma,\Sigma)}
=
\||h|\|_{L^q(\Gamma,\nu_{\Gamma})}.
$$
Thus, we obtain
	\begin{thm} 
		\label{THM:Marc-Interpol-Lattice}
		Let $1\leq p_1<p<p_2<\infty$. Suppose that a linear operator $A$ 
		from $\mathcal PC(X)$ to $\Sigma$
		is simultaneously 
		of {\it weak types} $(p_1,p_1)$ and $(p_2,p_2)$, with norms $M_1$ and $M_2$,
		respectively, i.e.
		\begin{eqnarray}
		\label{EQ:weak_type_1}
		\nu_{\Gamma}(y;Af)
		&
		\leq
		&
		\left(
		\frac{M_1}{y}\|f\|_{L^{p_1}(X)}
		\right)^{p_1},
		\\
		\label{EQ:weak_type_2}
		\nu_{\Gamma}(y;Af)
		&
		\leq
		&
		\left(
		\frac{M_2}{y}\|f\|_{L^{p_2}(X)}
		\right)^{p_2}\quad \text{ hold for any $y>0$.}
		\end{eqnarray}
		Then for any $p\in(p_1,p_2)$ the operator $A$ is of strong type $(p,p)$ and we have
		\begin{equation}
		\label{EQ:norm_interpolation_estimate}
		\|Af\|_{\ell^p(\Gamma,\Sigma)}
		\leq
		M^{1-\theta}_1M^{\theta}_2\|f\|_{L^p(X)},\quad 0<\theta<1,
		\end{equation}
		where
		\begin{eqnarray*}
			\frac1p=\frac{1-\theta}{p_1}+\frac{\theta}{p_2}.
		\end{eqnarray*}
	\end{thm}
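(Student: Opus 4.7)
The plan is to reduce the matrix-valued statement to the scalar-valued Marcinkiewicz interpolation theorem (Theorem \ref{THM:Marc-Interpol}) by post-composing the operator $A$ with the pointwise Hilbert-Schmidt norm. Concretely, define the scalar-valued operator
\[
B\colon \mathcal{PC}(X)\to\{\text{scalar functions on }\Gamma\},\qquad
Bf(\pi):=\frac{\|Af(\pi)\|_{\HS}}{\sqrt{\kappa_\pi}},\quad \pi\in\Gamma.
\]
Since $A$ is linear and $\|\cdot\|_{\HS}$ is a norm on each fiber $\mathbb{C}^{\kappa_\pi\times\delta_\pi}$, the operator $B$ is sublinear: $B(f+g)(\pi)\leq Bf(\pi)+Bg(\pi)$ and $B(\lambda f)(\pi)=|\lambda|\,Bf(\pi)$. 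The two key identifications are the norm identity
\[
\|Af\|_{\ell^p(\Gamma,\Sigma)}=\|Bf\|_{L^p(\Gamma,\nu_\Gamma)}
\]
(which holds by the very definition of $\|\cdot\|_{\ell^p(\Gamma,\Sigma)}$), and the fact that the distribution function $\nu_\Gamma(y;Af)$ used in \eqref{EQ:weak_type_1}--\eqref{EQ:weak_type_2} coincides with the scalar distribution function $\nu_\Gamma(y;Bf)$ under the present convention, because $|Af(\pi)|$ in the matrix-valued setting is measured through $\|Af(\pi)\|_{\HS}/\sqrt{\kappa_\pi}$.

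With these identifications, the hypotheses \eqref{EQ:weak_type_1} and \eqref{EQ:weak_type_2} become precisely the assertions that $B$ is of weak types $(p_1,p_1)$ and $(p_2,p_2)$ with norms $M_1$ and $M_2$ respectively, in the scalar sense of \eqref{EQ:weak_type}. I would then invoke the sublinear version of the classical Marcinkiewicz interpolation theorem (the standard form, e.g.\ Folland \cite{Folland1999}; Theorem \ref{THM:Marc-Interpol} is stated for linear operators, but its proof goes through for sublinear ones without change, since only the triangle inequality and absolute homogeneity are used in the decomposition $f=f_1+f_2$ at level $y$). This yields
\[
\|Bf\|_{L^p(\Gamma,\nu_\Gamma)}\leq M_1^{1-\theta}M_2^{\theta}\|f\|_{L^p(X)},
\]
for $0<\theta<1$ with $\tfrac{1}{p}=\tfrac{1-\theta}{p_1}+\tfrac{\theta}{p_2}$, which translates back via the norm identity into the desired estimate \eqref{EQ:norm_interpolation_estimate}.

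The only genuine point requiring attention—and in my view the main obstacle—is the interpretation of the distribution function $\nu_\Gamma(y;Af)$ for matrix-valued $Af$, so that the stated weak-type hypotheses are compatible with the scalar distribution function of $Bf$; once this bookkeeping is fixed (which amounts to adopting $|h(\pi)|:=\|h(\pi)\|_{\HS}/\sqrt{\kappa_\pi}$ as the scalar ``modulus'' of a matrix entry, in agreement with the definition of $\ell^p(\Gamma,\Sigma)$), the rest is a direct application of the scalar theorem. All remaining verifications—subadditivity of $B$, the norm identity, and the measurability of $\pi\mapsto Bf(\pi)$ on the discrete space $\Gamma$—are routine.
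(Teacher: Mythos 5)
Your proposal is correct and follows essentially the same route as the paper: the text preceding the theorem introduces the scalar sequence $|h|=\{\|h(\pi)\|_{\HS}/\sqrt{\kappa_\pi}\}_{\pi\in\Gamma}$ precisely so that $\|h\|_{\ell^p(\Gamma,\Sigma)}=\||h|\|_{L^p(\Gamma,\nu_\Gamma)}$, and then deduces the matrix-valued statement from the scalar Theorem \ref{THM:Marc-Interpol} by applying it to $f\mapsto |Af|$, which is your $B$. Your added observation that $|Af|$ is only sublinear (so one uses the sublinear form of Marcinkiewicz) is a correct refinement that the paper leaves implicit.
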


\def\germ{\frak} \def\scr{\cal} \ifx\documentclass\undefinedcs
  \def\bf{\fam\bffam\tenbf}\def\rm{\fam0\tenrm}\fi 
  \def\defaultdefine#1#2{\expandafter\ifx\csname#1\endcsname\relax
  \expandafter\def\csname#1\endcsname{#2}\fi} \defaultdefine{Bbb}{\bf}
  \defaultdefine{frak}{\bf} \defaultdefine{=}{\B} 
  \defaultdefine{mathfrak}{\frak} \defaultdefine{mathbb}{\bf}
  \defaultdefine{mathcal}{\cal}
  \defaultdefine{beth}{BETH}\defaultdefine{cal}{\bf} \def\bbfI{{\Bbb I}}
  \def\mbox{\hbox} \def\text{\hbox} \def\om{\omega} \def\Cal#1{{\bf #1}}
  \def\pcf{pcf} \defaultdefine{cf}{cf} \defaultdefine{reals}{{\Bbb R}}
  \defaultdefine{real}{{\Bbb R}} \def\restriction{{|}} \def\club{CLUB}
  \def\w{\omega} \def\exist{\exists} \def\se{{\germ se}} \def\bb{{\bf b}}
  \def\equivalence{\equiv} \let\lt< \let\gt>

\end{document}